\tikzset{3D/.style={x={(0.6cm,0.6cm)}, y={(-1.0cm,0.3cm)}, z={(0cm,1cm)}}, %
vertex/.style={circle, inner sep=1pt, fill}, %
missing vertex/.style={vertex, inner sep=2pt, fill=white, draw=black!30, thin}, %
edge/.style={semithick, line join=round, line cap=round}, %
missing edge/.style={edge, dashed}, %
sketch edge/.style={thin, densely dotted, draw opacity=0.6}, %
hyperplane/.style={edge, thin, dashed}, %
face/.style={fill=black!15, fill opacity=0.7, edge}, %
baseline={($ (current bounding box.west) - (0,1ex) $)} %
}
\renewcommand{\emptyset}{\varnothing}
\renewcommand{\epsilon}{\varepsilon}
\renewcommand{\tilde}{\widetilde}
\DeclarePairedDelimiter{\abs}{\lvert}{\rvert}
\newcommand{\Squelta}{{\mathord{{\tikz[baseline=0.5pt]{\fill [even odd rule] (0,0) [rounded corners=0.15pt] rectangle (0.68em,0.68em) (0.04em,0.07em) [sharp corners] rectangle (0.68em-0.09em,0.68em-0.04em); \useasboundingbox (-0.08em,0) rectangle (0.68em+0.08em,0.68em);}}}}}
\DeclareMathOperator{\skel}{Skel}
\DeclareMathOperator{\lk}{link}
\DeclareMathOperator{\slk}{s-link}
\DeclareMathOperator{\st}{star}
\DeclareMathOperator{\bary}{bary}
\DeclareMathOperator{\cross}{Cross}
\DeclareMathOperator{\cone}{Cone}
\DeclareMathOperator{\im}{im}
\DeclareMathOperator{\rank}{rank}
\theoremstyle{plain}
\newtheorem{thm}{Theorem}[section]
\newtheorem{lma}[thm]{Lemma}
\newtheorem{prop}[thm]{Proposition}
\newtheorem{crl}[thm]{Corollary}
\theoremstyle{definition}
\theoremstyle{remark}
\newtheorem{rmk}[thm]{Remark}
\Crefname{thm}{Theorem}{Theorems}
\Crefname{figure}{Figure}{Figures}
\title{Topology of complements of skeletons}
\author[1]{Rowan Rowlands}
\affil[1]{University of Washington, Seattle, WA 98195 US. Email: \url{rowanr@uw.edu}}
\date{}
\begin{document}

\maketitle

\begin{abstract}
Given a polytopal complex $X$, we examine the topological complement of its $k$-skeleton. We construct a long exact sequence relating the homologies of the skeleton complements and links of faces in $X$, and using this long exact sequence, we obtain characterisations of Cohen--Macaulay and Leray complexes, stacked balls, and neighbourly spheres in terms of their skeleton complements. We also apply these results to CAT(0) cubical complexes, and find new similarities between such a complex and an associated simplicial complex, the crossing complex.
\end{abstract}

\section{Introduction} \label{sec:introduction}

Polytopal complexes are important objects in topology and combinatorics, which include simplicial complexes, cubical complexes and polytopes. Their geometric realisations provide examples of a wide array of topological spaces, and much research has gone into studying the interplay between their combinatorial and topological aspects.

One important feature of a polytopal complex is its \emph{$k$-skeleton}, the set of faces of the complex of dimension less than or equal to $k$. Skeletons act as a framework which the high-dimensional faces are attached to, so studying the structure of a skeleton can reveal a lot about a complex. For example, \citet{art:Bayer} surveys many results about objects that can be reconstructed from their $k$-skeletons for certain values of $k$. Skeletons also play an important role in defining cellular homology, and in important topological results such as Poincar\'e duality.

In this paper, however, we aim to approach this topic from the other direction, starting from the higher-dimensional faces instead of the low-dimensional ones. We define the \emph{$k$th co-skeleton} of a complex to be the set of faces of dimension \emph{higher} than $k$. We discover that there is a strong relationship between co-skeletons and \emph{links}: a link of a face in a polytopal complex captures the local structure of the complex around that face, so in a sense, the co-skeletons give us a ``global'' summary of the ``local'' information of the complex.

\Cref{sec:preliminaries} of this paper sets out the basic definitions we will use. In \cref{sec:general}, we examine some initial facts about co-skeletons of arbitrary complexes, and their connections with various forms of duality (see \cref{thm:polytope-duality,thm:Alexander-duality-for-coskeletons}). In \cref{sec:LES}, we construct a long exact sequence relating the $k$th and $(k-1)$th co-skeletons and the links of $k$-dimensional faces (\cref{thm:LES}). In \cref{sec:families}, we use this long exact sequence to study some families of complexes defined by properties of links --- specifically, Cohen--Macaulay complexes, Leray complexes, and stacked balls --- and give characterisations of these families in terms of the homology of their co-skeletons (\cref{thm:CM-characterisation,thm:Leray-characterisation,thm:stacked-characterisation}). Finally, \cref{sec:CAT(0)} examines cubical complexes, particularly CAT(0) cubical complexes: we examine the ``crossing complex'' defined by \citet{art:Rowlands-CAT0}, and show that a CAT(0) cubical complex has one of the properties discussed in \cref{sec:families} if and only if its crossing complex shares that property (\cref{thm:CAT(0)-property-iff-crossing}).

\subsection*{Acknowledgments}

Many thanks to Isabella Novik for her multitudinous helpful suggestions on drafts. Thanks also to Martina Juhnke-Kubitzke for her insightful comments.

The author was partially supported by a graduate fellowship from NSF grant DMS-1953815.

\section{Preliminaries} \label{sec:preliminaries}

We begin with some definitions. Readers familiar with polytopal complexes and topology may skip most of this section, but beware that we give slightly non-standard definitions in a couple of places, specifically for geometric realisations and links.

\subsection{Polytopal complexes}

A \emph{polytopal complex} $X$ is a collection of polytopes in Euclidean space $\mathbb R^N$ with the following properties:
\begin{itemize}
	\item If $\sigma$ is in $X$ and $\tau$ is a face of $\sigma$, then $\tau$ is in $X$, and
	\item If $\sigma$ and $\sigma'$ are polytopes in $X$, then $\sigma \cap \sigma'$ is a face of each (possibly the empty face).
\end{itemize}
See \cref{fig:running-example} for a small example. In this paper, we will only consider finite polytopal complexes.

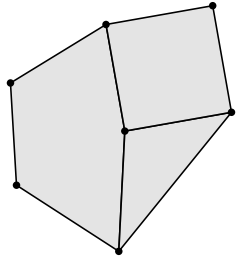
\begin{figure}
\centering
\begin{tikzpicture}[scale=0.8]

\node (v1) [coordinate] at (0,0) {};
\node (v2) [coordinate] at (100:1.8) {};
\node (v3) [coordinate] at (-1.9,0.8) {};
\node (v4) [coordinate] at (-1.8,-0.9) {};
\node (v5) [coordinate] at (-0.1,-2.0) {};
\node (v6) [coordinate] at (10:1.8) {};
\node (v7) [coordinate] at ($(v6) + (100:1.8)$) {};

\foreach \i/\j in {1/2, 2/3, 3/4, 4/5, 1/5, 1/6, 5/6, 2/7, 6/7}
	\node (v\i\j) [coordinate] at ($(v\i)!0.5!(v\j)$) {};
\node (v12345) [coordinate] at ($1/5*(v1)+1/5*(v2)+1/5*(v3)+1/5*(v4)+1/5*(v5)$) {};
\node (v156) [coordinate] at ($1/3*(v1)+1/3*(v5)+1/3*(v6)$) {};
\node (v1267) [coordinate] at ($1/4*(v1)+1/4*(v2)+1/4*(v6)+1/4*(v7)$) {};

\filldraw [face] (v1)--(v2)--(v3)--(v4)--(v5)--cycle;
\filldraw [face] (v1)--(v5)--(v6)--cycle;
\filldraw [face] (v1)--(v2)--(v7)--(v6)--cycle;

\foreach \i in {1,...,7}
	\node [vertex] at (v\i) {};

\end{tikzpicture}
\caption{An example of a polytopal complex} \label{fig:running-example}
\end{figure}

If every polytope in $X$ is a simplex, then we say that $\Delta \coloneqq X$ is a \emph{(geometric) simplicial complex}. If every polytope is a cube (that is, a polytope combinatorially equivalent to $[0,1]^i$ for some dimension $i \geq 0$), then $\Squelta \coloneqq X$ is a \emph{cubical complex}.

An \emph{abstract simplicial complex} $\Delta$ is a poset isomorphic to the poset of faces of a geometric simplicial complex, ordered by inclusion. Equivalently, it is a collection of subsets of some finite set, with the property that $\sigma \in \Delta$ and $\tau \subseteq \sigma$ implies $\tau \in \Delta$. The dimension of a face $\sigma$ is $\dim \sigma \coloneq \# \sigma - 1$.

In any of these types of complex, faces of dimension $0$ and $1$ are called \emph{vertices} and \emph{edges} respectively, and a maximal face (by inclusion) is called a \emph{facet}. If all facets of a complex have the same dimension, the complex is \emph{pure}. The dimension of the complex is the maximum dimension of its faces. The number of faces of dimension $k$ in a complex $X$ is denoted $f_k(X)$.

If $\sigma$ is a polytope, $\abs{\sigma}$ will denote its relative interior. If $S$ is a collection of faces in a polytopal complex, then their \emph{geometric realisation} $\abs{S}$ is the union of the relative interiors of the polytopes in $S$. This is slightly different from the usual definition of a geometric realisation --- if $\sigma$ is in $S$, we do not include the boundary of $\sigma$ in the geometric realisation $\abs{S}$ unless those boundary faces are also part of $S$, unlike other definitions. We will take care to distinguish between $S$ as a set of polytopes and $\abs{S}$ as a topological space.

Suppose $Y$ is a subset of a polytopal complex $X$. We write $X \setminus Y$ to denote the difference of sets, so $X \setminus Y$ is the set of faces of $X$ that are not in $Y$ (which is not generally a polytopal complex). If $Y$ is a polytopal subcomplex of $X$, then $X - Y$ will denote the polytopal complex consisting of all faces of $X$ that do not intersect any faces of $Y$. We will reserve ``$-$'' to denote this ``combinatorial'' deletion, and use ``$\setminus$'' to denote deletion of sets or topological spaces. Note that $\abs{X \setminus Y} = \abs{X} \setminus \abs{Y}$, but the space $\abs{X - Y}$ is not in general the same; see \cref{fig:deletion}, for an example. However, these spaces are sometimes related by the following lemma:
\begin{lma}[{\citep[Lemma~70.1]{book:Munkres-EAT}}] \label{thm:deletion-homotopy-equivalence}
If $\Delta$ is a simplicial complex and $\Lambda$ is an induced subcomplex (in other words, every face of $\Delta$ whose vertices are contained in the vertex set of $\Lambda$ is a face of $\Lambda$), then $\abs{\Delta - \Lambda}$ and $\abs{\Delta} \setminus \abs{\Lambda}$ are homotopy equivalent.
\end{lma}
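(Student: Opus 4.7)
The plan is to construct an explicit deformation retraction of $\abs{\Delta} \setminus \abs{\Lambda}$ onto $\abs{\Delta - \Lambda}$. Write $V_\Lambda$ for the vertex set of $\Lambda$, so that a face $\sigma$ of $\Delta$ lies in $\Delta - \Lambda$ iff its vertex set $V_\sigma$ is disjoint from $V_\Lambda$; and because $\Lambda$ is induced, $\sigma$ lies in $\Lambda$ iff $V_\sigma \subseteq V_\Lambda$. In particular, $\abs{\Delta - \Lambda}$ is already a subset of $\abs{\Delta} \setminus \abs{\Lambda}$.

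For any $p \in \abs{\Delta}$ lying in the relative interior of a unique face $\sigma$, let $t_v(p)$ denote the barycentric coordinate of $p$ at the vertex $v \in V_\sigma$, and set
\[
    \lambda(p) \coloneqq \sum_{v \in V_\sigma \cap V_\Lambda} t_v(p),
\]
so that $p \in \abs{\Lambda}$ iff $\lambda(p) = 1$ and $p \in \abs{\Delta - \Lambda}$ iff $\lambda(p) = 0$. For $p \in \abs{\Delta} \setminus \abs{\Lambda}$ we have $\lambda(p) < 1$, so I can define the retraction
\[
    r(p) \coloneqq \frac{1}{1-\lambda(p)} \sum_{v \in V_\sigma \setminus V_\Lambda} t_v(p)\, v,
\]
which is the renormalised projection of $p$ onto the face of $\sigma$ spanned by its vertices outside $V_\Lambda$. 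Since $\Delta$ is simplicial, this face is itself a face of $\Delta$, and its vertices are disjoint from $V_\Lambda$, so $r(p) \in \abs{\Delta - \Lambda}$. Continuity of $r$ follows from the fact that the barycentric coordinate functions $t_v$ are continuous on $\abs{\Delta}$.

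I would then take the straight-line homotopy $H(p,s) \coloneqq (1-s)p + s\, r(p)$, which lies in the same face $\sigma$ throughout. The barycentric coordinate of $H(p,s)$ at any $v \in V_\sigma \setminus V_\Lambda$ is strictly positive (it is at least $(1-s) t_v(p) + s t_v(p)/(1-\lambda(p))$), and the set $V_\sigma \setminus V_\Lambda$ is nonempty because otherwise $V_\sigma \subseteq V_\Lambda$ would force $\sigma \in \Lambda$ and hence $p \in \abs{\Lambda}$, contradicting our assumption. Thus the face of $\sigma$ containing $H(p,s)$ has at least one vertex outside $V_\Lambda$, and since $\Lambda$ is induced it cannot lie in $\Lambda$, so $H(p,s) \in \abs{\Delta} \setminus \abs{\Lambda}$. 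Because $H(p,0) = p$, $H(p,1) = r(p)$, and $H(p,s) = p$ whenever $p \in \abs{\Delta - \Lambda}$, this is a strong deformation retraction, yielding the desired homotopy equivalence.

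The main subtlety, and the only place where both hypotheses are genuinely used, is verifying that $r$ lands in $\abs{\Delta - \Lambda}$ and that the homotopy avoids $\abs{\Lambda}$: the simplicial hypothesis is needed so that every subset of vertices of $\sigma$ spans a face of $\Delta$ (which would fail for general polytopal complexes), and the induced hypothesis on $\Lambda$ is what converts the combinatorial condition ``$V_\sigma \not\subseteq V_\Lambda$'' into the topological condition ``$H(p,s) \notin \abs{\Lambda}$''.
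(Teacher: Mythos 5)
The paper cites this lemma from Munkres and does not reproduce a proof, so there is no in-paper argument to compare against; your proof is correct and is essentially the standard one (Munkres' Lemma 70.1): a strong deformation retraction obtained by renormalising barycentric coordinates away from the vertices of $\Lambda$. Two minor points of precision, neither affecting correctness. First, your closing summary slightly misattributes where the induced hypothesis is used: it is needed only to guarantee that $\lambda(p) < 1$ for $p \in \abs{\Delta} \setminus \abs{\Lambda}$, so that $r$ and $H$ are well defined (which you do note correctly in the body); the other implication --- that a face with at least one vertex outside $V_\Lambda$ cannot lie in $\Lambda$ --- holds for any subcomplex, full or not. Second, ``lies in the same face $\sigma$ throughout'' should be read as ``lies in the closure of $\sigma$ throughout'', since for $0 < s < 1$ the carrier of $H(p,s)$ is $\sigma$ itself, but at $s = 1$ the point $r(p)$ drops into the proper open face spanned by $V_\sigma \setminus V_\Lambda$.
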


\begin{figure}
\centering
\begin{subfigure}{0.3\textwidth}
\centering
\begin{tikzpicture}[yscale=0.87, baseline=-1.4cm]

\foreach \x/\y [count=\i] in {-0.5/1, 0.5/1, 1.5/1, 0/0, 1/0, 2/0, 1.5/-1, 2.5/-1}
	\coordinate (v\i) at (\x,\y);

\foreach \a/\b/\c in {1/2/4, 2/4/5, 2/3/5, 3/5/6, 5/6/7, 6/7/8}
	\filldraw [face] (v\a) -- (v\b) -- (v\c) -- cycle;

\foreach \i in {1,...,8}
	\node [vertex] at (v\i) {};

\node [rectangle, name=mybox, fit=(v5) (v6) (v7) (v8), inner sep=6pt, draw, thick, dotted, pin=left:$Y$] {};

\node [rectangle, fit=(v1) (mybox), inner sep=6pt, draw, thick, dotted, pin=left:$X$] {};

\end{tikzpicture}
\caption{$X$ and $Y$}
\end{subfigure}
\begin{subfigure}{0.3\textwidth}
\centering
\begin{tikzpicture}[yscale=0.87, baseline=-1.4cm]

\foreach \x/\y [count=\i] in {-0.5/1, 0.5/1, 1.5/1, 0/0, 1/0, 2/0, 1.5/-1, 2.5/-1}
	\coordinate (v\i) at (\x,\y);

\fill [face] (v3) -- (v5) -- (v6) -- cycle;
\foreach \a/\b/\c in {1/2/4, 2/4/5, 2/3/5}
	\filldraw [face] (v\a) -- (v\b) -- (v\c) -- cycle;

\draw [edge] (v3) -- (v6);
\draw [missing edge] (v5) -- (v6);

\foreach \i in {1,...,4}
	\node [vertex] at (v\i) {};
\foreach \i in {5,...,6}
	\node [missing vertex] at (v\i) {};


\end{tikzpicture}
\caption{$\abs{X \setminus Y}$}
\end{subfigure}
\begin{subfigure}{0.3\textwidth}
\centering
\begin{tikzpicture}[yscale=0.87, baseline=-1.4cm]

\foreach \x/\y [count=\i] in {-0.5/1, 0.5/1, 1.5/1, 0/0, 1/0, 2/0, 1.5/-1, 2.5/-1}
	\coordinate (v\i) at (\x,\y);

\foreach \a/\b/\c in {1/2/4}
	\filldraw [face] (v\a) -- (v\b) -- (v\c) -- cycle;

\draw [edge] (v2) -- (v3);

\foreach \i in {1,...,4}
	\node [vertex] at (v\i) {};


\end{tikzpicture}
\caption{$\abs{X - Y}$}
\end{subfigure}
\caption{A comparison of the different types of deletion} \label{fig:deletion}
\end{figure}
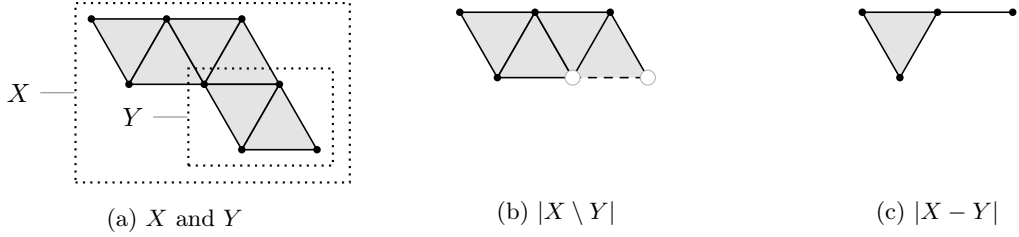

Given a face $\sigma$ of $X$, the \emph{(open) star} of $\sigma$ is the set of faces
\begin{equation*}
\st_X \sigma \coloneqq \{ \tau \in X : \sigma \subseteq \tau \}.
\end{equation*}
The geometric realisation of a star is always contractible, if $\sigma \neq \emptyset$. The \emph{link} of $\sigma$ is the set
\begin{align*}
\lk_X \sigma & \coloneqq \{ \tau \in X : \sigma \subset \tau, \ \tau \neq \sigma \} \\
& = \st_X \sigma \setminus \{\sigma\}.
\end{align*}
We will sometimes simply write ``$\st \sigma$'' and ``$\lk \sigma$'' if the space $X$ is clear from context. Note that if $\Delta$ is an abstract simplicial complex, the usual definition of a link is slightly different: we will refer to the usual definition as the ``simplicial link'', defined by
\begin{equation*}
\slk_\Delta \sigma \coloneq \{ \tau \in \Delta : \sigma \cup \tau \in \Delta, \ \sigma \cap \tau = \emptyset \}.
\end{equation*}
Although our link and the simplicial link are not the same, they are homotopy equivalent.

If $X$ is a polytopal complex, its \emph{barycentric subdivision} is the abstract simplicial complex $\bary(X)$ which has one vertex $v_\sigma$ for each non-empty face $\sigma$ of $X$, and a set $\{ v_{\sigma_1}, \dotsc, v_{\sigma_m} \}$ forms a face of $\bary(X)$ whenever $\{\sigma_1, \dotsc, \sigma_m\}$ is a chain in the poset of faces of $X$ ordered by inclusion (that is, $\sigma_1 \subset \dotsb \subset \sigma_m$, up to reordering). If each vertex $v_\sigma$ is positioned at the barycentre of the polytope $\sigma$, then the geometric realisation of $\bary(X)$ is exactly the geometric realisation of $X$, as a topological space. See \cref{fig:bary(X)} for an example.

\begin{figure}
\centering
\begin{tikzpicture}[scale=0.8]

\node (v1) [coordinate] at (0,0) {};
\node (v2) [coordinate] at (100:1.8) {};
\node (v3) [coordinate] at (-1.9,0.8) {};
\node (v4) [coordinate] at (-1.8,-0.9) {};
\node (v5) [coordinate] at (-0.1,-2.0) {};
\node (v6) [coordinate] at (10:1.8) {};
\node (v7) [coordinate] at ($(v6) + (100:1.8)$) {};

\filldraw [face] (v1)--(v2)--(v3)--(v4)--(v5)--cycle;
\filldraw [face] (v1)--(v5)--(v6)--cycle;
\filldraw [face] (v1)--(v2)--(v7)--(v6)--cycle;

\foreach \i/\j in {1/2, 2/3, 3/4, 4/5, 1/5, 1/6, 5/6, 2/7, 6/7}
	\node (v\i\j) [vertex] at ($(v\i)!0.5!(v\j)$) {};
\node (v12345) [vertex] at ($1/5*(v1)+1/5*(v2)+1/5*(v3)+1/5*(v4)+1/5*(v5)$) {};
\node (v156) [vertex] at ($1/3*(v1)+1/3*(v5)+1/3*(v6)$) {};
\node (v1267) [vertex] at ($1/4*(v1)+1/4*(v2)+1/4*(v6)+1/4*(v7)$) {};

\foreach \i in {1,...,5,12,23,34,45,15}
	\draw [edge, thin] (v\i)--(v12345);
\foreach \i in {1,5,6,15,16,56}
	\draw [edge, thin] (v\i)--(v156);
\foreach \i in {1,2,6,7,12,27,67,16}
	\draw [edge, thin] (v\i)--(v1267);

\foreach \i in {1,...,7}
	\node [vertex] at (v\i) {};

\end{tikzpicture}
\caption{The barycentric subdivision of the polytopal complex in \cref{fig:running-example}} \label{fig:bary(X)}
\end{figure}
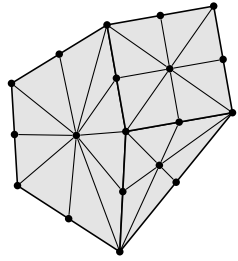

Given a polytopal complex $X$, the set of $k$-dimensional faces of $X$ will be denoted $X_k$. The \emph{$k$-skeleton} of $X$ is the subcomplex
\begin{equation*}
	\skel_k X \coloneqq \{ \sigma \in X : \dim \sigma \leq k \}.
\end{equation*}
The central definition in this paper is the \emph{$k$th co-skeleton} of $X$, which is the complement of the the $k$-skeleton:
\begin{align*}
\skel^c_k X & \coloneqq X \setminus \skel_k X \\
& = \{ \sigma \in X : \dim \sigma > k \}.
\end{align*}
We will mostly be interested in the topological properties of $\abs{\skel^c_k X} = \abs{X} \setminus \abs{\skel_k X}$. Note that the co-skeleton is not in general a polytopal complex, except in two special cases: $\skel^c_{-1} X$ is $X$ itself (modulo the empty face, which makes no difference to the topology), and if $X$ is $d$-dimensional, $\skel^c_d X$ is the polytopal complex with no faces.

\subsection{Topology}

We will use the notation $A \simeq B$ to indicate that the spaces $A$ and $B$ are homotopy equivalent.

We assume that the reader is familiar with homology and cohomology; for background, refer to \citet{book:Hatcher, book:Munkres-EAT}. While this paper does focus on cell complexes, many spaces we consider are not themselves cell complexes, so $H_i(A)$ will denote singular homology, with coefficients in $R$ where $R$ is a field or $\mathbb Z$. Reduced homology is denoted by $\tilde H_i(A)$. We use the convention that $\tilde H_{-1}(\emptyset) = R$. Analogous statements apply to cohomology, denoted $H^i(A)$.

For reference, here are two important theorems from algebraic topology that we will use repeatedly.

\begin{thm}[Mayer--Vietoris, {\citep[p.~149]{book:Hatcher}, \citep[Theorem~33.1]{book:Munkres-EAT}}] \label{thm:Mayer-Vietoris}
If $A$ and $B$ are open subsets of a topological space, then there is a long exact sequence:
\begin{equation*}
	\begin{tikzcd}[column sep=small]
		\dotsb \rar & H_i(A \cap B) \rar & H_i(A) \oplus H_i(B) \rar & H_i(A \cup B) \rar & H_{i-1}(A \cap B) \rar & \dotsb.
	\end{tikzcd}
\end{equation*}
If $A \cap B \neq \emptyset$, then we may replace these unreduced homology groups with reduced homology throughout.
\end{thm}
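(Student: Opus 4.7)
The plan is the classical proof via a short exact sequence of singular chain complexes, followed by the zigzag lemma. Let $C_*^{\mathcal U}(A \cup B) \subseteq C_*(A \cup B)$ denote the subcomplex of ``small'' singular chains --- those spanned by singular simplices whose image lies entirely in $A$ or entirely in $B$. The first step is to set up the short exact sequence of chain complexes
\begin{equation*}
0 \to C_*(A \cap B) \xrightarrow{\varphi} C_*(A) \oplus C_*(B) \xrightarrow{\psi} C_*^{\mathcal U}(A \cup B) \to 0,
\end{equation*}
where $\varphi(c) = (c, -c)$ via the natural inclusions and $\psi(a, b) = a + b$. Injectivity of $\varphi$ and surjectivity of $\psi$ are immediate, and exactness in the middle reduces to a direct check on singular simplices: if $a + b = 0$ in $C_*(A \cup B)$, then every singular simplex appearing in $a$ also appears in $b$, and in particular its image lies in $A \cap B$.

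The second step --- and the main technical obstacle --- is to show that the inclusion $C_*^{\mathcal U}(A \cup B) \hookrightarrow C_*(A \cup B)$ is a quasi-isomorphism. I would handle this by iterated barycentric subdivision of singular simplices: applying the Lebesgue number lemma to the pullback of the open cover $\{A, B\}$ along each $\sigma \colon \Delta^n \to A \cup B$ (here openness of $A$ and $B$ is crucial), sufficiently many subdivisions of $\sigma$ produce a small chain. Constructing an explicit natural chain homotopy between the subdivision operator and the identity, and showing that iterated subdivisions stabilise, then yields the required isomorphism on homology.

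With these two ingredients, applying the zigzag lemma to the short exact sequence above gives the desired long exact sequence in unreduced homology. For the reduced version under the hypothesis $A \cap B \neq \emptyset$, I would augment every chain complex in degree $-1$ by $R$ via the standard augmentation; the hypothesis ensures $\varphi$ remains injective and $\psi$ remains surjective at the augmented level, so the sequence is still short exact and the zigzag lemma produces a long exact sequence in reduced homology. Everything beyond the subdivision argument is formal homological algebra; that step is the sole nontrivial input.
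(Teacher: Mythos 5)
The paper does not prove this theorem: it is stated as a known result, with citations to Hatcher and Munkres, and is used as a black box throughout. Your proof is the standard one found in those references (short exact sequence of singular chain complexes, the small-chains quasi-isomorphism via iterated barycentric subdivision and the Lebesgue number lemma, then the zigzag lemma), and it is correct. Two small remarks. First, exactness in the middle of the chain-level sequence deserves one more half-sentence than you give it: from $a+b=0$ one gets $a=-b$, and since $a$ is a chain in $A$ and $-b$ a chain in $B$, equality forces each simplex occurring in $a$ to have image inside both $A$ and $B$, hence inside $A\cap B$, so $(a,b)=\varphi(a)$ with $a\in C_*(A\cap B)$. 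Second, for the reduced version your claim that the hypothesis $A\cap B\neq\emptyset$ is what keeps $\varphi$ injective and $\psi$ surjective in the augmented degree is not quite the point: those maps are $c\mapsto(c,-c)$ and $(a,b)\mapsto a+b$ on copies of $R$ and are automatically injective and surjective regardless. The hypothesis is there so that all four spaces are non-empty and reduced homology is defined in the ordinary way, avoiding edge cases with the empty set. With the paper's convention $\tilde H_{-1}(\emptyset)=R$ the augmented argument in fact still goes through, but the non-emptiness assumption is the customary way to sidestep that discussion.
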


\begin{thm}[Nerve theorem, {\citep[Corollay~4G.3]{book:Hatcher}, \citep[Theorem~10.7]{art:Bjorner}}] \label{thm:nerve}
Suppose $\mathcal U = \{U_1, \dotsc, U_n\}$ is a family of open sets whose union is a paracompact space (e.g.\ any subspace of $\mathbb R^n$), or a family of closed sets whose union is a triangulable space. Suppose further that for every index set $I \subseteq \{1, \dotsc, n\}$, the intersection $\bigcap_{i \in I} U_i$ is either contractible or empty.

Construct a simplicial complex $N(\mathcal U)$ (called the ``nerve'' of $\mathcal U$) where the vertex set is $\{1, \dotsc, n\}$ and the set $I$ forms a face whenever $\bigcap_{i \in I} U_i$ is non-empty. Then
\begin{equation*}
\bigcup_{i=1}^n U_i \simeq \abs{N(\mathcal U)}.
\end{equation*}
\end{thm}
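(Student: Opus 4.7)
The plan is to construct a map $\phi \colon X \to \abs{N(\mathcal U)}$ (where $X = \bigcup_i U_i$) using a partition of unity, show it induces isomorphisms on homology by induction on $n$ via \cref{thm:Mayer-Vietoris}, and then promote this to a homotopy equivalence with Whitehead's theorem. Since $X$ is paracompact, choose a partition of unity $\{\psi_i\}_{i=1}^n$ with $\operatorname{supp} \psi_i \subseteq U_i$, and set $\phi(x) \coloneqq \sum_i \psi_i(x) v_i$, where the $v_i$ are the vertices of $\abs{N(\mathcal U)}$. This is well-defined: if $\psi_{i_1}(x), \dotsc, \psi_{i_k}(x) > 0$ then $x \in U_{i_1} \cap \dotsb \cap U_{i_k}$ is non-empty, so $\{i_1, \dotsc, i_k\}$ spans a simplex of $N(\mathcal U)$.

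For the induction, split off $U_n$: let $X' = \bigcup_{i < n} U_i$ and $\mathcal U' = \{U_1, \dotsc, U_{n-1}\}$, so $X = X' \cup U_n$. On the nerve side, let $B$ be the union of closed simplices of $\abs{N(\mathcal U)}$ containing the vertex $v_n$; this is a cone with apex $v_n$ over the simplicial link $L \coloneqq \slk_{N(\mathcal U)}(v_n)$, so $B$ is contractible, $\abs{N(\mathcal U)} = \abs{N(\mathcal U')} \cup B$, and $\abs{N(\mathcal U')} \cap B$ deformation retracts to $\abs{L}$. Crucially, $L$ is the nerve of the cover $\{U_i \cap U_n\}_{i < n}$ of $X' \cap U_n$, whose multi-way intersections are still contractible or empty by hypothesis. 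Choosing partition-of-unity maps on each of $X'$, $U_n$, and $X' \cap U_n$ compatibly, so that the three resulting maps assemble with $\phi$ into a commuting ladder between Mayer--Vietoris sequences, the inductive hypothesis and the five lemma together yield isomorphisms $\phi_\ast \colon H_\ast(X) \to H_\ast(\abs{N(\mathcal U)})$. The base case $n = 1$ is trivial, as $U_1$ and $\abs{N(\mathcal U)}$ are both contractible.

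The main obstacle is promoting this homology equivalence to a genuine homotopy equivalence, since the five lemma alone produces only the former. Both $\abs{N(\mathcal U)}$ and (under the stated hypotheses) $X$ have the homotopy type of CW complexes, so it suffices to check that $\phi$ induces isomorphisms on $\pi_1$ as well; this follows from a parallel inductive argument using van Kampen's theorem in low degrees, and Whitehead's theorem then finishes the job. The closed-cover version on a triangulable space reduces to the open case by thickening each closed $U_i$ to an open regular neighbourhood in a sufficiently fine triangulation, so that the intersection pattern and contractibility are preserved. The truly delicate point throughout is maintaining compatible choices of partition of unity at each stage of the induction so that the ladder of Mayer--Vietoris sequences actually commutes.
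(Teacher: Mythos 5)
The paper does not prove this statement --- it is quoted as a known result with citations to Hatcher and Bj\"orner, both of whom prove it by exhibiting an intermediate space (the ``Mayer--Vietoris blowup'' or homotopy colimit of the diagram of intersections) that admits deformation retractions onto both $\bigcup_i U_i$ and $\abs{N(\mathcal U)}$. Your inductive partition-of-unity scheme is a genuinely different strategy, and much of it is sound: the decomposition $X = X' \cup U_n$ matching $\abs{N(\mathcal U)} = \abs{N(\mathcal U')} \cup B$, the identification of $\abs{N(\mathcal U')} \cap B$ with the link $\abs{L}$, and the observation that $L$ is the nerve of $\{U_i \cap U_n\}_{i<n}$ on $X' \cap U_n$, with the contractibility hypothesis inherited, are all correct and are exactly the right inductive scaffolding.

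The gap is in the final step. Whitehead's theorem upgrades a map of CW complexes to a homotopy equivalence only if it is a \emph{weak} homotopy equivalence, i.e.\ an isomorphism on \emph{all} homotopy groups. Knowing that $\phi$ is an isomorphism on singular homology and on $\pi_1$ does not give this when the spaces are not simply connected: the correct homological criterion requires isomorphisms on homology with \emph{local} coefficients (equivalently, on the homology of universal covers), and your Mayer--Vietoris ladder is run only with constant coefficients. There are standard examples of maps of CW complexes inducing isomorphisms on $\pi_1$ and on all $H_*(-;\mathbb Z)$ that are nevertheless not homotopy equivalences. A second, smaller issue is that you assert $X$ has the homotopy type of a CW complex; this is not obvious a priori for an arbitrary paracompact union of contractible opens and is essentially a consequence of the theorem you are proving, so citing it mid-proof is circular. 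The clean repair within your framework is to replace the homology-plus-Whitehead step with a direct gluing argument: if $X = X_1 \cup X_2$ and $Y = Y_1 \cup Y_2$ are homotopy pushouts (e.g.\ open covers, or CW pairs with cofibrant inclusions) and $f \colon X \to Y$ restricts to homotopy equivalences on $X_1$, $X_2$, and $X_1 \cap X_2$, then $f$ is itself a homotopy equivalence. That gluing lemma is exactly what your induction needs, it sidesteps Whitehead and local coefficients entirely, and it still requires the careful compatibility of partitions of unity that you correctly flag as the delicate bookkeeping point.
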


\section{First results about co-skeletons} \label{sec:general}

Let us begin to study the co-skeleton $\skel^c_k X$.

This first lemma tells us that while $\skel^c_k X$ is not itself a polytopal complex, it is homotopy equivalent to one. See \cref{fig:coskel-simeq-bary}.

\begin{lma} \label{thm:coskel-simeq-bary}
$\abs{\skel^c_k X} \simeq \abs{\bary(X) - \bary(\skel_k X)}$.
\end{lma}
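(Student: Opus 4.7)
The plan is to reduce this to \cref{thm:deletion-homotopy-equivalence} by recognising $\bary(\skel_k X)$ as an induced subcomplex of $\bary(X)$.

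First I would verify this inducedness claim. A vertex of $\bary(X)$ is $v_\sigma$ for a non-empty face $\sigma \in X$, and the vertices of $\bary(\skel_k X)$ are exactly those $v_\sigma$ with $0 \leq \dim \sigma \leq k$. A face of $\bary(X)$ is a chain $\{v_{\sigma_1}, \dotsc, v_{\sigma_m}\}$ with $\sigma_1 \subsetneq \dotsb \subsetneq \sigma_m$. If all of its vertices lie in $\bary(\skel_k X)$, then each $\sigma_i$ has dimension at most $k$, so the entire chain is already a chain in the subposet of faces of $\skel_k X$ and therefore belongs to $\bary(\skel_k X)$. This is precisely the induced-subcomplex condition, so \cref{thm:deletion-homotopy-equivalence} applies to give
\begin{equation*}
\abs{\bary(X) - \bary(\skel_k X)} \simeq \abs{\bary(X)} \setminus \abs{\bary(\skel_k X)}.
\end{equation*}

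Next I would identify the right-hand side as $\abs{\skel^c_k X}$. Using the remark from the preliminaries that positioning $v_\sigma$ at the barycentre of $\sigma$ makes $\abs{\bary(X)}$ and $\abs{X}$ equal as topological spaces, and applying the same observation to the subcomplex $\skel_k X$, I get $\abs{\bary(X)} = \abs{X}$ and $\abs{\bary(\skel_k X)} = \abs{\skel_k X}$. Combining this with the identity $\abs{X \setminus Y} = \abs{X} \setminus \abs{Y}$ noted in the preliminaries (applied to $Y = \skel_k X$), I conclude
\begin{equation*}
\abs{\bary(X)} \setminus \abs{\bary(\skel_k X)} = \abs{X} \setminus \abs{\skel_k X} = \abs{\skel^c_k X},
\end{equation*}
which chains together with the previous display to give the claimed homotopy equivalence.

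There is no real obstacle here; the only point requiring a moment's thought is the verification that $\bary(\skel_k X)$ is induced in $\bary(X)$, since one must be sure that a chain of faces-of-$X$ all of low dimension does not somehow ``escape'' the subdivided skeleton. The definition of $\bary$ by chains in the face poset makes this immediate, which is the main reason to pass to the barycentric subdivision in the first place --- the skeleton, which is a dimensional condition on faces of $X$, becomes a purely vertex-based condition after subdividing, exactly the hypothesis needed to invoke \cref{thm:deletion-homotopy-equivalence}.
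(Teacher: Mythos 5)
Your proposal is correct and follows essentially the same route as the paper: verify that $\bary(\skel_k X)$ is an induced subcomplex of $\bary(X)$ via the chain-in-face-poset description, invoke \cref{thm:deletion-homotopy-equivalence}, and identify $\abs{\bary(X)} \setminus \abs{\bary(\skel_k X)}$ with $\abs{X} \setminus \abs{\skel_k X} = \abs{\skel^c_k X}$ via the barycentric realisation. The only difference is the order in which the two halves are presented, which is immaterial.
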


\begin{figure}
\centering
\begin{subfigure}{0.4\textwidth}
\centering
\begin{tikzpicture}[scale=0.8]

\node (v1) [coordinate] at (0,0) {};
\node (v2) [coordinate] at (100:1.8) {};
\node (v3) [coordinate] at (-1.9,0.8) {};
\node (v4) [coordinate] at (-1.8,-0.9) {};
\node (v5) [coordinate] at (-0.1,-2.0) {};
\node (v6) [coordinate] at (10:1.8) {};
\node (v7) [coordinate] at ($(v6) + (100:1.8)$) {};

\foreach \i/\j in {1/2, 2/3, 3/4, 4/5, 1/5, 1/6, 5/6, 2/7, 6/7}
	\node (v\i\j) [coordinate] at ($(v\i)!0.5!(v\j)$) {};
\node (v12345) [coordinate] at ($1/5*(v1)+1/5*(v2)+1/5*(v3)+1/5*(v4)+1/5*(v5)$) {};
\node (v156) [coordinate] at ($1/3*(v1)+1/3*(v5)+1/3*(v6)$) {};
\node (v1267) [coordinate] at ($1/4*(v1)+1/4*(v2)+1/4*(v6)+1/4*(v7)$) {};

\filldraw [face] (v1)--(v2)--(v3)--(v4)--(v5)--cycle;
\filldraw [face] (v1)--(v5)--(v6)--cycle;
\filldraw [face] (v1)--(v2)--(v7)--(v6)--cycle;


\foreach \i in {1,...,7}
	\node [missing vertex] at (v\i) {};

\end{tikzpicture}
\caption{$\skel^c_0 X$}
\end{subfigure}
\begin{subfigure}{0.4\textwidth}
\centering
\begin{tikzpicture}[scale=0.8]

\node (v1) [coordinate] at (0,0) {};
\node (v2) [coordinate] at (100:1.8) {};
\node (v3) [coordinate] at (-1.9,0.8) {};
\node (v4) [coordinate] at (-1.8,-0.9) {};
\node (v5) [coordinate] at (-0.1,-2.0) {};
\node (v6) [coordinate] at (10:1.8) {};
\node (v7) [coordinate] at ($(v6) + (100:1.8)$) {};

\draw [sketch edge] (v2)--(v3)--(v4)--(v5)--(v6)--(v7)--cycle--(v1)--(v5) (v1)--(v6);

\foreach \i/\j in {1/2, 2/3, 3/4, 4/5, 1/5, 1/6, 5/6, 2/7, 6/7}
	\node (v\i\j) [vertex] at ($(v\i)!0.5!(v\j)$) {};
\node (v12345) [vertex] at ($1/5*(v1)+1/5*(v2)+1/5*(v3)+1/5*(v4)+1/5*(v5)$) {};
\node (v156) [vertex] at ($1/3*(v1)+1/3*(v5)+1/3*(v6)$) {};
\node (v1267) [vertex] at ($1/4*(v1)+1/4*(v2)+1/4*(v6)+1/4*(v7)$) {};

\foreach \ij in {12,23,34,45,15}
	\draw [edge] (v\ij)--(v12345);
\foreach \ij in {15,16,56}
	\draw [edge] (v\ij)--(v156);
\foreach \ij in {12,27,67,16}
	\draw [edge] (v\ij)--(v1267);


\end{tikzpicture}
\caption{$\bary(X) - \bary(\skel_0 X)$}
\end{subfigure}
\caption{\Cref{thm:coskel-simeq-bary} applied to \cref{fig:running-example}, with $k=0$} \label{fig:coskel-simeq-bary}
\end{figure}

\begin{proof}
By definition,
\begin{align*}
	\abs{\skel^c_k X} & = \abs{X} \setminus \abs{\skel_k X} \\
	& = \abs{\bary(X)} \setminus \abs{\bary(\skel_k X)}.
\end{align*}
Now, $\bary(\skel_k X)$ is an induced subcomplex of the simplicial complex $\bary(X)$: if $v_{\sigma_1}, \dotsc, v_{\sigma_m}$ are vertices of $\bary(\skel_k X)$ that form a face of $\bary(X)$, then the faces $\sigma_1, \dotsc, \sigma_m$ form a chain in the face poset of $X$, so they still form a chain in the face poset of $\skel_k X$. Therefore, we may invoke \cref{thm:deletion-homotopy-equivalence}:
\begin{align*}
	\abs{\bary(X)} \setminus \abs{\bary(\skel_k X)} & \simeq \abs{\bary(X) - \bary(\skel_k X)}. \qedhere
\end{align*}
\end{proof}

\begin{crl} \label{thm:coskel-dimension}
$H_i(\skel^c_k \Delta) = 0$ when $i > d-k-1$.
\end{crl}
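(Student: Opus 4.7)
The plan is to deduce the corollary directly from \cref{thm:coskel-simeq-bary} by bounding the dimension of the simplicial complex on the right-hand side. Since homology above the dimension of a simplicial complex always vanishes, and homology is a homotopy invariant, it suffices to show that $\bary(\Delta) - \bary(\skel_k \Delta)$ has dimension at most $d - k - 1$, where $d = \dim \Delta$.

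To see this, I would unpack the definition of $\bary(\Delta) - \bary(\skel_k \Delta)$: its faces correspond to chains $\sigma_1 \subsetneq \sigma_2 \subsetneq \dotsb \subsetneq \sigma_m$ of faces of $\Delta$, none of which lies in $\skel_k \Delta$, i.e., every $\sigma_i$ satisfies $\dim \sigma_i > k$. Since also $\dim \sigma_i \leq d$, the dimensions of the $\sigma_i$ lie in the set $\{k+1, k+2, \dotsc, d\}$, which has cardinality $d - k$. A strictly increasing chain can therefore contain at most $d - k$ faces, so the corresponding simplex in $\bary(\Delta)$ has at most $d - k$ vertices, meaning dimension at most $d - k - 1$.

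Combining, $\abs{\skel^c_k \Delta} \simeq \abs{\bary(\Delta) - \bary(\skel_k \Delta)}$ is homotopy equivalent to a simplicial complex of dimension at most $d - k - 1$, hence has vanishing (singular) homology in degrees $i > d - k - 1$.

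There is no real obstacle here; the result is essentially immediate once \cref{thm:coskel-simeq-bary} is in hand, since the barycentric model makes the dimension bound combinatorially transparent. The only point to take care of is the strict inequality in the chain, ensuring that the $d - k$ available dimensions translate into at most a $(d-k-1)$-simplex rather than a $(d-k)$-simplex.
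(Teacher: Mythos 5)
Your proof is correct and follows the same route as the paper: invoke \cref{thm:coskel-simeq-bary} and then bound the dimension of $\bary(\Delta) - \bary(\skel_k \Delta)$ by counting the possible dimensions $\{k+1,\dotsc,d\}$ of faces in a chain, concluding that the longest chain has $d-k$ elements and hence the complex has dimension at most $d-k-1$. Your write-up just spells out the chain-length argument a bit more explicitly than the paper does.
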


\begin{proof}
The vertices of $\bary(X) - \bary(\skel_k X)$ correspond to faces $\sigma$ of $X$ of dimensions between $k+1$ and $d$; therefore, the largest possible face of $\bary(X) - \bary(\skel_k X)$ has $d-k$ vertices, so its dimension is $d-k-1$.
\end{proof}

The name ``co-skeleton'' was chosen to suggest ``complement'', but also duality. Let us illustrate why.

\begin{prop} \label{thm:polytope-duality}
Suppose $P$ is a $(d+1)$-dimensional polytope. Then
\begin{equation*}
	\abs{\skel^c_k \partial P} \simeq \abs{\skel_{d-k-1} \partial P^*},
\end{equation*}
where $P^*$ is the polar dual polytope to $P$.
\end{prop}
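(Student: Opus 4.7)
The plan is to reduce everything to abstract simplicial combinatorics by passing to barycentric subdivisions, where polar duality becomes a literal isomorphism. Recall that for a $(d+1)$-polytope $P$, polar duality gives an order-reversing bijection between the non-empty proper faces of $P$ and those of $P^*$, sending a $j$-face $\sigma$ of $\partial P$ to a $(d-j)$-face $\sigma^*$ of $\partial P^*$. Since the order complex of a poset depends only on its chains (not on their direction), this anti-isomorphism of face posets yields an equality of abstract simplicial complexes $\bary(\partial P) = \bary(\partial P^*)$, where the vertex $v_\sigma$ is identified with $v_{\sigma^*}$.

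First, I would apply \cref{thm:coskel-simeq-bary} to rewrite
\begin{equation*}
\abs{\skel^c_k \partial P} \simeq \abs{\bary(\partial P) - \bary(\skel_k \partial P)}.
\end{equation*}
Under the identification above, a vertex $v_\sigma \in \bary(\skel_k \partial P)$ (so $\dim\sigma \le k$) corresponds to the vertex $v_{\sigma^*} \in \bary(\partial P^*)$ with $\dim \sigma^* \ge d - k$, i.e.\ to a vertex lying over a face in the co-skeleton $\skel^c_{d-k-1} \partial P^*$. Thus the subset of vertices being deleted matches exactly, and
\begin{equation*}
\bary(\partial P) - \bary(\skel_k \partial P) \;=\; \bary(\partial P^*) - \{v_\tau : \dim\tau > d-k-1\}.
\end{equation*}

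Second, I would observe that this last simplicial complex is nothing but $\bary(\skel_{d-k-1} \partial P^*)$: a chain of faces $\sigma_1 \subsetneq \dotsb \subsetneq \sigma_m$ in $\partial P^*$ is a face of the deletion iff every $\sigma_i$ has dimension at most $d-k-1$, which is the same as being a chain in the sub-poset of $\skel_{d-k-1} \partial P^*$. Since the geometric realisation of $\bary(Y)$ equals that of $Y$ for any polytopal complex $Y$, this gives
\begin{equation*}
\abs{\bary(\skel_{d-k-1} \partial P^*)} = \abs{\skel_{d-k-1} \partial P^*},
\end{equation*}
completing the chain of (equalities and one homotopy equivalence) from $\abs{\skel^c_k \partial P}$ to $\abs{\skel_{d-k-1} \partial P^*}$.

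There is no real obstacle here; the only points requiring care are (i) checking that the combinatorial deletion on the $\partial P$ side corresponds under the duality to the combinatorial deletion on the $\partial P^*$ side (a bookkeeping check on dimensions), and (ii) noting that \cref{thm:coskel-simeq-bary} is only needed in one direction — the skeleton side requires no homotopy equivalence, merely the tautology that a subcomplex and its barycentric subdivision have the same geometric realisation.
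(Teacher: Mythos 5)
Your proof is correct and follows essentially the same route as the paper's: both invoke \cref{thm:coskel-simeq-bary}, both exploit the order-reversing isomorphism between the face posets of $\partial P$ and $\partial P^*$ to identify barycentric subdivisions, and both carry out the same dimension bookkeeping ($\dim\sigma > k \iff \dim\sigma^* \le d-k-1$). Your write-up is a bit more explicit about which vertices are deleted and why the resulting complex is $\bary(\skel_{d-k-1}\partial P^*)$, but the argument is the same.
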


\begin{proof}
By \cref{thm:coskel-simeq-bary},
\begin{equation*}
\abs{\skel^c_k \partial P} \simeq \abs{\bary(\partial P) - \bary(\skel_k \partial P)}.
\end{equation*}
The faces of $\bary(\partial P) - \bary(\skel_k \partial P)$ are the chains in the poset of faces of $\partial P$ of dimension greater than $k$. But the poset of non-empty faces of $\partial P$ is isomorphic to the poset of non-empty faces of $\partial P^*$, flipped upside down. Under this flip, the faces of $\partial P$ of dimension greater than $k$ become faces of $\partial P^*$ of dimension less than or equal to $d-k-1$. Therefore, $\bary(\partial P) - \bary(\skel_k \partial P) = \bary(\skel_{d-k-1} \partial P^*)$, so
\begin{align*}
	\abs{\bary(\partial P) - \bary(\skel_k \partial P)} & = \abs{\bary(\skel_{d-k-1} \partial P^*)} \\
	& = \abs{\skel_{d-k-1} \partial P^*}. \qedhere
\end{align*}
\end{proof}

Note that the fact that $P$ is a polytope is not essential to this proposition, just that $P$ has an associated ``dual cell structure''. In fact, if $X$ is a $d$-dimensional homology manifold, the space $\abs{\bary(X) - \bary(\skel_k X)}$ is exactly the $(d-k-1)$-skeleton of the dual cell structure used in some proofs of Poincar\'e duality --- for example, \citet[\S{}64]{book:Munkres-EAT} calls this space the ``dual $(d-k-1)$-skeleton'' of the manifold.

A related duality result is the Alexander duality theorem, which has direct implications for co-skeletons.
\begin{thm}[Alexander duality, {\citep[\S{}71, particularly Theorem~71.1 and Exercise~4]{book:Munkres-EAT}}] \label{thm:Alexander-duality}
Suppose $X$ is a polytopal complex where $\dim \Delta = d$ and $\abs{X}$ is homeomorphic to a sphere, and suppose $Y$ is a proper, non-empty subcomplex of $X$. Then
\begin{equation*}
\tilde H^i(\abs{Y}) \cong \tilde H_{d-i-1}(\abs{X} \setminus \abs{Y}) \qquad \text{and} \qquad \tilde H_i(\abs{Y}) \cong \tilde H^{d-i-1}(\abs{X} \setminus \abs{Y}).
\end{equation*}
\end{thm}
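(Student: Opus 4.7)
The plan is to reuse the dual cell structure idea from the proof of \cref{thm:polytope-duality}, combined with cellular Poincar\'e duality on the sphere $\abs{X}$ and the long exact sequence of a pair.

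First, I would apply \cref{thm:deletion-homotopy-equivalence} to the induced subcomplex $\bary(Y) \subseteq \bary(X)$, obtaining $\abs{X} \setminus \abs{Y} \simeq \abs{\bary(X) - \bary(Y)}$. Just as in \cref{thm:polytope-duality}, the right-hand space is precisely the underlying space of the subcomplex $Y^\vee$ of the dual cell structure $X^*$ on the sphere $\abs{X} \cong S^d$, where $Y^\vee$ consists of all dual cells $\sigma^*$ with $\sigma \in X \setminus Y$. This really is a subcomplex of $X^*$: if $\sigma \notin Y$ and $\tau^*$ lies in the closure of $\sigma^*$, then $\sigma$ is a face of $\tau$, so $\tau \notin Y$ because $Y$ is closed under taking faces.

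Next, I would invoke the chain-level form of cellular Poincar\'e duality on $\abs{X} \cong S^d$: the Kronecker pairing $\sigma \otimes \tau^* \mapsto \delta_{\sigma, \tau}$ between cells of $X$ and $X^*$ is a perfect pairing of chain/cochain complexes that is compatible with differentials up to sign. It sends $C_i(Y) \subseteq C_i(X)$ to the annihilator of $C_{d-i}(Y^\vee) \subseteq C_{d-i}(X^*)$, so it descends to perfect pairings identifying $C_i(X, Y) \cong C^{d-i}(Y^\vee)$ and, symmetrically, $C^i(X, Y) \cong C_{d-i}(Y^\vee)$. Passing to (co)homology yields $H_i(X, Y) \cong H^{d-i}(Y^\vee)$ and $H^i(X, Y) \cong H_{d-i}(Y^\vee)$.

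Finally, I would translate the relative (co)homology of $(X, Y)$ into reduced (co)homology of $Y$ via the long exact sequence. Since $\abs{X} \cong S^d$, the reduced (co)homology of $X$ vanishes outside degree $d$, so for $0 < i < d$ the sequence immediately gives $H_i(X, Y) \cong \tilde H_{i-1}(Y)$ and $H^i(X, Y) \cong \tilde H^{i-1}(Y)$. Combining with the previous step and substituting $j = d - i - 1$ then produces the two stated isomorphisms in this inner range. The hard part will be the boundary cases $i \in \{0, d\}$: I expect them to work out because the hypothesis that $Y$ is proper and non-empty ensures that $\abs{Y}$ embeds into $S^d \setminus \{\mathrm{pt}\} \cong \mathbb R^d$, forcing the map $\tilde H^d(X) \to \tilde H^d(Y)$ to be zero, while the discrepancy between $H^0$ and $\tilde H^0$ exactly cancels the extra $R$ summand arising from $\tilde H^d(X)$, yielding the remaining boundary isomorphisms once reduced (co)homology is used throughout.
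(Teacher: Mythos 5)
The paper does not prove this theorem: it is cited as a black box from Munkres (\S 71), and your proposal is essentially a reconstruction of Munkres's own argument via the dual block complex, so it is the ``same approach'' in that sense. The chain-level reasoning is sound: $\bary(X)-\bary(Y)$ is exactly the dual block subcomplex $Y^\vee$ carried by the simplices $\sigma\notin Y$, the orientability of $S^d$ gives the sign-compatible perfect pairing $C_i(X)\cong C^{d-i}(X^*)$, this pairing identifies $C_i(Y)$ with the annihilator of $C_{d-i}(Y^\vee)$, and passing to the quotient gives $H_i(X,Y)\cong H^{d-i}(Y^\vee)$ and the symmetric statement. The inner-range translation through the long exact sequence of the pair is also correct.

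The only place you wave your hands is the boundary case $i=0$ (equivalently $j=d-1$), and while your intuition is right, the filling-in is worth spelling out, since ``the discrepancy between $H^0$ and $\tilde H^0$ exactly cancels the extra $R$'' is only true after one identifies the image of the fundamental class. Concretely: $\tilde H_d(\abs Y)=0$ because $\abs Y$ is a compact proper subset of $S^d$, so the LES degenerates to $0\to\tilde H_d(X)\to H_d(X,Y)\to\tilde H_{d-1}(Y)\to 0$. Under the duality $H_d(X,Y)\cong H^0(Y^\vee)$, the fundamental cycle $\sum\pm\sigma$ (coherently oriented $d$-cells) is sent to the constant $0$-cochain on $Y^\vee$, since each $\sigma^*$ is a single vertex and the coherent signs are all $+1$; the cokernel of $R\hookrightarrow H^0(Y^\vee)$ along the constants is exactly $\tilde H^0(Y^\vee)$, which gives $\tilde H_{d-1}(\abs Y)\cong\tilde H^0(\abs X\setminus\abs Y)$ without any splitting assumption. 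The cohomological boundary case follows symmetrically, again using that $\tilde H^d(\abs Y)=0$. With that detail added, your argument is complete and matches the cited Munkres proof.
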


Taking $Y = \skel_k X$ gives us this corollary:

\begin{crl} \label{thm:Alexander-duality-for-coskeletons}
If $X$ is a $d$-dimensional polytopal complex where $\abs{X}$ is homeomorphic to a sphere, then for $k = 0, \dotsc, d-1$,
\begin{equation*}
\tilde H^i(\abs{\skel_k X}) \cong \tilde H_{d-i-1}(\abs{\skel^c_k X}) \qquad \text{and} \qquad \tilde H_i(\abs{\skel_k X}) \cong \tilde H^{d-i-1}(\abs{\skel^c_k X}).
\end{equation*}
\end{crl}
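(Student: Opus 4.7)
The plan is to reduce this corollary to a direct application of Alexander duality (\cref{thm:Alexander-duality}), with $Y$ taken to be the $k$-skeleton $\skel_k X$. So first I would verify the hypotheses of that theorem hold for this choice of $Y$: namely, that $\skel_k X$ is a proper, non-empty subcomplex of $X$. Non-emptiness holds because $k \geq 0$ means $\skel_k X$ contains all vertices of $X$ (and $X$ is non-empty since $\abs{X}$ is a sphere). Properness holds because $k \leq d-1 < d = \dim X$, so there are facets of $X$ not contained in $\skel_k X$.

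Next I would invoke \cref{thm:Alexander-duality} directly to get
\begin{equation*}
\tilde H^i(\abs{\skel_k X}) \cong \tilde H_{d-i-1}(\abs{X} \setminus \abs{\skel_k X}) \qquad \text{and} \qquad \tilde H_i(\abs{\skel_k X}) \cong \tilde H^{d-i-1}(\abs{X} \setminus \abs{\skel_k X}).
\end{equation*}
Finally, I would use the definitional identification $\abs{\skel^c_k X} = \abs{X} \setminus \abs{\skel_k X}$, which was noted in the preliminaries when co-skeletons were introduced, to rewrite the right-hand sides in the desired form.

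There is essentially no obstacle here; the work is entirely in checking that Alexander duality applies. The only subtlety worth mentioning is the edge cases $k = 0$ and $k = d-1$, but both are within the stated range and both give a proper, non-empty $Y$ as verified above.
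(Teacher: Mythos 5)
Your proposal is correct and matches the paper's approach exactly: the paper also derives the corollary by taking $Y = \skel_k X$ in \cref{thm:Alexander-duality} and then identifying $\abs{X}\setminus\abs{\skel_k X}$ with $\abs{\skel^c_k X}$. Your explicit verification of non-emptiness and properness of $\skel_k X$ is a welcome addition that the paper leaves implicit.
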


For a first example of an application for co-skeletons, let us consider neighbourly simplicial spheres. Suppose $\Delta$ is a simplicial sphere (that is, a simplicial complex where $\abs{\Delta}$ is homeomorphic to a sphere) with $n$ vertices. Then $\Delta$ is \emph{$t$-neighbourly} if every set of $t$ vertices is the vertex set of a face, or equivalently, if its $(t-1)$-skeleton is isomorphic to the $(t-1)$-skeleton of a simplex with $n$ vertices. For example, the boundaries of $d$-dimensional cyclic polytopes are $\lfloor d/2 \rfloor$-neighbourly. A major reason why neighbourly spheres have been studied is their connection with the Upper Bound Conjecture (see e.g.\ \citet{art:Alon-Kalai}).

The following lemma will let us characterise skeletons of simplices by their homology. (It follows without much difficulty from \citet[Theorem~3.1]{art:Bjorner-Kalai} and \citet[Equation~(3.6)]{art:Kalai-shifting}, but we also present a self-contained proof.)

\begin{lma} \label{thm:skel-simplex-maximises-homology}
Suppose $\Delta$ is a simplicial complex with $n$ vertices, $n \geq 1$, and $\dim \Delta = k < n$. Then
\begin{equation*}
\rank \tilde H_k(\abs{\Delta}; \mathbb Z) \leq \binom{n-1}{k+1},
\end{equation*}
with equality if and only if $\Delta$ is isomorphic to the $k$-skeleton of a simplex with $n$ vertices. If the coefficient ring is a field instead of $\mathbb Z$, the same holds with rank replaced by dimension.
\end{lma}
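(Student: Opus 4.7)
The plan is to view $\Delta$ as a subcomplex of the full $(n-1)$-simplex $\Sigma$ on its $n$ vertices; since $\dim \Delta = k$, this inclusion factors through $\skel_k \Sigma$. Both $\Delta$ and $\skel_k \Sigma$ have no faces of dimension greater than $k$, so in each case the top reduced homology coincides with the top cycle group: $\tilde H_k(\Delta;\mathbb Z) = Z_k(\Delta)$ and $\tilde H_k(\skel_k \Sigma;\mathbb Z) = Z_k(\skel_k \Sigma)$. A standard computation (for instance by induction on $n$, or via the long exact sequence of the pair $(\Sigma, \skel_k \Sigma)$) shows that $\skel_k \Sigma$ is homotopy equivalent to a wedge of $\binom{n-1}{k+1}$ copies of $S^k$, so $Z_k(\skel_k \Sigma)$ is free abelian of rank $\binom{n-1}{k+1}$. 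The chain inclusion $C_k(\Delta) \hookrightarrow C_k(\skel_k \Sigma)$ restricts to $Z_k(\Delta) \hookrightarrow Z_k(\skel_k \Sigma)$, immediately giving $\rank \tilde H_k(\Delta;\mathbb Z) \leq \binom{n-1}{k+1}$; the analogous containment of cycle spaces handles the field case.

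For the equality characterisation, the ``if'' direction is trivial, so I would focus on the converse. Suppose $\Delta \subsetneq \skel_k \Sigma$; the degenerate case $k = n-1$ can be dispensed with first, since then $\Sigma$ has a unique $(n-1)$-face, which the dimension hypothesis forces into $\Delta$, giving $\Delta = \Sigma = \skel_k \Sigma$ and contradicting the strict inclusion. So assume $k < n-1$. Since $\Delta$ contains all $n$ vertices and is closed under subfaces, missing any face at all forces it to miss some $k$-face $\sigma$. I would then choose a vertex $v$ of $\Sigma$ not in $\sigma$ (possible because $k+1 < n$) and form the $(k+1)$-simplex $\tau = \{v\} \cup \sigma$ in $\Sigma$; its boundary $\partial \tau$ is a $k$-cycle in $\skel_k \Sigma$ with coefficient $\pm 1$ on $\sigma$. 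The linear functional on $C_k(\skel_k \Sigma)$ extracting the coefficient of $\sigma$ vanishes identically on $C_k(\Delta)$ but is nonzero on $\partial \tau$, so $Z_k(\Delta)$ sits in a proper direct summand of $Z_k(\skel_k \Sigma)$, strictly dropping the rank (and the dimension, over a field).

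The key subtlety, really the only nontrivial step, is that a strict inclusion of free abelian groups need not produce a strict rank drop (compare $2\mathbb Z \subsetneq \mathbb Z$), so it is not enough to argue purely from $Z_k(\Delta) \subsetneq Z_k(\skel_k \Sigma)$. Producing the explicit coordinate functional separating the two cycle groups is what delivers strict inequality over $\mathbb Z$; over any field the conclusion follows automatically from containment of vector spaces whose dimensions differ by at least one.
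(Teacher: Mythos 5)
Your argument is correct, and it follows a genuinely different route from the paper. The paper proceeds by induction on $n$, using a Mayer--Vietoris decomposition with $A = \abs{\st n}$ and $B = \abs{\Delta}\setminus\{n\}$: the link and deletion contributions give the binomial recursion $\binom{n-2}{k+1} + \binom{n-2}{k} = \binom{n-1}{k+1}$, and the equality case is traced recursively through both terms. You instead work entirely at the chain level: since $C_{k+1} = 0$ for both $\Delta$ and $\skel_k\Sigma$, the top homology is literally the cycle group $Z_k$, and the containment $Z_k(\Delta)\subseteq Z_k(\skel_k\Sigma)$ gives the bound once one knows $\rank Z_k(\skel_k\Sigma) = \binom{n-1}{k+1}$. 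Your handling of the equality case is the real content, and you flag the right pitfall: a proper inclusion of free abelian groups need not drop rank. Your resolution --- exhibiting the coordinate functional $\phi$ (coefficient of the missing $k$-face $\sigma$) which vanishes on $C_k(\Delta)$ but hits $\pm 1$ on the cycle $\partial\tau$, so $Z_k(\Delta)$ lies in the rank-deficient kernel of a split surjection $Z_k(\skel_k\Sigma)\twoheadrightarrow\mathbb Z$ --- is clean and correct, and the peeling-off of the $k = n-1$ case is handled properly. The trade-off between the two proofs: the paper's inductive argument is fully self-contained and simultaneously establishes the value $\binom{n-1}{k+1}$, while yours is non-inductive, shorter, and more transparent about why strictness holds over $\mathbb Z$, but delegates the computation of $\tilde H_k(\skel_k\Sigma)$ to a standard fact. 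One small point worth making explicit if you write this up: the reduction ``missing any face forces missing a $k$-face'' uses that $k \leq n-1 = \dim\Sigma$, so every lower-dimensional face of $\Sigma$ extends to a $k$-face --- you are implicitly using this, and it is exactly where the hypothesis $k < n$ enters.
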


\begin{proof}
For simplicity of notation, assume the coefficient ring is $\mathbb Z$; the argument when the coefficient ring is a field is similar.

We will prove this by induction on $n$. For the base case of $n = 1$, there is only one simplicial complex with $1$ vertex: it is the $0$-skeleton of the simplex on $1$ vertex, and its homology matches the formula above with $k = 0$.

Now, for the inductive step, label the vertices of $\Delta$ as $\{1,\dotsc,n\}$, and define the following two open sets:
\begin{align*}
A & \coloneq \abs{\st n}, \\
B & \coloneq \abs{\Delta} \setminus \{n\}.
\end{align*}
Then $A \cup B$ is $\abs{\Delta}$, and $A \cap B$ is $\abs{\st n} \setminus \{n\} = \abs{\lk n}$. The Mayer--Vietoris theorem (\cref{thm:Mayer-Vietoris}) then gives us the following long exact sequence:
\begin{equation*}
\begin{tikzcd}
\dotsb \rar & \tilde H_i(\abs{\lk n}) \rar & \tilde H_i(\abs{\st n}) \oplus \tilde H_i(\abs{\Delta} \setminus \{n\}) \rar & \tilde H_i(\abs{\Delta}) \rar & \dotsb.
\end{tikzcd}
\end{equation*}
Since $\abs{\st n}$ is contractible, $\tilde H_i(\abs{\st n})$ is zero. Now, consider the following segment of the exact sequence:
\begin{equation*}
\begin{tikzcd}
\tilde H_k(\abs{\Delta} \setminus \{n\}) \rar & \tilde H_k(\abs{\Delta}) \rar & \tilde H_{k-1}(\abs{\lk n}).
\end{tikzcd}
\end{equation*}
By \cref{thm:deletion-homotopy-equivalence}, $\abs{\Delta} \setminus \{n\}$ is homotopy equivalent to $\abs{\Delta - n}$, which is a simplicial complex with $n-1$ vertices and dimension less than or equal to $k$, so by induction, $\rank \tilde H_k(\abs{\Delta} \setminus \{n\}) \leq \binom{n-2}{k+1}$. (Note that if the dimension of $\Delta - n$ is less than $k$, its degree $k$ homology must be zero.) Similarly, $\lk n$ is homotopy equivalent to the simplicial link of $n$, which is a simplicial complex of dimension at most $k-1$ and with at most $n-1$ vertices, so by induction, $\rank \tilde H_{k-1}(\abs{\lk n}) \leq \binom{n-2}{k}$. Therefore,
\begin{equation*}
\rank \tilde H_k(\abs{\Delta}) \leq \binom{n-2}{k+1} + \binom{n-2}{k} = \binom{n-1}{k+1}.
\end{equation*}
To obtain equality for $\rank \tilde H_k(\abs{\Delta})$, we need equality to hold for both $\rank \tilde H_k(\abs{\Delta - n})$ and $\rank \tilde H_{k-1}(\abs{\lk n})$, so by induction, both $\Delta - n$ and $\slk n$ need to be skeletons of simplices of the appropriate sizes. Therefore, $\Delta$ itself can only be a $k$-skeleton of a simplex on $n$ vertices.

Finally, note that the homology of a $j$-skeleton of a simplex is concentrated in degree $j$, since in all degrees less than $j$ the simplicial chain complex agrees with that of a full simplex, which is contractible. Therefore, if $\Delta$ is the $k$-skeleton of a simplex, the Mayer--Vietoris exact sequence is zero outside the segment above, so the desired equality does indeed hold.
\end{proof}

The previous lemma tells us that ``being a skeleton of a simplex'' is a homological property, if dimension and number of vertices are fixed. Combining this with \cref{thm:Alexander-duality-for-coskeletons}, we obtain the following characterisations of neighbourly spheres in terms of their skeletons and their co-skeletons:

\begin{crl} \label{thm:neighbourly-characterisation}
Suppose $\Delta$ is a $d$-dimensional simplicial sphere with $n$ vertices. Then for $t = 1, \dotsc, d$, the following are equivalent:
\begin{itemize}
\item $\Delta$ is $t$-neighbourly,
\item $\rank \tilde H_{t-1}(\abs{\skel_{t-1} \Delta}; \mathbb Z) = \binom{n-1}{t}$,
\item $\rank \tilde H^{d-t}(\abs{\skel^c_{t-1} \Delta}; \mathbb Z) = \binom{n-1}{t}$.
\end{itemize}
\end{crl}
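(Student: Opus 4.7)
The plan is to split the three-way equivalence into two halves: the second and third statements are related purely by Alexander duality, while the first and second are related by \cref{thm:skel-simplex-maximises-homology}.

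For the equivalence of the second and third statements, I would directly invoke \cref{thm:Alexander-duality-for-coskeletons} with $k = t-1$ and $i = t-1$: it provides an isomorphism $\tilde H^{d-t}(\abs{\skel^c_{t-1} \Delta}; \mathbb Z) \cong \tilde H_{t-1}(\abs{\skel_{t-1} \Delta}; \mathbb Z)$, which immediately forces the two ranks to agree. This application is legitimate because, for every $1 \leq t \leq d$, the skeleton $\skel_{t-1} \Delta$ is a proper non-empty subcomplex of the $d$-sphere $\Delta$.

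For the equivalence of the first and second statements, I would apply \cref{thm:skel-simplex-maximises-homology} to the subcomplex $\skel_{t-1} \Delta$. Before doing so I would verify its hypotheses: it has exactly $n$ vertices (since $t \geq 1$ so every vertex of $\Delta$ survives into the skeleton), dimension exactly $t-1$ (since $\Delta$ is a pure $d$-sphere with $d \geq t$, so $(t-1)$-faces exist), and $t-1 < n$ (since $n \geq d+1 > t-1$). If $\Delta$ is $t$-neighbourly, then by definition $\skel_{t-1} \Delta$ is isomorphic to the $(t-1)$-skeleton of a simplex on $n$ vertices, so the equality clause of \cref{thm:skel-simplex-maximises-homology} yields $\rank \tilde H_{t-1}(\abs{\skel_{t-1} \Delta}; \mathbb Z) = \binom{n-1}{t}$. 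Conversely, if this rank equals $\binom{n-1}{t}$, the ``only if'' half of the same clause forces $\skel_{t-1} \Delta$ to be isomorphic to the $(t-1)$-skeleton of a simplex on $n$ vertices, which is exactly the condition that $\Delta$ is $t$-neighbourly.

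The proof is essentially bookkeeping once the two earlier tools are in hand; the only step requiring any care is confirming that the hypotheses of \cref{thm:skel-simplex-maximises-homology} genuinely apply to $\skel_{t-1} \Delta$ and that the degree indexing in \cref{thm:Alexander-duality-for-coskeletons} aligns with the target statement.
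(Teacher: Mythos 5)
Your proposal is correct and follows essentially the same route as the paper: it uses \cref{thm:skel-simplex-maximises-homology} to identify the second condition with $t$-neighbourliness and \cref{thm:Alexander-duality-for-coskeletons} (at $k=t-1$, $i=t-1$) to equate the second and third. Your added verification of the hypotheses of \cref{thm:skel-simplex-maximises-homology} and of the properness and non-emptiness of $\skel_{t-1}\Delta$ is sound and a welcome bit of extra care, but does not change the substance of the argument.
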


\begin{proof}
By definition, $\Delta$ is $t$-neighbourly if and only if $\skel_{t-1} \Delta$ is isomorphic to the $(t-1)$-skeleton of a simplex on $n$ vertices. \Cref{thm:skel-simplex-maximises-homology} says that this occurs if and only if $\rank \tilde H_{t-1}(\abs{\skel_{t-1} \Delta}; \mathbb Z) = \binom{n-1}{t}$. And \cref{thm:Alexander-duality-for-coskeletons} says that
\begin{align*}
\tilde H_{t-1}(\abs{\skel_{t-1} \Delta}; \mathbb Z) & \cong \tilde H^{d-t}(\abs{\skel^c_{t-1} \Delta}; \mathbb Z). \qedhere
\end{align*}
\end{proof}

\section{The key long exact sequence} \label{sec:LES}

We now reach the main tool of this paper, which we will make much use of in the upcoming sections.

\begin{thm} \label{thm:LES}
If $X$ is a $d$-dimensional polytopal complex and $0 \leq k \leq d$, then there is the following long exact sequence:
\begin{equation*}
	\begin{tikzcd}[column sep=small]
		\dotsb \rar & \bigoplus_{\sigma \in X_k} \tilde H_i(\abs{\lk_X \sigma}) \rar & \tilde H_i(\abs{\skel^c_k X}) \rar & \tilde H_i(\abs{\skel^c_{k-1} X}) \rar & \bigoplus_{\sigma \in X_k} \tilde H_{i-1}(\abs{\lk_X \sigma}) \rar & \dotsb.
	\end{tikzcd}
\end{equation*}
\end{thm}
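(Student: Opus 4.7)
The plan is to replace the co-skeletons with homotopy-equivalent simplicial subcomplexes via \cref{thm:coskel-simeq-bary}, then apply the long exact sequence of a CW pair after decomposing the quotient as a wedge of suspensions.

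Set $Y := \bary(X) - \bary(\skel_{k-1} X)$ and $Z := \bary(X) - \bary(\skel_k X)$, so that $\abs{Y} \simeq \abs{\skel^c_{k-1} X}$, $\abs{Z} \simeq \abs{\skel^c_k X}$, and $Z$ is the induced subcomplex of $Y$ obtained by deleting the vertices $\{v_\sigma : \sigma \in X_k\}$. The key observation is that these vertices are pairwise non-adjacent in $Y$, since a face of $Y$ is a chain of faces of $X$ of dimension $\geq k$, and no chain can contain two distinct $k$-dimensional faces. The same dimension bound shows that $\slk_Y v_\sigma = \bary(\lk_X \sigma)$: any chain that together with $\sigma$ remains a chain of faces of dimension $\geq k$ must consist entirely of faces strictly containing $\sigma$. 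Re-running the proof of \cref{thm:coskel-simeq-bary} with the induced subcomplex of faces of $X$ not properly containing $\sigma$ in place of $\bary(\skel_{k-1} X)$ then yields $\abs{\lk_X \sigma} \simeq \abs{\bary(\lk_X \sigma)} = \abs{\slk_Y v_\sigma}$.

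Next I apply the long exact sequence of the CW pair $(\abs{Y}, \abs{Z})$ together with the identification $H_i(\abs{Y}, \abs{Z}) \cong \tilde H_i(\abs{Y}/\abs{Z})$. For each $\sigma \in X_k$, the closed simplicial star of $v_\sigma$ in $Y$ equals the cone $v_\sigma * \slk_Y v_\sigma$ and meets $Z$ precisely in its base $\slk_Y v_\sigma$; by non-adjacency, distinct closed stars intersect only inside $Z$. Hence collapsing $\abs{Z}$ collapses each cone's base to a common basepoint while the apices $v_\sigma$ remain distinct, producing
\begin{equation*}
\abs{Y}/\abs{Z} \;\cong\; \bigvee_{\sigma \in X_k} \Sigma \abs{\slk_Y v_\sigma} \;\simeq\; \bigvee_{\sigma \in X_k} \Sigma \abs{\lk_X \sigma},
\end{equation*}
where $\Sigma$ denotes unreduced suspension. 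Reduced homology of a wedge of suspensions gives $H_i(\abs{Y}, \abs{Z}) = \bigoplus_\sigma \tilde H_{i-1}(\abs{\lk_X \sigma})$, and substituting into the long exact sequence of the pair produces the claimed sequence.

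The hard part will be establishing the wedge-of-suspensions decomposition rigorously: one must verify that no further identifications are forced by the collapse beyond the single basepoint, which rests on the non-adjacency of the $v_\sigma$'s and on each $\slk_Y v_\sigma$ lying entirely within $Z$. Edge cases such as $k$-facets (where $\lk_X \sigma = \emptyset$ contributes through $\tilde H_{-1}(\emptyset) = R$, consistent with $\Sigma \emptyset = S^0$) and the low-degree bookkeeping of reduced sequences should be checked separately but are routine.
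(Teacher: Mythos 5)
Your argument is correct and takes a genuinely different route from the paper's. The paper works directly inside $\abs{X}$ with a Mayer--Vietoris decomposition: it sets $A = \bigcup_{\sigma \in X_k} \abs{\st_{\bary(X)} v_\sigma}$ (a disjoint union of \emph{open} stars) and $B = \abs{\skel^c_k X}$, notes $A \cup B = \abs{\skel^c_{k-1} X}$ and $A \cap B = \bigsqcup_\sigma \abs{\st_{\bary(X)} v_\sigma} \setminus \abs{\sigma}$, then spends considerable effort converting the unreduced Mayer--Vietoris sequence to a reduced one (splitting into the cases where $\sigma$ is or is not a facet), and finally runs a \emph{second} Mayer--Vietoris argument to identify $\tilde H_i(\abs{\st_{\bary(X)} v_\sigma} \setminus \abs{\sigma})$ with $\tilde H_i(\abs{\lk_X \sigma})$. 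You instead pass to the barycentric models $Z \subset Y$, use the reduced long exact sequence of the CW pair $(\abs{Y}, \abs{Z})$, and compute $H_i(\abs{Y}, \abs{Z}) \cong \tilde H_i(\abs{Y}/\abs{Z})$ by exhibiting $\abs{Y}/\abs{Z}$ as a wedge of unreduced suspensions $\bigvee_\sigma \Sigma \abs{\slk_Y v_\sigma}$, using that the $v_\sigma$ are pairwise non-adjacent and that $\bar{\st}_Y v_\sigma \cap Z = \slk_Y v_\sigma$. The two approaches are dual in flavor (open cover vs.\ closed cofibration), and the link groups appear at the $A\cap B$ term in the paper and at the relative/quotient term in yours, accounting for the degree shift via the suspension isomorphism. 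Your approach buys cleaner bookkeeping on the reduced-homology conversion (the reduced LES of a pair handles it automatically, including the facet case via $\Sigma\emptyset = S^0$ and $\tilde H_{-1}(\emptyset) = R$), at the cost of having to justify the wedge-of-suspensions homeomorphism, which you have correctly identified as the crux; the continuous bijection between compact Hausdorff spaces argument you sketch does close it. One small point to make explicit in a full write-up: the $k = d$ case has $Z = \emptyset$, where the pair and quotient conventions ($\abs{Y}/\emptyset = \abs{Y}_+$) still produce the right answer, but it deserves a sentence; the paper treats this case separately up front.
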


\begin{proof}
We will begin with the case $k = d$. The link of any $d$-dimensional face is empty, as is $\skel^c_d X$. The $(d-1)$th co-skeleton $\skel^c_{d-1} X$ consists only of the interiors of the $d$-dimensional faces, so it retracts onto $f_d(X)$ points. Therefore, the non-zero part of the claimed long exact sequence looks like this:
\begin{equation*}
\begin{tikzcd}
0 \rar & \tilde H_0(\skel^c_{d-1}) \arrow[d, phantom, "\cong" rotate=-90] \rar & \bigoplus_{\sigma \in X_d} \tilde H_{-1}(\lk \sigma) \arrow[d, phantom, "\cong" rotate=-90] \rar & \tilde H_{-1}(\skel^c_d X) \arrow[d, phantom, "\cong" rotate=-90] \rar & 0. \\
& R^{f_d(X) - 1} & R^{f_d(X)} & R
\end{tikzcd}
\end{equation*}
These homology groups do indeed form an exact sequence, with appropriate maps. (We won't need to know what the maps are in the rest of this paper, just that they exist.)

For the remainder of this proof, we will assume that $k < d$. We will use the Mayer--Vietoris theorem. Define the sets $A$ and $B$ by:
\begin{align*}
A & \coloneqq \bigcup_{\sigma \in X_k} \abs{\st_{\bary(X)} v_\sigma}, \\
B & \coloneqq \abs{\skel^c_k X} = \abs{X} \setminus \abs{\skel_k X}.
\end{align*}
See \cref{fig:Mayer-Vietoris-k=0,fig:Mayer-Vietoris-k=1} for an example. Both $A$ and $B$ are open in $\abs{X}$: $A$ is a union of open sets, and $B$ is the complement of a closed set.

\begin{figure}
\centering
\begin{subfigure}{0.35\textwidth}
\centering
\begin{tikzpicture}[scale=0.8] 

\node (v1) [coordinate] at (0,0) {};
\node (v2) [coordinate] at (100:1.8) {};
\node (v3) [coordinate] at (-1.9,0.8) {};
\node (v4) [coordinate] at (-1.8,-0.9) {};
\node (v5) [coordinate] at (-0.1,-2.0) {};
\node (v6) [coordinate] at (10:1.8) {};
\node (v7) [coordinate] at ($(v6) + (100:1.8)$) {};

\foreach \i/\j in {1/2, 2/3, 3/4, 4/5, 1/5, 1/6, 5/6, 2/7, 6/7}
	{
	\node (v\i\j) [coordinate] at ($(v\i)!0.5!(v\j)$) {};
	\node (v\j\i) [coordinate] at ($(v\i)!0.5!(v\j)$) {};
	}
\node (v12345) [coordinate] at ($1/5*(v1)+1/5*(v2)+1/5*(v3)+1/5*(v4)+1/5*(v5)$) {};
\node (v156) [coordinate] at ($1/3*(v1)+1/3*(v5)+1/3*(v6)$) {};
\node (v1267) [coordinate] at ($1/4*(v1)+1/4*(v2)+1/4*(v6)+1/4*(v7)$) {};


\foreach \i/\ix/\iy/\f in {1/12/15/12345, 2/12/23/12345, 3/23/34/12345, 4/34/45/12345, 5/45/15/12345, 1/15/16/156, 5/15/56/156, 6/16/56/156, 1/12/16/1267, 2/12/27/1267, 7/27/67/1267, 6/16/67/1267}
	{
	\coordinate (v\ix-\i) at ($(v\ix)!0.2!(v\i)$);
	\coordinate (v\iy-\i) at ($(v\iy)!0.2!(v\i)$);
	\coordinate (v\f-\i) at ($(v\f)!0.2!(v\i)$);
	\fill [face] (v\ix-\i) -- (v\i) -- (v\iy-\i) -- (v\f-\i) -- cycle;
	\draw [edge, scale around={0.8:(v\i)}] (v\ix-\i) -- (v\i) -- (v\iy-\i);
	\draw [missing edge] (v\ix-\i) -- (v\f-\i) -- (v\iy-\i);
	}

\foreach \i in {1,...,7}
	\node [vertex] at (v\i) {};

\end{tikzpicture}
\caption{$A = \bigsqcup_{\sigma \in X_0} \abs{\st_{\bary(X)} v_\sigma}$}
\end{subfigure}
\begin{subfigure}{0.35\textwidth}
\centering
\begin{tikzpicture}[scale=0.8] 

\node (v1) [coordinate] at (0,0) {};
\node (v2) [coordinate] at (100:1.8) {};
\node (v3) [coordinate] at (-1.9,0.8) {};
\node (v4) [coordinate] at (-1.8,-0.9) {};
\node (v5) [coordinate] at (-0.1,-2.0) {};
\node (v6) [coordinate] at (10:1.8) {};
\node (v7) [coordinate] at ($(v6) + (100:1.8)$) {};

\foreach \i/\j in {1/2, 2/3, 3/4, 4/5, 1/5, 1/6, 5/6, 2/7, 6/7}
	\node (v\i\j) [coordinate] at ($(v\i)!0.5!(v\j)$) {};
\node (v12345) [coordinate] at ($1/5*(v1)+1/5*(v2)+1/5*(v3)+1/5*(v4)+1/5*(v5)$) {};
\node (v156) [coordinate] at ($1/3*(v1)+1/3*(v5)+1/3*(v6)$) {};
\node (v1267) [coordinate] at ($1/4*(v1)+1/4*(v2)+1/4*(v6)+1/4*(v7)$) {};

\filldraw [face] (v1)--(v2)--(v3)--(v4)--(v5)--cycle;
\filldraw [face] (v1)--(v5)--(v6)--cycle;
\filldraw [face] (v1)--(v2)--(v7)--(v6)--cycle;


\foreach \i in {1,...,7}
	\node [missing vertex] at (v\i) {};

\end{tikzpicture}
\caption{$B = \abs{\skel^c_0 X}$}
\end{subfigure}
\par\medskip
\begin{subfigure}{0.35\textwidth}
\centering
\begin{tikzpicture}[scale=0.8] 

\node (v1) [coordinate] at (0,0) {};
\node (v2) [coordinate] at (100:1.8) {};
\node (v3) [coordinate] at (-1.9,0.8) {};
\node (v4) [coordinate] at (-1.8,-0.9) {};
\node (v5) [coordinate] at (-0.1,-2.0) {};
\node (v6) [coordinate] at (10:1.8) {};
\node (v7) [coordinate] at ($(v6) + (100:1.8)$) {};

\foreach \i/\j in {1/2, 2/3, 3/4, 4/5, 1/5, 1/6, 5/6, 2/7, 6/7}
	{
	\node (v\i\j) [coordinate] at ($(v\i)!0.5!(v\j)$) {};
	\node (v\j\i) [coordinate] at ($(v\i)!0.5!(v\j)$) {};
	}
\node (v12345) [coordinate] at ($1/5*(v1)+1/5*(v2)+1/5*(v3)+1/5*(v4)+1/5*(v5)$) {};
\node (v156) [coordinate] at ($1/3*(v1)+1/3*(v5)+1/3*(v6)$) {};
\node (v1267) [coordinate] at ($1/4*(v1)+1/4*(v2)+1/4*(v6)+1/4*(v7)$) {};


\foreach \i/\ix/\iy/\f in {1/12/15/12345, 2/12/23/12345, 3/23/34/12345, 4/34/45/12345, 5/45/15/12345, 1/15/16/156, 5/15/56/156, 6/16/56/156, 1/12/16/1267, 2/12/27/1267, 7/27/67/1267, 6/16/67/1267}
	{
	\coordinate (v\ix-\i) at ($(v\ix)!0.2!(v\i)$);
	\coordinate (v\iy-\i) at ($(v\iy)!0.2!(v\i)$);
	\coordinate (v\f-\i) at ($(v\f)!0.2!(v\i)$);
	\fill [face] (v\ix-\i) -- (v\i) -- (v\iy-\i) -- (v\f-\i) -- cycle;
	\draw [edge, scale around={0.8:(v\i)}] (v\ix-\i) -- (v\i) -- (v\iy-\i);
	\draw [missing edge] (v\ix-\i) -- (v\f-\i) -- (v\iy-\i);
	}

\foreach \i in {1,...,7}
	\node [missing vertex] at (v\i) {};

\end{tikzpicture}
\caption{$A \cap B = \bigsqcup_{\sigma \in X_0} \abs{\st_{\bary(X)} v_\sigma} \setminus \abs{\sigma}$}
\end{subfigure}
\begin{subfigure}{0.35\textwidth}
\centering
\begin{tikzpicture}[scale=0.8] 

\node (v1) [coordinate] at (0,0) {};
\node (v2) [coordinate] at (100:1.8) {};
\node (v3) [coordinate] at (-1.9,0.8) {};
\node (v4) [coordinate] at (-1.8,-0.9) {};
\node (v5) [coordinate] at (-0.1,-2.0) {};
\node (v6) [coordinate] at (10:1.8) {};
\node (v7) [coordinate] at ($(v6) + (100:1.8)$) {};

\foreach \i/\j in {1/2, 2/3, 3/4, 4/5, 1/5, 1/6, 5/6, 2/7, 6/7}
	\node (v\i\j) [coordinate] at ($(v\i)!0.5!(v\j)$) {};
\node (v12345) [coordinate] at ($1/5*(v1)+1/5*(v2)+1/5*(v3)+1/5*(v4)+1/5*(v5)$) {};
\node (v156) [coordinate] at ($1/3*(v1)+1/3*(v5)+1/3*(v6)$) {};
\node (v1267) [coordinate] at ($1/4*(v1)+1/4*(v2)+1/4*(v6)+1/4*(v7)$) {};

\filldraw [face] (v1)--(v2)--(v3)--(v4)--(v5)--cycle;
\filldraw [face] (v1)--(v5)--(v6)--cycle;
\filldraw [face] (v1)--(v2)--(v7)--(v6)--cycle;


\foreach \i in {1,...,7}
	\node [vertex] at (v\i) {};

\end{tikzpicture}
\caption{$A \cup B = \abs{\skel^c_{-1} X} = \abs{X}$}
\end{subfigure}
\caption{The sets $A$, $B$, $A \cap B$ and $A \cup B$ in the proof of \cref{thm:LES}, when $k = 0$ and $X$ is the polytopal complex in \cref{fig:running-example}} \label{fig:Mayer-Vietoris-k=0}
\end{figure}

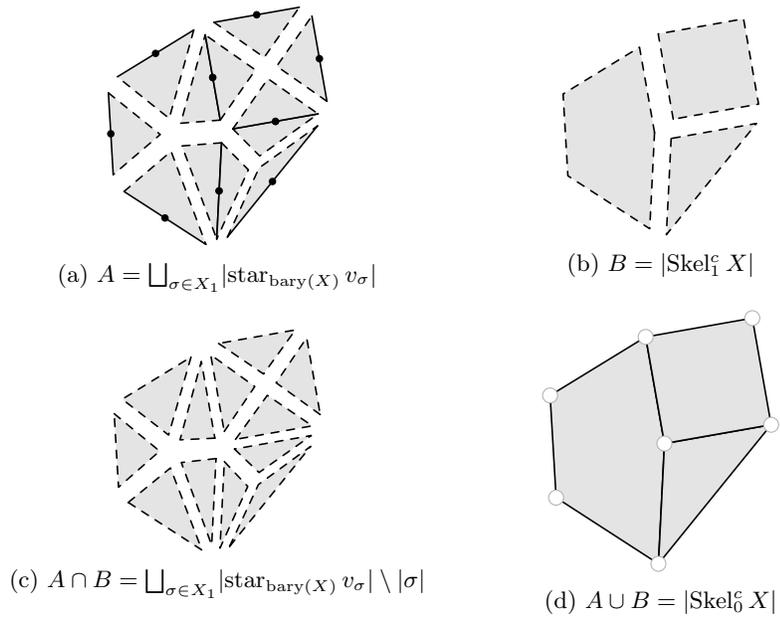
\begin{figure}
\centering
\begin{subfigure}{0.35\textwidth}
\centering
\begin{tikzpicture}[scale=0.8] 

\node (v1) [coordinate] at (0,0) {};
\node (v2) [coordinate] at (100:1.8) {};
\node (v3) [coordinate] at (-1.9,0.8) {};
\node (v4) [coordinate] at (-1.8,-0.9) {};
\node (v5) [coordinate] at (-0.1,-2.0) {};
\node (v6) [coordinate] at (10:1.8) {};
\node (v7) [coordinate] at ($(v6) + (100:1.8)$) {};

\foreach \i/\j in {1/2, 2/3, 3/4, 4/5, 1/5, 1/6, 5/6, 2/7, 6/7}
	{
	\node (v\i\j) [coordinate] at ($(v\i)!0.5!(v\j)$) {};
	}
\node (v12345) [coordinate] at ($1/5*(v1)+1/5*(v2)+1/5*(v3)+1/5*(v4)+1/5*(v5)$) {};
\node (v156) [coordinate] at ($1/3*(v1)+1/3*(v5)+1/3*(v6)$) {};
\node (v1267) [coordinate] at ($1/4*(v1)+1/4*(v2)+1/4*(v6)+1/4*(v7)$) {};


\foreach \x/\y/\f in {1/2/12345, 2/3/12345, 3/4/12345, 4/5/12345, 1/5/12345, 1/5/156, 1/6/156, 5/6/156, 1/2/1267, 2/7/1267, 6/7/1267, 1/6/1267}
	{
	\node (v\x-\x\y) [coordinate] at ($(v\x)!0.2!(v\x\y)$) {};
	\node (v\y-\x\y) [coordinate] at ($(v\y)!0.2!(v\x\y)$) {};
	\node (v\f-\x\y) [coordinate] at ($(v\f)!0.2!(v\x\y)$) {};
	\fill [face] (v\x-\x\y) -- (v\x\y) -- (v\y-\x\y) -- (v\f-\x\y) -- cycle;
	\draw [edge] (v\x-\x\y) -- (v\x\y) node [vertex] {} -- (v\y-\x\y);
	\draw [missing edge] (v\x-\x\y) -- (v\f-\x\y) -- (v\y-\x\y);
	}


\end{tikzpicture}
\caption{$A = \bigsqcup_{\sigma \in X_1} \abs{\st_{\bary(X)} v_\sigma}$}
\end{subfigure}
\begin{subfigure}{0.35\textwidth}
\centering
\begin{tikzpicture}[scale=0.8] 

\node (v1) [coordinate] at (0,0) {};
\node (v2) [coordinate] at (100:1.8) {};
\node (v3) [coordinate] at (-1.9,0.8) {};
\node (v4) [coordinate] at (-1.8,-0.9) {};
\node (v5) [coordinate] at (-0.1,-2.0) {};
\node (v6) [coordinate] at (10:1.8) {};
\node (v7) [coordinate] at ($(v6) + (100:1.8)$) {};

\foreach \i/\j in {1/2, 2/3, 3/4, 4/5, 1/5, 1/6, 5/6, 2/7, 6/7}
	\node (v\i\j) [coordinate] at ($(v\i)!0.5!(v\j)$) {};
\node (v12345) [coordinate] at ($1/5*(v1)+1/5*(v2)+1/5*(v3)+1/5*(v4)+1/5*(v5)$) {};
\node (v156) [coordinate] at ($1/3*(v1)+1/3*(v5)+1/3*(v6)$) {};
\node (v1267) [coordinate] at ($1/4*(v1)+1/4*(v2)+1/4*(v6)+1/4*(v7)$) {};


\foreach \i/\f in {1/12345, 2/12345, 3/12345, 4/12345, 5/12345, 1/156, 5/156, 6/156, 1/1267, 2/1267, 6/1267, 7/1267}
	\coordinate (v\i-\f) at ($(v\i)!0.2!(v\f)$);

\filldraw [face, missing edge] (v1-12345)--(v2-12345)--(v3-12345)--(v4-12345)--(v5-12345)--cycle;
\filldraw [face, missing edge] (v1-156)--(v5-156)--(v6-156)--cycle;
\filldraw [face, missing edge] (v1-1267)--(v2-1267)--(v7-1267)--(v6-1267)--cycle;



\end{tikzpicture}
\caption{$B = \abs{\skel^c_1 X}$}
\end{subfigure}
\par\medskip
\begin{subfigure}{0.35\textwidth}
\centering
\begin{tikzpicture}[scale=0.8] 

\node (v1) [coordinate] at (0,0) {};
\node (v2) [coordinate] at (100:1.8) {};
\node (v3) [coordinate] at (-1.9,0.8) {};
\node (v4) [coordinate] at (-1.8,-0.9) {};
\node (v5) [coordinate] at (-0.1,-2.0) {};
\node (v6) [coordinate] at (10:1.8) {};
\node (v7) [coordinate] at ($(v6) + (100:1.8)$) {};

\foreach \i/\j in {1/2, 2/3, 3/4, 4/5, 1/5, 1/6, 5/6, 2/7, 6/7}
	{
	\node (v\i\j) [coordinate] at ($(v\i)!0.5!(v\j)$) {};
	}
\node (v12345) [coordinate] at ($1/5*(v1)+1/5*(v2)+1/5*(v3)+1/5*(v4)+1/5*(v5)$) {};
\node (v156) [coordinate] at ($1/3*(v1)+1/3*(v5)+1/3*(v6)$) {};
\node (v1267) [coordinate] at ($1/4*(v1)+1/4*(v2)+1/4*(v6)+1/4*(v7)$) {};


\foreach \x/\y/\f in {1/2/12345, 2/3/12345, 3/4/12345, 4/5/12345, 1/5/12345, 1/5/156, 1/6/156, 5/6/156, 1/2/1267, 2/7/1267, 6/7/1267, 1/6/1267}
	{
	\node (v\x-\x\y-\f) [coordinate] at ($(v\x)!0.2!(v\x\y)!0.1!(v\f)$) {};
	\node (v\y-\x\y-\f) [coordinate] at ($(v\y)!0.2!(v\x\y)!0.1!(v\f)$) {};
	\node (v\f-\x\y) [coordinate] at ($(v\f)!0.2!(v\x\y)$) {};
	\fill [face] (v\x-\x\y-\f) -- (v\y-\x\y-\f) -- (v\f-\x\y) -- cycle;
	\draw [missing edge] (v\x-\x\y-\f) -- (v\f-\x\y) -- (v\y-\x\y-\f) -- cycle;
	}


\end{tikzpicture}
\caption{$A \cap B = \bigsqcup_{\sigma \in X_1} \abs{\st_{\bary(X)} v_\sigma} \setminus \abs{\sigma}$}
\end{subfigure}
\begin{subfigure}{0.35\textwidth}
\centering
\begin{tikzpicture}[scale=0.8] 

\node (v1) [coordinate] at (0,0) {};
\node (v2) [coordinate] at (100:1.8) {};
\node (v3) [coordinate] at (-1.9,0.8) {};
\node (v4) [coordinate] at (-1.8,-0.9) {};
\node (v5) [coordinate] at (-0.1,-2.0) {};
\node (v6) [coordinate] at (10:1.8) {};
\node (v7) [coordinate] at ($(v6) + (100:1.8)$) {};

\foreach \i/\j in {1/2, 2/3, 3/4, 4/5, 1/5, 1/6, 5/6, 2/7, 6/7}
	\node (v\i\j) [coordinate] at ($(v\i)!0.5!(v\j)$) {};
\node (v12345) [coordinate] at ($1/5*(v1)+1/5*(v2)+1/5*(v3)+1/5*(v4)+1/5*(v5)$) {};
\node (v156) [coordinate] at ($1/3*(v1)+1/3*(v5)+1/3*(v6)$) {};
\node (v1267) [coordinate] at ($1/4*(v1)+1/4*(v2)+1/4*(v6)+1/4*(v7)$) {};

\filldraw [face] (v1)--(v2)--(v3)--(v4)--(v5)--cycle;
\filldraw [face] (v1)--(v5)--(v6)--cycle;
\filldraw [face] (v1)--(v2)--(v7)--(v6)--cycle;


\foreach \i in {1,...,7}
	\node [missing vertex] at (v\i) {};

\end{tikzpicture}
\caption{$A \cup B = \abs{\skel^c_0 X}$}
\end{subfigure}
\caption{The sets $A$, $B$, $A \cap B$ and $A \cup B$ in the proof of \cref{thm:LES}, when $k = 1$ and $X$ is the polytopal complex in \cref{fig:running-example}} \label{fig:Mayer-Vietoris-k=1}
\end{figure}

First, we claim that the union in $A$ is in fact a \emph{disjoint} union. If two stars, say $\st_{\bary(X)} v_\sigma$ and $\st_{\bary(X)} v_\tau$, were to intersect, they would have a face of $\bary(X)$ in common, and by the definition of an open star, this face would need to contain both $v_\sigma$ and $v_\tau$. But faces in $\bary(X)$ correspond to chains in the face poset of $X$; therefore, no face in $\bary(X)$ can contain both $v_\sigma$ and $v_\tau$, since $\sigma$ and $\tau$ are both $k$-dimensional and are thus incomparable in the face poset. So $A$ is a disjoint union of stars. Since these stars are open, each one thus forms a connected component of $A$.

Next, let us consider $A \cap B$. Since $\st_{\bary(X)} v_\sigma$ does not intersect any faces of $X$ of dimension less than $k$, and the only $k$-dimensional face it intersects is $\sigma$, we conclude that
\begin{equation*}
	A \cap B = \bigsqcup_{\sigma \in X_k} \abs{\st_{\bary(X)} v_\sigma} \setminus \abs{\sigma}.
\end{equation*}
Now for $A \cup B$. Suppose $\tau$ is an arbitrary face of $X$. If $\dim \tau \leq k-1$, then $\abs \tau$ is disjoint from both $A$ and $B$; if $\dim \tau = k$, then $\abs \tau$ is a subset of $\abs{\st_{\bary(X)} v_\tau}$ hence $\abs \tau \subseteq A$, and if $\dim \tau \geq k+1$ then $\abs \tau$ is a subset of $B$. Therefore, $A \cup B$ consists of the relative interiors of all faces of dimension $k$ and greater; that is,
\begin{equation*}
	A \cup B = \abs{\skel^c_{k-1} X}.
\end{equation*}

We can now apply the Mayer--Vietoris theorem, with unreduced homology. This gives us the following long exact sequence:
{\small\begin{equation*}
	\begin{tikzcd}
		\dotsb \rar & H_i \bigg( \bigsqcup_{\sigma \in X_k} \abs{\st_{\bary(X)} v_\sigma} \setminus \abs{\sigma} \bigg) \rar & H_i \bigg( \bigsqcup_{\sigma \in X_k} \abs{\st_{\bary(X)} v_\sigma} \bigg) \oplus H_i(\abs{\skel^c_k X}) \rar & H_i(\abs{\skel^c_{k-1} X}) \rar & \dotsb.
	\end{tikzcd}
\end{equation*}}
Since unreduced homology of a disconnected space can be decomposed as a direct sum over components, we can simplify this long exact sequence:
{\small\begin{equation*}
	\begin{tikzcd}[column sep=small]
		\dotsb \rar & \bigoplus_{\sigma \in X_k} H_i \big( \abs{\st_{\bary(X)} v_\sigma} \setminus \abs{\sigma} \big) \rar & \bigg( \bigoplus_{\sigma \in X_k} H_i(\abs{\st_{\bary(X)} v_\sigma}) \bigg) \oplus H_i(\abs{\skel^c_k X}) \rar & H_i(\abs{\skel^c_{k-1} X}) \rar & \dotsb.
	\end{tikzcd}
\end{equation*}}

While this is a perfectly good long exact sequence, there are some steps we can do to ``clean it up'' and make it more convenient for later proofs. The first step is to convert from unreduced to reduced homology, which affects this sequence in degrees $0$ and $-1$. For each face $\sigma \in X_k$, there are two cases to consider: either $\sigma$ is not a facet, or it is.

If $\sigma$ is not a facet of $X$, then $\abs{\st_{\bary(X)} v_\sigma} \setminus \abs{\sigma}$ is non-empty, so $H_0(\abs{\st_{\bary(X)} v_\sigma} \setminus \abs{\sigma})$ is non-zero. The generators of a degree $0$ homology group are homotopy equivalence classes of maps from the $0$-simplex (i.e., a point) to the space in question; therefore, since the first map in our long exact sequence is induced by the inclusion of topological spaces, the map sends each generator of $H_0(\abs{\st_{\bary(X)} v_\sigma} \setminus \abs{\sigma})$ to a generator of $H_0(\abs{\st_{\bary(X)} v_\sigma}) \cong R$. Therefore, since homology groups in degree $0$ are always free, we may remove one summand of $R$ from each of these two homology groups. The result is that the unreduced homology group $H_0(\abs{\st_{\bary(X)} v_\sigma} \setminus \abs{\sigma})$ becomes reduced, and since $\abs{\st_{\bary(X)} v_\sigma}$ is contractible, its homology is zero outside degree $0$, so we can omit the $H_i(\abs{\st_{\bary(X)} v_\sigma})$ term. This holds for every $k$-face $\sigma$ that is not a facet.

What if $\sigma$ is a facet? In this case, $\abs{\st_{\bary(X)} v_\sigma} = \abs{\sigma}$, so $\abs{\st_{\bary(X)} v_\sigma} \setminus \abs{\sigma}$ is empty, hence its degree $0$ homology is zero in both the unreduced and the reduced form. However, its degree $(-1)$ reduced homology becomes non-zero. Therefore, we can do the following two actions:
\begin{itemize}
	\item First, the map from $H_0(\abs{\st_{\bary(X)} v_\sigma}) \cong R$ to $H_0(\abs{\skel^c_{k-1} X})$ takes generators to generators, so we can remove one summand of $R$ from each, by the same reasoning as above.
	\item Second, we can add a copy of the sequence $\dotsb \to 0 \to R \to R \to 0 \to \dotsb$ to our sequence, adding one summand of $R$ to each of $H_0(\abs{\skel^c_{k-1} X})$ and $H_{-1}(\abs{\st_{\bary(X)} v_\sigma} \setminus \abs{\sigma})$.
\end{itemize}
The result is the following:
\begin{itemize}
	\item The $H_i(\abs{\st_{\bary(X)} v_\sigma})$ term is removed (since it is zero when $i \neq 0$),
	\item No net change occurs to $H_0(\abs{\skel^c_{k-1} X})$, and
	\item The $H_{-1}(\abs{\st_{\bary(X)} v_\sigma} \setminus \abs{\sigma})$ term turns from $0$ to $R$, effectively converting from unreduced to reduced homology.
\end{itemize}

There is one more change to make in converting from unreduced to reduced homology: the map from $H_0(\abs{\skel^c_k X})$ to $H_0(\abs{\skel^c_{k-1} X})$ is induced by the inclusion of non-empty topological spaces (since $k < d$), so by a similar argument, we may replace both of these homology groups with reduced homology.

Our long exact sequence then becomes:
\begin{equation*}
	\begin{tikzcd}[column sep=small]
		\dotsb \rar & \bigoplus_{\sigma \in X_k} \tilde H_i \big( \abs{\st_{\bary(X)} v_\sigma} \setminus \abs{\sigma} \big) \rar & \tilde H_i(\abs{\skel^c_k X}) \rar & \tilde H_i(\abs{\skel^c_{k-1} X}) \rar & \dotsb.
	\end{tikzcd}
\end{equation*}

There is one last thing to do to this long exact sequence: simplifying $\tilde H_i \big( \abs{\st_{\bary(X)} v_\sigma} \setminus \abs{\sigma} \big)$. To do this, let us call on the Mayer--Vietoris theorem again. Assume that $\sigma$ is not a facet, so $\abs{\st_{\bary(X)} v_\sigma} \setminus \abs{\sigma}$ is non-empty. Define two new spaces:
\begin{align*}
A' & \coloneqq \abs{\st_{\bary(X)} v_\sigma}, \\
B' & \coloneqq \abs{\st_X \sigma} \setminus \abs{\sigma} = \abs{\lk_X \sigma}
\end{align*}

\begin{figure}
\centering
\begin{subfigure}{0.3\textwidth}
\centering
\begin{tikzpicture}[scale=0.8]

\node (v1) [coordinate] at (0,0) {};
\node (v2) [coordinate] at (100:1.8) {};
\node (v3) [coordinate] at (-1.9,0.8) {};
\node (v4) [coordinate] at (-1.8,-0.9) {};
\node (v5) [coordinate] at (-0.1,-2.0) {};
\node (v6) [coordinate] at (10:1.8) {};
\node (v7) [coordinate] at ($(v6) + (100:1.8)$) {};

\foreach \i/\j in {1/2, 2/3, 3/4, 4/5, 1/5, 1/6, 5/6, 2/7, 6/7}
	\node (v\i\j) [coordinate] at ($(v\i)!0.5!(v\j)$) {};
\node (v12345) [coordinate] at ($1/5*(v1)+1/5*(v2)+1/5*(v3)+1/5*(v4)+1/5*(v5)$) {};
\node (v156) [coordinate] at ($1/3*(v1)+1/3*(v5)+1/3*(v6)$) {};
\node (v1267) [coordinate] at ($1/4*(v1)+1/4*(v2)+1/4*(v6)+1/4*(v7)$) {};

\filldraw [face] (v1)--(v2)--(v3)--(v4)--(v5)--cycle;
\filldraw [face, fill=black!40] (v1)--(v5)--(v6)--cycle;
\filldraw [face] (v1)--(v2)--(v7)--(v6)--cycle;

\foreach \i in {1,...,7}
	\node [vertex] at (v\i) {};

\node [vertex, inner sep=1.5pt, pin=above left:$\sigma_1$] at (v3) {};

\draw [edge, line width=1.8pt] (v6)--(v7) node [coordinate, midway, pin=right:$\sigma_2$] {};

\node [coordinate, pin=below right:$\sigma_3$] at (v156) {};

\end{tikzpicture}
\caption{Three faces of $X$}
\end{subfigure}
\begin{subfigure}{0.6\textwidth}
\centering
\begin{tikzpicture}[scale=0.8]

\node (v1) [coordinate] at (0,0) {};
\node (v2) [coordinate] at (100:1.8) {};
\node (v3) [coordinate] at (-1.9,0.8) {};
\node (v4) [coordinate] at (-1.8,-0.9) {};
\node (v5) [coordinate] at (-0.1,-2.0) {};
\node (v6) [coordinate] at (10:1.8) {};
\node (v7) [coordinate] at ($(v6) + (100:1.8)$) {};

\foreach \i/\j in {1/2, 2/3, 3/4, 4/5, 1/5, 1/6, 5/6, 2/7, 6/7}
	\coordinate (v\i\j) at ($(v\i)!0.5!(v\j)$);
\coordinate (v12345) at ($1/5*(v1)+1/5*(v2)+1/5*(v3)+1/5*(v4)+1/5*(v5)$);
\coordinate (v156) at ($1/3*(v1)+1/3*(v5)+1/3*(v6)$);
\coordinate (v1267) at ($1/4*(v1)+1/4*(v2)+1/4*(v6)+1/4*(v7)$);

\fill [face] (v3)--(v23)--(v12345)--(v34)--cycle;
\fill [face] (v6)--(v1267)--(v7)--cycle;
\fill [face] (v1)--(v5)--(v6)--cycle;

\draw [sketch edge] (v1)--(v2)--(v3)--(v4)--(v5)--cycle;
\draw [sketch edge] (v5)--(v6)--(v1);
\draw [sketch edge] (v2)--(v7)--(v6);

\foreach \i in {1,...,5,12,23,34,45,15}
	\draw [sketch edge] (v\i)--(v12345);
\foreach \i in {1,5,6,15,16,56}
	\draw [sketch edge] (v\i)--(v156);
\foreach \i in {1,2,6,7,12,27,67,16}
	\draw [sketch edge] (v\i)--(v1267);

\draw [edge] (v23)--(v3)--(v34);
\draw [missing edge] (v23)--(v12345)--(v34);
\draw [edge] (v6)--(v7);
\draw [missing edge] (v6)--(v1267)--(v7);
\draw [missing edge] (v1)--(v5)--(v6)--cycle;

\node [vertex, pin=above left:$\abs{\st_{\bary(X)} v_{\sigma_1}}$] at (v3) {};
\node [vertex, pin=right:$\abs{\st_{\bary(X)} v_{\sigma_2}}$] at (v67) {};
\node [vertex, pin=below right:$\abs{\st_{\bary(X)} v_{\sigma_3}}$] at (v156) {};

\end{tikzpicture}
\caption{$\abs{\st_{\bary(X)} v_\sigma}$ for $\sigma = \sigma_1, \sigma_2, \sigma_3$}
\end{subfigure}
\par\medskip
\begin{subfigure}{0.6\textwidth}
\centering
\begin{tikzpicture}[scale=0.8]

\node (v1) [coordinate] at (0,0) {};
\node (v2) [coordinate] at (100:1.8) {};
\node (v3) [coordinate] at (-1.9,0.8) {};
\node (v4) [coordinate] at (-1.8,-0.9) {};
\node (v5) [coordinate] at (-0.1,-2.0) {};
\node (v6) [coordinate] at (10:1.8) {};
\node (v7) [coordinate] at ($(v6) + (100:1.8)$) {};

\foreach \i/\j in {1/2, 2/3, 3/4, 4/5, 1/5, 1/6, 5/6, 2/7, 6/7}
	\node (v\i\j) [coordinate] at ($(v\i)!0.5!(v\j)$) {};
\node (v12345) [coordinate] at ($1/5*(v1)+1/5*(v2)+1/5*(v3)+1/5*(v4)+1/5*(v5)$) {};
\node (v156) [coordinate] at ($1/3*(v1)+1/3*(v5)+1/3*(v6)$) {};
\node (v1267) [coordinate] at ($1/4*(v1)+1/4*(v2)+1/4*(v6)+1/4*(v7)$) {};

\foreach \i/\f in {1/12345, 2/12345, 3/12345, 4/12345, 5/12345, 12/12345, 23/12345, 34/12345, 45/12345, 15/12345, 1/156, 5/156, 6/156, 15/156, 16/156, 56/156, 1/1267, 2/1267, 6/1267, 7/1267, 12/1267, 27/1267, 67/1267, 16/1267}
	\coordinate (v\i-\f) at ($(v\i)!0.2!(v\f)$);



\fill [face] (v1-12345)--(v2-12345)--(v3-12345)--(v4-12345)--(v5-12345)--cycle;
\fill [face] (v1-156)--(v5-156)--(v6-156)--cycle;
\fill [face] (v1-1267)--(v2-1267)--(v7-1267)--(v6-1267)--cycle;

\draw [edge] (v2-12345)--(v3-12345)--(v4-12345);
\draw [missing edge] (v4-12345)--(v5-12345)--(v1-12345)--(v2-12345);
\draw [edge] (v6-1267)--(v7-1267);
\draw [missing edge] (v6-1267)--(v1-1267)--(v2-1267)--(v7-1267);
\draw [missing edge] (v1-156)--(v5-156)--(v6-156)--cycle;

\node [vertex, pin=above left:$\abs{\st_X \sigma_1}$] at (v3-12345) {};
\node [coordinate, pin=right:$\abs{\st_X \sigma_2}$] at (v67-1267) {};
\node [coordinate, pin=below right:$\abs{\st_X \sigma_3}$] at (v156) {};

\end{tikzpicture}
\caption{$\abs{\st_X \sigma}$ for $\sigma = \sigma_1, \sigma_2, \sigma_3$}	
\end{subfigure}

\caption{The spaces $\abs{\st_{\bary(X)} v_\sigma}$ and $\abs{\st_X \sigma}$ in the proof of \cref{thm:LES}, for various choices of $\sigma$} \label{fig:LES-stars}
\end{figure}

Since $\abs{\sigma} \subseteq \abs{\st_{\bary(X)} v_\sigma} \subseteq \abs{\st_X \sigma}$ --- for example, see \cref{fig:LES-stars} --- the intersection $A' \cap B'$ is $\abs{\st_{\bary(X)} v_\sigma} \setminus \abs{\sigma}$, and the union $A' \cup B'$ is simply $\abs{\st_X \sigma}$. Therefore, the Mayer--Vietoris long exact sequence with reduced homology for $A'$ and $B'$ is the following:
\begin{multline*}
	\begin{tikzcd}[ampersand replacement=\&]
		\dotsb \rar \& \tilde H_{i+1}(\abs{\st_X \sigma}) \rar \& \tilde H_i(\abs{\st_{\bary(X)} v_\sigma} \setminus \abs{\sigma})
	\end{tikzcd} \\
	\begin{tikzcd}[ampersand replacement=\&]
		{} \rar \& \tilde H_i(\abs{\st_{\bary(X)} v_\sigma}) \oplus \tilde H_i(\abs{\st_X \sigma} \setminus \abs{\sigma}) \rar \& \tilde H_i(\abs{\st_X \sigma}) \rar \& \dotsb.
	\end{tikzcd}
\end{multline*}
But stars of non-empty faces are contractible, so this becomes:
\begin{equation*}
	\begin{tikzcd}
		\dotsb \rar & 0 \rar & \tilde H_i(\abs{\st_{\bary(X)} v_\sigma} \setminus \abs{\sigma}) \rar & \tilde H_i(\abs{\st_X \sigma} \setminus \abs{\sigma}) \rar & 0 \rar & \dotsb.
	\end{tikzcd}
\end{equation*}
Therefore, $\tilde H_i(\abs{\st_{\bary(X)} v_\sigma} \setminus \abs{\sigma}) \cong \tilde H_i(\abs{\st_X \sigma} \setminus \abs{\sigma}) = \tilde H_i(\abs{\lk_X \sigma})$ for all non-facets $\sigma$. Finally, note that if $\sigma$ is a facet, this isomorphism still holds, since all spaces involved are empty.

Hence we obtain the desired long exact sequence.
\end{proof}

\section{Complexes characterised by link conditions} \label{sec:families}

Now that we have a long exact sequence relating co-skeletons and links, in this section we will consider some particular families of polytopal complexes that are characterised by links of faces having zero homology in some degrees.

\subsection{Cohen--Macaulay complexes}

A pure, $d$-dimensional polytopal complex $X$ is \emph{Cohen--Macaulay} if for every face $\sigma$, including $\sigma = \emptyset$, and every $i < d - \dim \sigma - 1$, the homology $\tilde H_i(\abs{\lk_\Delta \sigma})$ is $0$. If $\Delta \coloneqq X$ is a simplicial complex, this condition is equivalent to the Stanley--Reisner ring of $\Delta$ being a Cohen--Macaulay ring \citep[p.~1855]{art:Bjorner}.

\begin{rmk} \label{thm:CM-topological}
Cohen--Macaulayness is also a topological condition: $X$ is Cohen--Macaulay if and only if $\tilde H_i(\abs{X})$ and $\tilde H_i(\abs{X}, \abs{X} \setminus \{p\})$ are both $0$ for all $i < \dim X$ and all points $p \in \abs{X}$. This was proved by Munkres \citep[Corollary~3.4]{art:Munkres-TRC} for simplicial complexes, but a similar proof applies to polytopal complexes as well, with the following modified version of \citet[Lemma~3.3]{art:Munkres-TRC}:
\end{rmk}
\begin{lma}
If $X$ is a polytopal complex, $\sigma$ a face of $X$ and $p$ a point in the relative interior of $\sigma$, then $\tilde H_i(\abs{\lk_X \sigma}) \cong \tilde H_{i+\dim \sigma}(\abs{\lk_{\bary(X)} v_\sigma}) \cong \tilde H_{i + \dim \sigma + 1}(\abs{X}, \abs{X} \setminus \{p\})$.
\end{lma}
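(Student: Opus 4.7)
The plan is to handle the two isomorphisms separately. For the right-hand isomorphism I begin by noting that the local homology $\tilde H_n(\abs X, \abs X \setminus \{p\})$ does not depend on the choice of $p$ in the relative interior of $\sigma$: any two such points are exchanged by a self-homeomorphism of $\abs X$ supported inside $\abs \sigma$, so I may take $p = v_\sigma$. Let $U \coloneqq \abs{\st_{\bary(X)} v_\sigma}$, which is an open neighbourhood of $v_\sigma$ in $\abs X$. Excising the closed set $\abs X \setminus U$, which avoids $v_\sigma$, yields
\begin{equation*}
H_n(\abs X, \abs X \setminus \{v_\sigma\}) \cong H_n(U, U \setminus \{v_\sigma\}).
\end{equation*}
By the definitions of open star and of the paper's link, $U \setminus \{v_\sigma\} = \abs{\lk_{\bary(X)} v_\sigma}$. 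Since open stars are contractible, the long exact sequence of the pair collapses to $H_n(U, U \setminus \{v_\sigma\}) \cong \tilde H_{n-1}(\abs{\lk_{\bary(X)} v_\sigma})$, and setting $n = i + \dim \sigma + 1$ produces the right-hand isomorphism.

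For the left-hand isomorphism, the preliminary remark that the paper's link is homotopy equivalent to the simplicial link lets me work with $\slk_{\bary(X)} v_\sigma$ instead. A face of this simplicial link is a chain of faces of $X$ each comparable to, but distinct from, $\sigma$, and every such chain splits uniquely into a subchain of proper faces of $\sigma$ and a subchain of proper cofaces, giving the join decomposition
\begin{equation*}
\slk_{\bary(X)} v_\sigma = \bary(\partial \sigma) * L_\sigma,
\end{equation*}
where $L_\sigma$ denotes the order complex of $\{\tau \in X : \tau \supsetneq \sigma\}$. On geometric realisations, $\abs{\bary(\partial \sigma)} = \abs{\partial \sigma}$ is a $(\dim \sigma - 1)$-sphere, and the join of any space with $S^{n-1}$ is homotopy equivalent to its $n$-fold suspension, so $\abs{\slk_{\bary(X)} v_\sigma} \simeq \Sigma^{\dim \sigma} \abs{L_\sigma}$. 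Since suspension shifts reduced homology by one degree, $\tilde H_{i + \dim \sigma}(\abs{\lk_{\bary(X)} v_\sigma}) \cong \tilde H_i(\abs{L_\sigma})$.

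It remains to establish $\abs{\lk_X \sigma} \simeq \abs{L_\sigma}$, which is the main obstacle I anticipate: it is the polytopal analogue of the fact that the order complex of the coface poset computes the simplicial link, but confirming it requires careful bookkeeping about how $\bary(X)$-simplices sit inside the polytopal cells of $X$. I plan to apply the Nerve Theorem (\cref{thm:nerve}) to the cover $\{\abs{\st_{\bary(X)} v_\tau}\}_{\tau \supsetneq \sigma}$. Each open star is contractible; each lies in $\abs{\lk_X \sigma}$, because a point in $\abs{\st_{\bary(X)} v_\tau}$ sits inside a $\bary(X)$-simplex with top face $\tau_{\max} \supseteq \tau \supsetneq \sigma$ and so lies in $\abs{\tau_{\max}}$; and the union covers $\abs{\lk_X \sigma}$, since a point in an open cell $\abs \tau$ with $\tau \supsetneq \sigma$ lies in a $\bary(X)$-simplex whose chain ends at $\tau$ and hence contains $v_\tau$. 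An intersection $\bigcap_{\tau \in S} \abs{\st_{\bary(X)} v_\tau}$ equals the open star of the simplex $\{v_\tau : \tau \in S\}$ when $S$ is a chain in the face poset of $X$, and is empty otherwise, so every non-empty intersection is contractible. The nerve of this cover is therefore precisely $L_\sigma$, and the Nerve Theorem delivers the desired homotopy equivalence. The remaining ingredients — excision with the long exact sequence of a contractible pair, the combinatorial join decomposition, and the sphere-join-suspension identity — are all standard.
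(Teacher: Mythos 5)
Your proposal is correct, and it reaches the same intermediate complex as the paper by a genuinely different route. Both you and the paper reduce the left-hand isomorphism to the same join decomposition $\slk_{\bary(X)} v_\sigma = \bary(\partial\sigma) * L_\sigma$ together with the homology shift for a join with a sphere (Munkres Theorem~62.5), where $L_\sigma = \slk_{\bary(X)} v_\sigma - \bary(\partial\sigma)$ is the order complex of proper cofaces of $\sigma$. The difference is in how $\abs{\lk_X\sigma}$ is identified with $\abs{L_\sigma}$. The paper reuses the computation $\tilde H_i(\abs{\lk_X \sigma}) \cong \tilde H_i(\abs{\st_{\bary(X)} v_\sigma} \setminus \abs{\sigma})$ already established in the proof of \cref{thm:LES}, rewrites the right side as $\abs{\slk_{\bary(X)}v_\sigma}\setminus\abs{\bary(\partial\sigma)}$, and then applies \cref{thm:deletion-homotopy-equivalence} (Munkres Lemma~70.1) to land on $\abs{L_\sigma}$. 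You instead apply the Nerve Theorem directly to the open cover $\{\abs{\st_{\bary(X)} v_\tau}\}_{\tau\supsetneq\sigma}$ of $\abs{\lk_X\sigma}$, and your verification that the pieces cover, lie inside, and have contractible or empty intersections is correct; the nerve is precisely $L_\sigma$. Your approach is self-contained (it does not lean on the Mayer--Vietoris argument buried inside the proof of \cref{thm:LES}), at the cost of re-deriving a fact the paper already has in hand. For the right-hand isomorphism, the paper simply cites Munkres Lemma~3.3 for the simplicial complex $\bary(X)$, whereas you reprove it from scratch via excision of $\abs X\setminus U$ and the long exact sequence of the pair $(U, U\setminus\{v_\sigma\})$; this is the standard proof of that lemma, and your observation that $U\setminus\{v_\sigma\}=\abs{\lk_{\bary(X)}v_\sigma}$ uses the paper's open-link convention correctly. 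You also frontload the independence of the local homology on the choice of $p$ in the relative interior, which the paper defers to a remark at the end; both handlings are fine.
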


\begin{proof}
We saw in the proof of \cref{thm:LES} that
\begin{align*}
\tilde H_i(\abs{\lk_X \sigma}) & \cong \tilde H_i(\abs{\st_{\bary(X)} v_\sigma} \setminus \abs{\sigma}) \\
& = \tilde H_i \big( \abs{\lk_{\bary(X)} v_\sigma} \setminus (\abs{\sigma} \setminus \{v_\sigma\}) \big) \\
& \cong \tilde H_i(\abs{\slk_{\bary(X)} v_\sigma} \setminus \abs{\bary(\partial \sigma)}),
\end{align*}
where ``$\slk$'' denotes the simplicial link. Since $\bary(\partial \sigma)$ is an induced subcomplex of the simplicial complex $\slk_{\bary(X)} v_\sigma$, \cref{thm:deletion-homotopy-equivalence} says that
\begin{align*}
\abs{\slk_{\bary(X)} v_\sigma} \setminus \abs{\bary(\partial \sigma)} & \simeq \abs{\slk_{\bary(X)} v_\sigma - \bary(\partial \sigma)}.
\end{align*}
It is not hard to check that
\begin{align*}
\slk_{\bary(X)} v_\sigma & = \big( \slk_{\bary(X)} v_\sigma - \bary(\partial \sigma) \big) * \bary(\partial \sigma),
\end{align*}
where $*$ denotes simplicial join \citep[\S{}62]{book:Munkres-EAT}, so \citet[Theorem~62.5]{book:Munkres-EAT} implies that
\begin{align*}
\tilde H_i(\abs{\slk_{\bary(X)} v_\sigma - \bary(\partial \sigma)}) & \cong \tilde H_{i+\dim \sigma}(\abs{\slk_{\bary(X)} v_\sigma}).
\end{align*}
Finally, since $\bary(X)$ is a simplicial complex, we can apply \citet[Lemma~3.3]{art:Munkres-TRC} to conclude that
\begin{align*}
\tilde H_{i+\dim \sigma}(\abs{\slk_{\bary(X)} v_\sigma}) & \cong \tilde H_{i+\dim \sigma + 1}(\abs{X}, \abs{X} \setminus \{v_\sigma\}),
\end{align*}
and note that although $v_\sigma$ is typically defined to be the barycentre of $\sigma$, it can be chosen to be any point in the relative interior of $\sigma$.
\end{proof}

The following theorem allows us to determine whether a complex is Cohen--Macaulay by considering its co-skeletons.

\begin{thm} \label{thm:CM-characterisation}
A $d$-dimensional polytopal complex $X$ is Cohen--Macaulay if and only if it is pure and $\tilde H_i(\abs{\skel^c_k X}) = 0$ for all $i < d-k-1$ and all $k = -1, \dotsc, d$.
\end{thm}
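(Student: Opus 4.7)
My plan is to use the long exact sequence of \cref{thm:LES}, applied in both directions, with induction on $k$ for the forward direction and a direct read-off for the reverse direction.

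\textbf{Forward direction.} Assume $X$ is Cohen--Macaulay. Purity is immediate from the definition. For the homology vanishing, I will induct on $k$ from $k=-1$ up to $k=d$. For the base case $k=-1$, we have $\abs{\skel^c_{-1} X} = \abs{X}$ (up to a null-homotopic difference coming from the empty face), and the CM condition applied to $\sigma = \emptyset$ (with the convention $\dim \emptyset = -1$) gives $\tilde H_i(\abs{X}) = 0$ for $i < d$, which is precisely the claim for $k=-1$. For the inductive step, I consider the relevant segment of the long exact sequence,
\begin{equation*}
\bigoplus_{\sigma \in X_k} \tilde H_i(\abs{\lk \sigma}) \longrightarrow \tilde H_i(\abs{\skel^c_k X}) \longrightarrow \tilde H_i(\abs{\skel^c_{k-1} X}).
\end{equation*}
For $i < d-k-1$, the left term vanishes because each $k$-face $\sigma$ satisfies $\dim \sigma = k$, and the CM hypothesis applied to $\sigma$ gives $\tilde H_i(\abs{\lk \sigma}) = 0$ whenever $i < d - \dim \sigma - 1 = d-k-1$. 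The right term vanishes by the inductive hypothesis (since $i < d-k-1 < d-k$). Exactness then forces the middle term to vanish.

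\textbf{Reverse direction.} Assume $X$ is pure and $\tilde H_i(\abs{\skel^c_k X}) = 0$ for all $i < d-k-1$ and all $k$. I need to verify the link condition of the CM definition for every face $\sigma$. The case $\sigma = \emptyset$ is exactly the hypothesis at $k=-1$. For $\sigma$ of dimension $k \geq 0$, I use the segment
\begin{equation*}
\tilde H_{i+1}(\abs{\skel^c_{k-1} X}) \longrightarrow \bigoplus_{\tau \in X_k} \tilde H_i(\abs{\lk \tau}) \longrightarrow \tilde H_i(\abs{\skel^c_k X}).
\end{equation*}
For $i < d-k-1$, the right term vanishes by hypothesis, and the left term vanishes because $i+1 < d-k = d-(k-1)-1$, so again by hypothesis. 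Exactness makes the middle direct sum zero, and since $\tilde H_i(\abs{\lk \sigma})$ appears as a summand, it too must vanish. This gives the required link condition for every $\sigma \in X_k$, for every $k \geq 0$.

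\textbf{Potential obstacle.} The argument itself is essentially just an index-chase in the long exact sequence, so the main thing to be careful about is the bookkeeping at the boundary indices. I need to ensure that the LES as stated in \cref{thm:LES} really covers the ranges $k=-1$ (where $\skel^c_{-1} X = X$ and the sequence is not directly invoked, but is implicitly handled via the base case) and $k=d$ (where both $\skel^c_d X$ and $\lk \sigma$ for facets $\sigma$ are empty, and the vanishing conditions become vacuous because the inequality $i < d-k-1$ has no valid $i \geq -1$ at $k=d$ except $i=-1$, which fails the strict inequality). Once these endpoints are handled correctly, the induction and the direct exactness argument both go through cleanly.
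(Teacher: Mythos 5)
Your proof is correct and takes essentially the same approach as the paper: the base case $k=-1$ is handled via $\abs{\skel^c_{-1}X}=\abs{X}=\abs{\lk_X\emptyset}$, and both directions use the same segments of the long exact sequence from \cref{thm:LES}, with the same index bookkeeping. The only cosmetic difference is that you explicitly note purity up front, whereas the paper leaves it implicit.
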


Note that this theorem combined with \cref{thm:coskel-dimension} means that the homology of $\abs{\skel^c_k X}$ is entirely concentrated in degree $d-k-1$.

\begin{proof}[Proof of \cref{thm:CM-characterisation}]
To begin with, assume $X$ is Cohen--Macaulay. We will prove that $\tilde H_i(\abs{\skel^c_k X}) = 0$ for $i < d-k-1$ by induction on $k$.

For the base case, when $k = -1$, we have
\begin{equation*}
\tilde H_i(\abs{\skel^c_{-1} X}) = \tilde H_i(\abs{X}) = \tilde H_i(\abs{\lk_X \emptyset}),
\end{equation*}
which is zero for all $i < d$ since $X$ is Cohen--Macaulay.

Now, suppose $k > -1$, and consider this part of the long exact sequence from \cref{thm:LES}:
\begin{equation*}
\begin{tikzcd}
\bigoplus_{\sigma \in X_k} \tilde H_i(\abs{\lk \sigma}) \rar & \tilde H_i(\abs{\skel^c_k X}) \rar & \tilde H_i(\abs{\skel^c_{k-1} X}).
\end{tikzcd}
\end{equation*}
Since $X$ is Cohen--Macaulay and $i < d-k-1$, we know that $\tilde H_i(\abs{\lk \sigma})$ is $0$. Also, since $i$ is less than $d-k-1$, it is certainly less than $d-(k-1)-1$, so by induction, $\tilde H_i(\abs{\skel^c_{k-1} X})$ is $0$. Therefore, $\tilde H_i(\abs{\skel^c_k X})$ must be $0$ too, and we are done with this direction of the proof.

Now, for the reverse direction, assume that $\tilde H_i(\abs{\skel^c_k X}) = 0$ for all $i < d-k-1$ and all $k = -1, \dotsc, d$. Consider this part of the long exact sequence for $k \geq 0$:
\begin{equation*}
\begin{tikzcd}
\tilde H_{i+1}(\abs{\skel^c_{k-1} X}) \rar & \bigoplus_{\sigma \in X_k} \tilde H_i(\abs{\lk \sigma}) \rar & \tilde H_i(\abs{\skel^c_k X})
\end{tikzcd}
\end{equation*}
By our assumption, $\tilde H_i(\abs{\skel^c_k X}) = 0$. Also, since $i < d-k-1$, we have $i+1 < d - (k-1) - 1$, so our assumption tells us that $\tilde H_{i+1}(\abs{\skel^c_{k-1} X}) = 0$ too. Therefore, $\bigoplus_{\sigma \in X_k} \tilde H_i(\abs{\lk \sigma})$ is $0$, so $\tilde H_i(\abs{\lk \sigma})$ must be $0$ for all $i < d-k-1 = d- \dim \sigma - 1$.

This is true for all faces $\sigma$ of dimension $k \geq 0$, which leaves the case $\sigma = \emptyset$. In this case,
\begin{equation*}
\tilde H_i(\abs{\lk \sigma}) = \tilde H_i(\abs{X}) = \tilde H_i(\abs{\skel^c_{-1} X}) = 0.
\end{equation*}
Therefore, $X$ is Cohen--Macaulay.
\end{proof}

\begin{rmk}
This theorem has potential computational applications. Na\"\i{}vely, to check whether some complex $X$ is Cohen--Macaulay, one must compute the homology groups of the link of every face of $X$ --- if $\dim X = d$, there are at least $2^{d+1}$ faces. But with this result, one only needs to compute homologies of the $k$th co-skeletons of $X$ for the $d+2$ values of $k$ between $-1$ and $d$, or equivalently (by \cref{thm:coskel-simeq-bary}) the simplicial complexes $\bary(X) - \bary(\skel_k X)$. The tradeoff is that these simplicial complexes are much larger and more complicated than the links of faces: for instance, the number of vertices of $\bary(X) - \bary(\skel_k X)$ is at least on the order of $2^d$.
\end{rmk}

\begin{rmk}
We saw in \cref{thm:neighbourly-characterisation} that neighbourly spheres are characterised by the homology groups of either their skeletons or their co-skeletons. However, for Cohen--Macaulay complexes, the homology groups of skeletons are not sufficient for a characterisation: for example, \cref{fig:CM-not-characterised-by-skeletons} shows two complexes, one Cohen--Macaulay and one not, whose $k$-skeletons have isomorphic $i$th homology groups for all $k$ and $i$.
\end{rmk}

\begin{figure}
\centering
\begin{subfigure}{0.4\textwidth}
\centering
\begin{tikzpicture}

\coordinate (v0) at (0,0);
\foreach \i in {1,...,6}
	\coordinate (v\i) at (60*\i:1);

\filldraw [face] (v0)--(v1)--(v2)--cycle;
\filldraw [face] (v0)--(v2)--(v3)--cycle;
\filldraw [face] (v0)--(v3)--(v4)--cycle;
\filldraw [face] (v0)--(v4)--(v5)--cycle;
\filldraw [face] (v0)--(v5)--(v6)--cycle;
\filldraw [face] (v0)--(v6)--(v1)--cycle;

\node [vertex] at (v0){};
\foreach \i in {1,...,6}
	\node [vertex] at (v\i) {};

\end{tikzpicture}
\caption{A Cohen--Macaulay simplicial complex}
\end{subfigure} \qquad
\begin{subfigure}{0.4\textwidth}
\centering
\begin{tikzpicture}

\coordinate (v0) at (0,0);
\coordinate (v1) at (1,0);
\coordinate (v2) at ($(v1)+(60:1)$);
\coordinate (v3) at ($(v1)+(-60:1)$);
\coordinate (v4) at (-1,0);
\coordinate (v5) at ($(v4)+(120:1)$);
\coordinate (v6) at ($(v4)+(-120:1)$);

\filldraw [face] (v1)--(v0)--(v2)--cycle;
\filldraw [face] (v1)--(v2)--(v3)--cycle;
\filldraw [face] (v1)--(v3)--(v0)--cycle;
\filldraw [face] (v4)--(v0)--(v5)--cycle;
\filldraw [face] (v4)--(v5)--(v6)--cycle;
\filldraw [face] (v4)--(v6)--(v0)--cycle;

\node [vertex] at (v0){};
\foreach \i in {1,...,6}
	\node [vertex] at (v\i) {};

\end{tikzpicture}
\caption{A non-Cohen--Macaulay simplicial complex}
\end{subfigure}
\caption{Two simplicial complexes whose skeletons have isomorphic homology groups} \label{fig:CM-not-characterised-by-skeletons}
\end{figure}
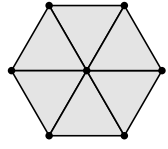
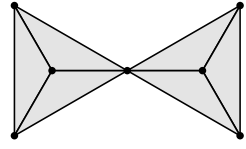

\Cref{thm:CM-characterisation} says that if $X$ is Cohen--Macaulay, almost all of the long exact sequence from \cref{thm:LES} is $0$. The remaining non-zero terms are captured in the following corollary:

\begin{crl} \label{thm:CM-SES}
If $X$ is a Cohen--Macaulay polytopal complex and $k = 0, \dotsc, d$, we have the following short exact sequence:
\begin{equation*}
\begin{tikzcd}
0 \rar & \tilde H_{d-k}(\abs{\skel^c_{k-1} X}) \rar & \bigoplus_{\sigma \in X_k} \tilde H_{d-k-1}(\abs{\lk \sigma}) \rar & \tilde H_{d-k-1}(\abs{\skel^c_k X}) \rar & 0.
\end{tikzcd}
\end{equation*}
\end{crl}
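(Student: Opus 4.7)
The plan is to specialise the long exact sequence of \cref{thm:LES} to the Cohen--Macaulay case and observe that most terms vanish. First, combining \cref{thm:CM-characterisation} with \cref{thm:coskel-dimension}, I would note that $\tilde H_i(\abs{\skel^c_j X}) = 0$ for every $i \neq d-j-1$, so the homology of each co-skeleton is concentrated in a single degree. Likewise, for a $k$-dimensional face $\sigma$, the Cohen--Macaulay definition gives $\tilde H_i(\abs{\lk \sigma}) = 0$ for $i < d-k-1$, and since the topological dimension of $\abs{\lk \sigma}$ is at most $d-k-1$ (each facet of $X$ containing $\sigma$ has dimension $d$), the homology of $\abs{\lk \sigma}$ is concentrated in degree $d-k-1$ as well.

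With these vanishings in hand, I would extract from \cref{thm:LES} the following five-term segment at index $i = d-k-1$:
\begin{equation*}
\begin{tikzcd}[column sep=small]
\tilde H_{d-k}(\abs{\skel^c_k X}) \rar & \tilde H_{d-k}(\abs{\skel^c_{k-1} X}) \rar & \bigoplus_{\sigma \in X_k} \tilde H_{d-k-1}(\abs{\lk \sigma}) \rar & \tilde H_{d-k-1}(\abs{\skel^c_k X}) \rar & \tilde H_{d-k-1}(\abs{\skel^c_{k-1} X}).
\end{tikzcd}
\end{equation*}
The leftmost term is zero because the homology of $\abs{\skel^c_k X}$ is concentrated in degree $d-k-1 \neq d-k$, and the rightmost is zero because the homology of $\abs{\skel^c_{k-1} X}$ is concentrated in degree $d-(k-1)-1 = d-k \neq d-k-1$. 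Exactness at the three surviving terms then gives the claimed short exact sequence directly.

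The main ``obstacle'' is really only a sanity check at the boundary values of $k$. When $k = 0$, the term $\tilde H_{d-k}(\abs{\skel^c_{k-1} X})$ becomes $\tilde H_d(\abs{X})$ through the identification $\skel^c_{-1} X = X$ (modulo the empty face), and the $\sigma = \emptyset$ clause of the Cohen--Macaulay definition is precisely what makes the vanishing arguments valid for $\skel^c_{-1} X$. When $k = d$, the resulting sequence $0 \to \tilde H_0(\abs{\skel^c_{d-1} X}) \to \bigoplus_{\sigma \in X_d} \tilde H_{-1}(\abs{\lk \sigma}) \to \tilde H_{-1}(\abs{\skel^c_d X}) \to 0$ reduces to the $R^{f_d(X)-1} \hookrightarrow R^{f_d(X)} \twoheadrightarrow R$ sequence already displayed in the base case of the proof of \cref{thm:LES}. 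No further computation is required.
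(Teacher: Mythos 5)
Your proof is correct and takes essentially the same approach as the paper's: you extract the same five-term segment of the long exact sequence from \cref{thm:LES} at index $i = d-k-1$, kill the leftmost term via \cref{thm:coskel-dimension} and the rightmost via \cref{thm:CM-characterisation}, and read off the short exact sequence. The extra remarks about link homology being concentrated in one degree and the boundary cases $k=0,d$ are correct observations but not needed for the argument.
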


\begin{proof}
Take the following segment of the long exact sequence from \cref{thm:LES}:
{\small \begin{equation*}
\begin{tikzcd}
\tilde H_{d-k}(\abs{\skel^c_k X}) \rar & \tilde H_{d-k}(\abs{\skel^c_{k-1} X}) \rar & \bigoplus_{\sigma \in X_k} \tilde H_{d-k-1}(\abs{\lk \sigma}) \rar & \tilde H_{d-k-1}(\abs{\skel^c_k X}) \rar & \tilde H_{d-k-1}(\abs{\skel^c_{k-1} X}).
\end{tikzcd}
\end{equation*}}
The first term, $\tilde H_{d-k}(\abs{\skel^c_k X})$, is $0$ by \cref{thm:coskel-dimension}. The last term, $\tilde H_{d-k-1}(\abs{\skel^c_{k-1} X})$, is $0$ by \cref{thm:CM-characterisation}.
\end{proof}

We can combine these short exact sequences into a long exact sequence, using the following homological algebra fact:
\begin{lma} \label{thm:splicing}
Suppose we have two exact sequences:
\begin{equation*}
\begin{tikzcd}
\dotsb \rar & A_2 \arrow[r, "a_2"] & A_1 \arrow[r, "a_1"] & C \rar & 0
\end{tikzcd}
\qquad \text{and} \qquad
\begin{tikzcd}
0 \rar & C \arrow[r, "b_0"] & B_{-1} \arrow[r, "b_{-1}"] & B_{-2} \rar & \dotsb.
\end{tikzcd}
\end{equation*}
Then the horizontal sequence in the following diagram is exact:
\begin{equation*}
\begin{tikzpicture}[commutative diagrams/every diagram, column sep=1.0cm, row sep=1.0cm]

\matrix[matrix of math nodes, name=m, commutative diagrams/every cell] {
& & 0 & 0 \\
\dotsb & A_2 & A_1 & B_{-1} & B_{-2} & \dotsb. \\
};
\node (c) at ($(m-2-3)!0.5!(m-1-4)$) {$C$};

\path[commutative diagrams/.cd, every arrow, every label]
	(m-1-3) edge (c)
	(c) edge (m-1-4)
		edge node {$b_0$} (m-2-4)
	(m-2-1) edge (m-2-2)
	(m-2-2) edge node {$a_2$} (m-2-3)
	(m-2-3) edge node {$a_1$} (c)
		edge[commutative diagrams/dashed] node [swap] {$b_0 \circ a_1$} (m-2-4)
	(m-2-4) edge node {$b_{-1}$} (m-2-5)
	(m-2-5) edge (m-2-6);

\end{tikzpicture}
\end{equation*}
\end{lma}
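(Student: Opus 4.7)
The plan is to verify exactness of the spliced sequence at each position, noting that at all positions other than $A_1$ and $B_{-1}$ the maps and their images/kernels are inherited verbatim from the two given exact sequences, so exactness there is immediate. Thus the whole task reduces to checking exactness at $A_1$ (with respect to the maps $a_2$ and $b_0 \circ a_1$) and at $B_{-1}$ (with respect to $b_0 \circ a_1$ and $b_{-1}$). The key observations I will lean on are that $a_1 \colon A_1 \to C$ is surjective (since $A_1 \to C \to 0$ is exact at $C$) and that $b_0 \colon C \to B_{-1}$ is injective (since $0 \to C \to B_{-1}$ is exact at $C$).

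For exactness at $A_1$, I would argue that $\ker(b_0 \circ a_1) = \ker(a_1)$ using injectivity of $b_0$, and then $\ker(a_1) = \operatorname{im}(a_2)$ by exactness of the first sequence at $A_1$. For exactness at $B_{-1}$, I would compute $\operatorname{im}(b_0 \circ a_1) = b_0(\operatorname{im}(a_1)) = b_0(C)$ using surjectivity of $a_1$, and note that $b_0(C) = \operatorname{im}(b_0) = \ker(b_{-1})$ by exactness of the second sequence at $B_{-1}$. These two computations together give exactness of the spliced sequence.

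There is essentially no genuine obstacle here; the proof is a short diagram chase. The only thing that requires a moment of care is remembering to use both of the ``boundary'' exactness statements packaged into the hypotheses (surjectivity of $a_1$ and injectivity of $b_0$), which is precisely what it means for the two sequences to end and begin at $C$ in the specified way. I would write the argument as two short paragraphs, one per position, with no need for an explicit diagram chase beyond the identifications above.
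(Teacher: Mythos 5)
Your proposal is correct and follows exactly the same route as the paper's proof: both arguments reduce to checking exactness at $A_1$ and $B_{-1}$, using injectivity of $b_0$ to get $\ker(b_0 \circ a_1) = \ker a_1 = \im a_2$ and surjectivity of $a_1$ to get $\im(b_0 \circ a_1) = \im b_0 = \ker b_{-1}$. No differences worth noting.
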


\begin{proof}
Since $a_1$ is surjective, $\im(b_0 \circ a_1) = \im(b_0) = \ker(b_{-1})$. Since $b_0$ is injective, $\ker(b_0 \circ a_1) = \ker a_1 = \im a_2$.
\end{proof}

\begin{crl} \label{thm:CM-LES}
If $X$ is Cohen--Macaulay, we have the following long exact sequence:
\begin{equation*}
\begin{tikzcd}
0 \rar & \tilde H_d(\abs{X}) \rar & \bigoplus_{\sigma \in X_0} \tilde H_{d-1}(\abs{\lk \sigma}) \rar & \bigoplus_{\sigma \in X_1} \tilde H_{d-2}(\abs{\lk \sigma}) \rar & \dotsb \rar & \bigoplus_{\sigma \in X_d} \tilde H_{-1}(\abs{\lk \sigma}) \rar & R \rar & 0.
\end{tikzcd}
\end{equation*}
\end{crl}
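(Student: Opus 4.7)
The plan is to splice together the $d+1$ short exact sequences provided by \cref{thm:CM-SES} using \cref{thm:splicing}.

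First, I would observe that the short exact sequence from \cref{thm:CM-SES} for parameter $k$ ends with the term $\tilde H_{d-k-1}(\abs{\skel^c_k X})$, while the corresponding sequence for parameter $k+1$ begins with $\tilde H_{d-(k+1)}(\abs{\skel^c_{(k+1)-1} X}) = \tilde H_{d-k-1}(\abs{\skel^c_k X})$, which is the same group. This is exactly the situation to which \cref{thm:splicing} applies. At each value of $k = 0, 1, \ldots, d-1$, I can glue the sequence for $k$ to the sequence for $k+1$ by composing the surjection $\bigoplus_{\sigma \in X_k} \tilde H_{d-k-1}(\abs{\lk \sigma}) \to \tilde H_{d-k-1}(\abs{\skel^c_k X})$ with the injection $\tilde H_{d-k-1}(\abs{\skel^c_k X}) \to \bigoplus_{\sigma \in X_{k+1}} \tilde H_{d-k-2}(\abs{\lk \sigma})$. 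Iterating this for every $k$ yields one long exact sequence whose internal terms are precisely $\bigoplus_{\sigma \in X_k} \tilde H_{d-k-1}(\abs{\lk \sigma})$ for $k = 0, 1, \ldots, d$, in order.

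Second, I would identify the two endpoints. The leftmost term (coming from the sequence for $k=0$) is $\tilde H_d(\abs{\skel^c_{-1} X})$, which equals $\tilde H_d(\abs{X})$ since $\skel^c_{-1} X$ and $X$ differ only in the empty face. The rightmost term (coming from the sequence for $k=d$) is $\tilde H_{-1}(\abs{\skel^c_d X}) = \tilde H_{-1}(\emptyset) = R$ by the convention fixed in \cref{sec:preliminaries}. The initial and final $0$s in the long exact sequence are inherited directly from the outermost terms of the first and last short exact sequences in the splice.

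The argument is essentially an assembly step, since \cref{thm:CM-SES} and \cref{thm:splicing} already carry all of the homological content. The only mild subtlety is verifying the indexing at the endpoints, in particular using $\tilde H_{-1}(\emptyset) = R$ to produce the trailing $R$; there is no genuine obstacle beyond that bookkeeping.
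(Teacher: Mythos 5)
Your proof is correct and follows the same approach as the paper: iterated application of the splicing lemma to the short exact sequences from \cref{thm:CM-SES}, with the endpoint identifications $\tilde H_d(\abs{\skel^c_{-1} X}) = \tilde H_d(\abs{X})$ and $\tilde H_{-1}(\abs{\skel^c_d X}) = \tilde H_{-1}(\emptyset) = R$. The index bookkeeping is right, and the observation that the tail of the $k$th sequence matches the head of the $(k+1)$th is precisely what makes the splicing go through.
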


\begin{proof}
Use \cref{thm:splicing} to stitch together the short exact sequences from \cref{thm:CM-SES}. The start of the sequence looks like this:
{\small \begin{equation*}
\begin{tikzpicture}[commutative diagrams/every diagram, column sep=0.6cm, row sep=1.6cm]

\matrix[matrix of math nodes, name=m, commutative diagrams/every cell] {
& & 0 & 0 \\
0 & \tilde H_d(\abs{\skel^c_{-1} X}) & \bigoplus_{\sigma \in X_0} \tilde H_{d-1}(\abs{\lk \sigma}) & \bigoplus_{\sigma \in X_1} \tilde H_{d-2}(\abs{\lk \sigma}) & \bigoplus_{\sigma \in X_2} \tilde H_{d-3}(\abs{\lk \sigma}) & \dotsb \\
& & & 0 & 0 \\
};
\node (hd1skel) at ($(m-2-3)!0.5!(m-1-4)$) {$\tilde H_{d-1}(\abs{\skel^c_0 X})$};
\node (hd2skel) at ($(m-3-4)!0.5!(m-2-5)$) {$\tilde H_{d-2}(\abs{\skel^c_1 X})$};

\path[commutative diagrams/.cd, every arrow, every label]
	(m-1-3) edge (hd1skel)
	(hd1skel) edge (m-1-4)
		edge (m-2-4)
	(m-2-1) edge (m-2-2)
	(m-2-2) edge (m-2-3)
	(m-2-3) edge (hd1skel)
		edge[commutative diagrams/dashed] (m-2-4)
	(m-2-4) edge[commutative diagrams/dashed] (m-2-5)
		edge (hd2skel)
	(m-2-5) edge[commutative diagrams/dashed] (m-2-6)
	(hd2skel) edge (m-2-5)
		edge (m-3-5)
	(m-3-4) edge (hd2skel);

\end{tikzpicture}
\end{equation*}}
And the end like this:
{\small \begin{equation*}
\begin{tikzpicture}[commutative diagrams/every diagram, column sep=0.6cm, row sep=1.6cm]

\matrix[matrix of math nodes, name=m, commutative diagrams/every cell] {
& 0 & 0 \\
\dotsb & \bigoplus_{\sigma \in X_{d-2}} \tilde H_1(\abs{\lk \sigma}) & \bigoplus_{\sigma \in X_{d-1}} \tilde H_0(\abs{\lk \sigma}) & \bigoplus_{\sigma \in X_d} \tilde H_{-1}(\abs{\lk \sigma}) & \tilde H_{-1}(\abs{\skel^c_d X}) & 0 \\
& & 0 & 0 \\
};
\node (h1skel) at ($(m-2-2)!0.5!(m-1-3)$) {$\tilde H_1(\abs{\skel^c_{d-2} X})$};
\node (h0skel) at ($(m-3-3)!0.5!(m-2-4)$) {$\tilde H_0(\abs{\skel^c_{d-1} X})$};

\path[commutative diagrams/.cd, every arrow, every label]
	(m-1-2) edge (h1skel)
	(h1skel) edge (m-1-3)
		edge (m-2-3)
	(m-2-1) edge[commutative diagrams/dashed] (m-2-2)
	(m-2-2) edge[commutative diagrams/dashed] (m-2-3)
		edge (h1skel)
	(m-2-3) edge[commutative diagrams/dashed] (m-2-4)
		edge (h0skel)
	(m-2-4) edge (m-2-5)
	(m-2-5) edge (m-2-6)
	(h0skel) edge (m-2-4)
		edge (m-3-4)
	(m-3-3) edge (h0skel);

\end{tikzpicture}
\end{equation*}}

\end{proof}

Note that this exact sequence is reminiscent of the ``partition complex'', a chain complex defined by \citet[Definition~25]{art:Adiprasito-Yashfe}.

\begin{rmk}
If the definition of a Cohen--Macaulay complex is modified to allow $\tilde H_i(\abs{\lk_X \sigma})$ to be non-zero when $\sigma = \emptyset$, we get the definition of a \emph{Buchsbaum} complex. Buchsbaum complexes can also be characterised by homological properties of their co-skeletons, although the statement is not as simple as for Cohen--Macaulay complexes in \cref{thm:CM-characterisation}: a complex $X$ is Buchsbaum if and only if the map $\tilde H_i(\abs{\skel^c_k X}) \to \tilde H_i(\abs{X})$ induced by the inclusion of topological spaces is an isomorphism for all $i < d-k-1$ and a surjection for $i = d-k-1$, for all $k = -1, \dotsc, d$. This follows fairly easily from the long exact sequence in \cref{thm:LES}, although we leave the details to the reader.
\end{rmk}

\subsection{Leray complexes}

A simplicial complex $\Delta$ is \emph{$r$-Leray} if every induced subcomplex $\Lambda$ of $\Delta$ has $\tilde H_i(\abs{\Lambda}) = 0$ for $i \geq r$. For example, if $\Delta$ is the nerve of a family of convex open subsets of $\mathbb R^r$, then it follows from the nerve theorem (\cref{thm:nerve}) that $\Delta$ is $r$-Leray (see \citet{art:Kalai-Meshulam}).

Equivalently, $\Delta$ is $r$-Leray if the homology $\tilde H_i(\abs{\lk_\Delta \sigma})$ is $0$ for every face $\sigma \in \Delta$ (including the empty face) and every $i \geq r$ \citep[Proposition~3.1]{art:Kalai-Meshulam}. We will use this condition to generalise $r$-Leray-ness to polytopal complexes:  we will say a complex $X$ is $r$-Leray if $\tilde H_i(\abs{\lk_X \sigma}) = 0$ for all faces $\sigma$ (including $\sigma = \emptyset$) and all $i \geq r$. Note that this condition is not equivalent to the condition on induced subcomplexes in the non-simplicial case: for example, if $X$ is a square, every non-empty link is contractible so $X$ is $0$-Leray, but the subcomplex induced by a pair of diagonally opposite vertices is not connected.

The following theorem provides a characterisation of $r$-Leray complexes in terms of their co-skeletons.

\begin{thm} \label{thm:Leray-characterisation}
A polytopal complex $X$ is $r$-Leray if and only if $\tilde H_i(\abs{\skel^c_k X}) = 0$ for all $i \geq r$ and all $k = -1, \dotsc, d$.
\end{thm}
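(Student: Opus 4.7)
The proof closely parallels the argument for \cref{thm:CM-characterisation}; the bookkeeping is essentially the same, with the degree condition ``$i < d-k-1$'' replaced throughout by ``$i \geq r$''. The main tool is again the long exact sequence of \cref{thm:LES}.

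For the forward direction, assume $X$ is $r$-Leray, and I would prove $\tilde H_i(\abs{\skel^c_k X}) = 0$ for $i \geq r$ by induction on $k$, starting from $k = -1$. The base case uses $\abs{\skel^c_{-1} X} = \abs{X} = \abs{\lk_X \emptyset}$, and the hypothesis $\tilde H_i(\abs{\lk_X \emptyset}) = 0$ for $i \geq r$. For the inductive step, I would extract the segment
\begin{equation*}
\bigoplus_{\sigma \in X_k} \tilde H_i(\abs{\lk \sigma}) \to \tilde H_i(\abs{\skel^c_k X}) \to \tilde H_i(\abs{\skel^c_{k-1} X})
\end{equation*}
of the long exact sequence; the left-hand term is zero by the $r$-Leray hypothesis, the right-hand term is zero by induction, and exactness finishes the step.

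For the converse, assume $\tilde H_i(\abs{\skel^c_k X}) = 0$ for all $i \geq r$ and all $k = -1, \dotsc, d$. The case $\sigma = \emptyset$ is handled directly since $\abs{\lk_X \emptyset} = \abs{X} = \abs{\skel^c_{-1} X}$. For $\sigma$ of dimension $k \geq 0$, I would use the segment
\begin{equation*}
\tilde H_{i+1}(\abs{\skel^c_{k-1} X}) \to \bigoplus_{\sigma \in X_k} \tilde H_i(\abs{\lk \sigma}) \to \tilde H_i(\abs{\skel^c_k X});
\end{equation*}
when $i \geq r$, both outer terms vanish by hypothesis (noting $i+1 \geq r$ too), so the middle direct sum vanishes, forcing $\tilde H_i(\abs{\lk_X \sigma}) = 0$ for every $\sigma \in X_k$.

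There is no real obstacle here: the proof is a direct translation of the Cohen--Macaulay argument, and it works precisely because the condition ``$i \geq r$'' is preserved under the index shift $i \mapsto i+1$ that appears in the long exact sequence, just as the condition ``$i < d-k-1$'' was preserved under the joint shift $(i,k) \mapsto (i, k-1)$ in the earlier proof.
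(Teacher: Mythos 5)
Your proposal is correct and matches the paper's proof essentially step for step: same long exact sequence, same segments extracted, same handling of the $\sigma = \emptyset$ case. The only cosmetic difference is in the forward direction, where the paper observes that the vanishing of $\bigoplus_{\sigma \in X_k} \tilde H_i(\abs{\lk \sigma})$ gives a chain of injections $\tilde H_i(\abs{\skel^c_k X}) \hookrightarrow \tilde H_i(\abs{\skel^c_{k-1} X}) \hookrightarrow \dotsb \hookrightarrow \tilde H_i(\abs{\skel^c_{-1} X}) = 0$, while you phrase the same fact as an induction on $k$; the two are equivalent.
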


\begin{proof}
For the forward direction, assume $X$ is $r$-Leray and let $i \geq r$. When $k = -1$,
\begin{equation*}
\tilde H_i(\abs{\skel^c_{-1} X}) = \tilde H_i(\abs{X}) = \tilde H_i(\abs{\lk_X \emptyset}),
\end{equation*}
which is $0$, since $X$ is $r$-Leray.

For $k \geq 0$, consider the following subsequence of the long exact sequence from \cref{thm:LES}:
\begin{equation*}
\begin{tikzcd}[column sep=small]
\bigoplus_{\sigma \in X_k} \tilde H_i(\abs{\lk \sigma}) \rar & \tilde H_i(\abs{\skel^c_k X}) \rar & \tilde H_i(\abs{\skel^c_{k-1} X}).
\end{tikzcd}
\end{equation*}
Since $X$ is $r$-Leray, $\bigoplus_{\sigma \in X_k} \tilde H_i(\abs{\lk \sigma})$ is $0$. Therefore, $\tilde H_i(\abs{\skel^c_k X})$ injects into $\tilde H_i(\abs{\skel^c_{k-1} X})$. Since this is true for all $k = 0, \dotsc, d$, we get a series of injections:
\begin{equation*}
\begin{tikzcd}[column sep=small]
\tilde H_i(\abs{\skel^c_k X}) \arrow[r, hook] & \tilde H_i(\abs{\skel^c_{k-1} X}) \arrow[r, hook] & \tilde H_i(\abs{\skel^c_{k-2} X}) \arrow[r, hook] & \dotsb \arrow[r, hook] & \tilde H_i(\abs{\skel^c_0 X}) \arrow[r, hook] & \tilde H_i(\abs{\skel^c_{-1} X}).
\end{tikzcd}
\end{equation*}
But we saw above that $\tilde H_i(\abs{\skel^c_{-1} X}) = 0$. Therefore, $\tilde H_i(\abs{\skel^c_k X})$ injects into $0$, so it must itself be $0$, for all $k = 0, \dotsc, d$.

For the reverse direction, assume that $\tilde H_i(\abs{\skel^c_k X}) = 0$ for all $k = -1, \dotsc, d$ and all $i \geq r$. When $k \geq 0$, consider this part of the long exact sequence:
\begin{equation*}
\begin{tikzcd}
\tilde H_{i+1}(\abs{\skel^c_{k-1} X}) \rar & \bigoplus_{\sigma \in X_k} \tilde H_i(\abs{\lk \sigma}) \rar & \tilde H_i(\abs{\skel^c_k X}).
\end{tikzcd}
\end{equation*}
By our assumptions, the two outer terms here are both $0$, hence the middle term is $0$ as well, so $\tilde H_i(\abs{\lk \sigma}) = 0$ for all $k$-faces $\sigma$. This works for all $k = 0, \dotsc, d$, which only leaves $k = -1$: the only $(-1)$-dimensional face is $\emptyset$, and
\begin{equation*}
\tilde H_i(\abs{\lk_X \emptyset}) = \tilde H_i(\abs{X}) = \tilde H_i(\abs{\skel^c_{-1} X})
\end{equation*}
which we assumed to be $0$.

Therefore, $\tilde H_i(\abs{\lk_X \sigma}) = 0$ for all faces $\sigma$ and all $i \geq r$, so $X$ is $r$-Leray.
\end{proof}

\subsection{Stacked balls}

A \emph{homology sphere} is a polytopal complex $X$ of dimension $d$ where for every face $\sigma$ including $\sigma = \emptyset$,
\begin{equation*}
\tilde H_i(\abs{\lk \sigma}) = \begin{cases*} 0 & if $i \neq d - \dim \sigma - 1$ \\ R & if $i = d - \dim \sigma - 1$. \end{cases*}
\end{equation*}
A \emph{homology manifold} is defined similarly but only considering $\sigma \neq \emptyset$. A \emph{homology ball} of dimension $d$ is a complex $X$ where:
\begin{itemize}
\item $\tilde H_i(\abs{X}) = 0$ for all $i$,
\item for every non-empty face $\sigma$,
\begin{equation*}
\tilde H_i(\abs{\lk \sigma}) = \begin{cases*} 0 & if $i \neq d - \dim \sigma - 1$ \\ \text{$R$ or $0$} & if $i = d - \dim \sigma - 1$, \end{cases*}
\end{equation*}
and
	\item the set of faces $\sigma$ with $\tilde H_{d-\dim \sigma - 1}(\abs{\lk \sigma}) = 0$ forms a subcomplex of $X$ that is a $(d-1)$-dimensional homology sphere.
\end{itemize}
If $\tilde H_{d-\dim \sigma - 1}(\abs{\lk \sigma})$ is $0$, $\sigma$ is called a \emph{boundary face}, and if this homology is $R$, $\sigma$ is an \emph{interior face}.

Suppose $X$ is a homology ball of dimension $d$. If every face of $X$ of dimension less than or equal to $d - s - 1$ is a boundary face, then $X$ is said to be \emph{$s$-stacked}. For example, if $X$ is $1$-stacked, then the interior faces must all have dimension $d$ or $d-1$; this condition is sometimes simply called ``stacked'' for simplicial complexes (e.g.\ \citet{art:Kalai-rigidity}), or ``capped'' for cubical complexes (e.g.\ \citet{art:Blind-Blind}). Simplicial $s$-stacked balls are well studied due to their connection to the Lower Bound Conjecture --- see e.g.\ \citet{art:Murai-Nevo}.

Just like Cohen--Macaulay complexes, we can characterise $s$-stacked balls using their co-skeletons.

\begin{thm} \label{thm:stacked-characterisation}
Suppose $X$ is a homology ball with dimension $d$. Then $X$ is $s$-stacked if and only if $\tilde H_{d-k-1}(\abs{\skel^c_k X}) = 0$ for all $k \leq d-s-1$ (or equivalently $\tilde H_j(\abs{\skel^c_{d-j-1} X}) = 0$ for all $j \geq s$, where $j = d-k-1$).
\end{thm}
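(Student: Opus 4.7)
The plan is to use the short exact sequence from \cref{thm:CM-SES}, which applies because any homology ball $X$ is Cohen--Macaulay: the defining conditions on link homology in top degree imply that $\tilde H_i(\abs{\lk_X\sigma}) = 0$ for all $i < d - \dim\sigma - 1$ and all $\sigma$ (including $\sigma = \emptyset$, using $\tilde H_i(\abs X) = 0$). So \cref{thm:CM-SES} gives, for each $k = 0, \dotsc, d$, a short exact sequence
\begin{equation*}
0 \to \tilde H_{d-k}(\abs{\skel^c_{k-1} X}) \to \bigoplus_{\sigma \in X_k} \tilde H_{d-k-1}(\abs{\lk \sigma}) \to \tilde H_{d-k-1}(\abs{\skel^c_k X}) \to 0.
\end{equation*}
Writing $A_k \coloneqq \tilde H_{d-k-1}(\abs{\skel^c_k X})$ and $B_k \coloneqq \bigoplus_{\sigma \in X_k} \tilde H_{d-k-1}(\abs{\lk\sigma})$, this is $0 \to A_{k-1} \to B_k \to A_k \to 0$. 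Note $B_k = 0$ is equivalent to every $k$-face of $X$ being a boundary face, and $A_{-1} = \tilde H_d(\abs X) = 0$ by definition of a homology ball.

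I would then run a clean induction on $k$ to prove simultaneously that $A_k = 0$ for all $k \leq d-s-1$ if and only if $B_k = 0$ for all $k \leq d-s-1$. For the forward implication, assume $X$ is $s$-stacked, so $B_k = 0$ for $k \leq d-s-1$; combining this with the base case $A_{-1} = 0$ and the exactness of $0 \to A_{k-1} \to 0 \to A_k \to 0$ forces $A_k = 0$ inductively. For the converse, assume $A_k = 0$ for $k \leq d-s-1$; since $A_{k-1} = 0$ and $A_k = 0$, the exact sequence $0 \to 0 \to B_k \to 0 \to 0$ gives $B_k = 0$, so every $k$-face is boundary. The base case $k = 0$ in the converse uses only $A_{-1} = 0 = A_0$ to conclude $B_0 = 0$.

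Rephrasing $B_k = 0$ as ``every $k$-dimensional face of $X$ is a boundary face'', the equivalence above says exactly that $X$ is $s$-stacked if and only if $A_k = 0$ for all $k \leq d-s-1$, which is the statement of the theorem. (The reformulation $\tilde H_j(\abs{\skel^c_{d-j-1}X}) = 0$ for $j \geq s$ is just the substitution $j = d-k-1$.)

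The only nontrivial step is recognising that one has the clean ladder $0 \to A_{k-1} \to B_k \to A_k \to 0$ of short exact sequences, which requires invoking \cref{thm:CM-SES}; once that is in place, the proof is a two-line induction, with no genuine obstacle. A minor subtlety worth double-checking is the case $k = d-s-1 = d$ (i.e.\ $s = -1$ or $s = 0$, where every face is vacuously or trivially a boundary face) and the edge behaviour at $k = -1$, but these are handled uniformly by $A_{-1} = 0$ and the fact that the SES is valid through $k = d$.
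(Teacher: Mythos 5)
Your proof is correct and follows essentially the same route as the paper: both reduce to the short exact sequence of \cref{thm:CM-SES}, using $A_{-1} = \tilde H_d(\abs X) = 0$ as the anchor. The paper dispenses with the induction by observing directly that exactness of $0 \to A_{k-1} \to B_k \to A_k \to 0$ gives $B_k = 0 \iff (A_{k-1} = 0$ and $A_k = 0)$ for each individual $k$, so the two conditions match term-by-term; your inductive phrasing is a slightly longer way of saying the same thing, and is equally valid.
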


\begin{proof}
By definition, a $k$-face $\sigma$ is a boundary face if and only if $\tilde H_{d-k-1}(\abs{\lk \sigma})$ is $0$. Therefore, $X$ is $s$-stacked if and only if
\begin{equation*}
\bigoplus_{\sigma \in X_k} \tilde H_{d-k-1}(\abs{\lk \sigma}) = 0
\end{equation*}
for all $k \leq d-s-1$.

We could apply this fact directly to the long exact sequence from \cref{thm:LES}; however, since $X$ is a homology ball, it is Cohen--Macaulay, so we can take a shortcut by using the short exact sequence from \cref{thm:CM-SES}:
\begin{equation*}
\begin{tikzcd}
0 \rar & \tilde H_{d-k}(\abs{\skel^c_{k-1} X}) \rar & \bigoplus_{\sigma \in X_k} \tilde H_{d-k-1}(\abs{\lk \sigma}) \rar & \tilde H_{d-k-1}(\abs{\skel^c_k X}) \rar & 0.
\end{tikzcd}
\end{equation*}
The inner term of this short exact sequence is $0$ for $k \leq d-s-1$ if and only if both outer terms are $0$ for the same range of $k$, which is equivalent to the claimed condition.
\end{proof}

\begin{rmk}
A closely related notion is an \emph{$s$-stacked sphere}, which is a complex that is the boundary of some $s$-stacked ball. Unfortunately, $s$-stacked spheres cannot be distinguished by topological features of their co-skeletons, at least for simplicial complexes. For example, the two spheres shown in \cref{fig:stacked-spheres} have homeomorphic $k$th co-skeletons for all $k$, but only one of the spheres is $1$-stacked as a simplicial complex.
\end{rmk}

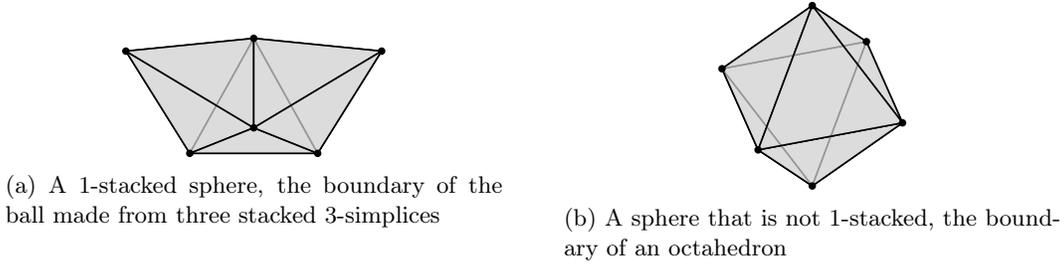
\begin{figure}
\centering
\begin{subfigure}{0.4\textwidth}
\centering
\begin{tikzpicture}[scale=1.7]

\coordinate (v1) at (-1,0.8);
\coordinate (v2) at (-0.5,0);
\coordinate (v3) at (0,0.2);
\coordinate (v4) at (0,0.9);
\coordinate (v5) at (0.5,0);
\coordinate (v6) at (1,0.8);

\filldraw [face] (v1)--(v2)--(v4)--cycle;
\filldraw [face] (v2)--(v4)--(v5)--cycle;
\filldraw [face] (v4)--(v5)--(v6)--cycle;
\filldraw [face] (v1)--(v2)--(v3)--cycle;
\filldraw [face] (v1)--(v3)--(v4)--cycle;
\filldraw [face] (v2)--(v3)--(v5)--cycle;
\filldraw [face] (v3)--(v4)--(v6)--cycle;
\filldraw [face] (v3)--(v5)--(v6)--cycle;

\foreach \i in {1,...,6}
	\node [vertex] at (v\i) {};

\end{tikzpicture}
\caption{A $1$-stacked sphere, the boundary of the ball made from three stacked $3$-simplices}
\end{subfigure} \qquad
\begin{subfigure}{0.4\textwidth}
\centering
\begin{tikzpicture}[3D, scale=1.2]

\coordinate (v1+) at (1,0,0);
\coordinate (v1-) at (-1,0,0);
\coordinate (v2+) at (0,1,0);
\coordinate (v2-) at (0,-1,0);
\coordinate (v3+) at (0,0,1);
\coordinate (v3-) at (0,0,-1);

\filldraw [face] (v1+)--(v2+)--(v3-) -- cycle;
\filldraw [face] (v1-)--(v2+)--(v3-) -- cycle;
\filldraw [face] (v1+)--(v2+)--(v3+) -- cycle;
\filldraw [face] (v1+)--(v2-)--(v3-) -- cycle;
\filldraw [face] (v1-)--(v2+)--(v3+) -- cycle;
\filldraw [face] (v1+)--(v2-)--(v3+) -- cycle;
\filldraw [face] (v1-)--(v2-)--(v3-) -- cycle;
\filldraw [face] (v1-)--(v2-)--(v3+) -- cycle;
\node [vertex] at (v1+) {};
\node [vertex] at (v1-) {};
\node [vertex] at (v2+) {};
\node [vertex] at (v2-) {};
\node [vertex] at (v3+) {};
\node [vertex] at (v3-) {};

\end{tikzpicture}
\caption{A sphere that is not $1$-stacked, the boundary of an octahedron}
\end{subfigure}
\caption{Two spheres with homeomorphic co-skeletons} \label{fig:stacked-spheres}
\end{figure}

\section{CAT(0) cubical complexes} \label{sec:CAT(0)}

One of the key features of a cubical complex is its \emph{hyperplanes}. If we associate an $r$-dimensional cube with the space $[0,1]^r$ in $\mathbb R^r$, then the \emph{$i$th hyperplane} of that cube is the subspace where $x_i = \frac{1}{2}$ --- see \cref{fig:iterated-hyperplanes-1st}. A hyperplane in a cubical complex $\Squelta$ is a maximally connected cubical complex obtained by glueing together hyperplanes of its component cubes where they meet along faces --- see \cref{fig:hyperplanes}. (We should note that in general, issues can arise when a hyperplane ``intersects itself'', in which case the hyperplane is not a true polytopal complex by our definitions. However, the focus of this section will be on CAT(0) cubical complexes, in which hyperplanes behave nicely --- see e.g.\ \cref{thm:CAT(0)-hyperplanes-from-cube-embedding} --- so we will not dwell on this issue.)

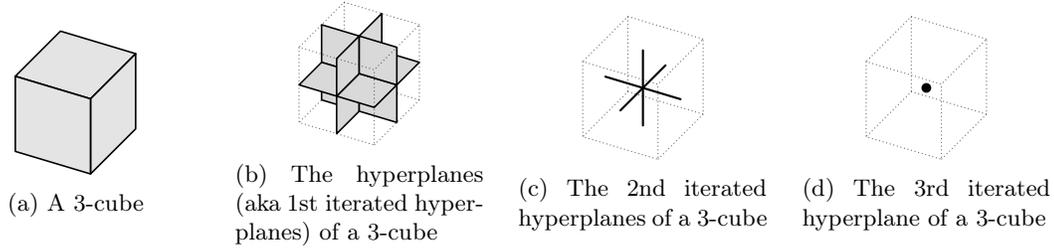
\begin{figure}
\centering
\begin{subfigure}{0.2\textwidth}
\centering
\begin{tikzpicture}[3D, baseline=0]

\filldraw [face] (0,0,1)--(1,0,1)--(1,1,1)--(0,1,1)--cycle;
\filldraw [face] (0,0,0)--(1,0,0)--(1,0,1)--(0,0,1)--cycle;
\filldraw [face] (0,0,0)--(0,1,0)--(0,1,1)--(0,0,1)--cycle;

\end{tikzpicture}
\caption{A $3$-cube} \label{fig:iterated-hyperplanes-0th}
\end{subfigure} \quad
\begin{subfigure}{0.2\textwidth}
\centering
\begin{tikzpicture}[3D, face/.append style={opacity=1}, baseline=0]

\draw [sketch edge] (1,0,0) -- (1,1,0) -- (0,1,0) (1,1,0) -- (1,1,1);

\filldraw [face] (0.5,0.5,0) -- (0.5,1,0) -- (0.5,1,0.5) -- (0.5,0.5,0.5) -- cycle;
\filldraw [face] (0,0.5,0) -- (0.5,0.5,0) -- (0.5,0.5,0.5) -- (0,0.5,0.5) -- cycle;
\filldraw [face] (0.5,0,0) -- (0.5,0.5,0) -- (0.5,0.5,0.5) -- (0.5,0,0.5) -- cycle;
\filldraw [face] (0.5,0,0.5) -- (1,0,0.5) -- (1,0.5,0.5) -- (0.5,0.5,0.5) -- cycle;
\filldraw [face] (0.5,0.5,0.5) -- (1,0.5,0.5) -- (1,0.5,1) -- (0.5,0.5,1) -- cycle;
\filldraw [face] (0.5,0.5,0.5) -- (0.5,1,0.5) -- (0.5,1,1) -- (0.5,0.5,1) -- cycle;
\filldraw [face] (0,0.5,0.5) -- (0,1,0.5) -- (0.5,1,0.5) -- (0.5,0.5,0.5) -- cycle;
\filldraw [face] (0,0,0.5) -- (0.5,0,0.5) -- (0.5,0.5,0.5) -- (0,0.5,0.5) -- cycle;
\filldraw [face] (0,0.5,0.5) -- (0.5,0.5,0.5) -- (0.5,0.5,1) -- (0,0.5,1) -- cycle;
\filldraw [face] (0.5,0,0.5) -- (0.5,0.5,0.5) -- (0.5,0.5,1) -- (0.5,0,1) -- cycle;

%

\draw [sketch edge] (0,0,0) -- (1,0,0) -- (1,0,1) -- (0,0,1) -- (0,1,1) -- (0,1,0) -- (0,0,0) -- (0,0,1) (1,0,1) -- (1,1,1) -- (0,1,1);

\end{tikzpicture}
\caption{The hyperplanes (aka $1$st iterated hyperplanes) of a $3$-cube} \label{fig:iterated-hyperplanes-1st}
\end{subfigure} \quad
\begin{subfigure}{0.2\textwidth}
\centering
\begin{tikzpicture}[3D, baseline=0]

\draw [sketch edge] (1,0,0) -- (1,1,0) -- (0,1,0) (1,1,0) -- (1,1,1);

\draw [edge, thick] (0.5,0.5,0)--(0.5,0.5,1);
\draw [edge, thick] (0.5,0,0.5)--(0.5,1,0.5);
\draw [edge, thick] (0,0.5,0.5)--(1,0.5,0.5);


\draw [sketch edge] (0,0,0) -- (1,0,0) -- (1,0,1) -- (0,0,1) -- (0,1,1) -- (0,1,0) -- (0,0,0) -- (0,0,1) (1,0,1) -- (1,1,1) -- (0,1,1);

\end{tikzpicture}
\caption{The $2$nd iterated hyperplanes of a $3$-cube} \label{fig:iterated-hyperplanes-2nd}
\end{subfigure} \quad
\begin{subfigure}{0.2\textwidth}
\centering
\begin{tikzpicture}[3D, baseline=0]

\draw [sketch edge] (1,0,0) -- (1,1,0) -- (0,1,0) (1,1,0) -- (1,1,1);

\node [vertex, inner sep=1.3pt] at (0.5,0.5,0.5) {};

\draw [sketch edge] (0,0,0) -- (1,0,0) -- (1,0,1) -- (0,0,1) -- (0,1,1) -- (0,1,0) -- (0,0,0) -- (0,0,1) (1,0,1) -- (1,1,1) -- (0,1,1);

\end{tikzpicture}
\caption{The $3$rd iterated hyperplane of a $3$-cube} \label{fig:iterated-hyperplanes-3rd}
\end{subfigure}
\caption{Hyperplanes and iterated hyperplanes of a cube} \label{fig:iterated-hyperplanes}
\end{figure}

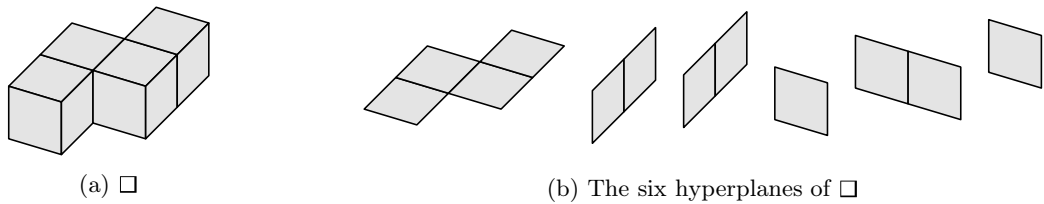
\begin{figure}
\centering
\begin{subfigure}{0.2\textwidth}
\centering
\begin{tikzpicture}[3D, scale=0.7, baseline=0]

\filldraw [face] (0,0,1) -- (1,0,1) -- (1,1,1) -- (0,1,1) -- cycle;
\filldraw [face] (1,0,1) -- (2,0,1) -- (2,1,1) -- (1,1,1) -- cycle;
\filldraw [face] (1,-1,1) -- (2,-1,1) -- (2,0,1) -- (1,0,1) -- cycle;
\filldraw [face] (0,0,0) -- (0,1,0) -- (0,1,1) -- (0,0,1) -- cycle;
\filldraw [face] (0,0,0) -- (1,0,0) -- (1,0,1) -- (0,0,1) -- cycle;
\filldraw [face] (1,-1,0) -- (1,0,0) -- (1,0,1) -- (1,-1,1) -- cycle;
\filldraw [face] (1,-1,0) -- (2,-1,0) -- (2,-1,1) -- (1,-1,1) -- cycle;
\filldraw [face] (2,-1,1) -- (2,0,1) -- (3,0,1) -- (3,-1,1) -- cycle;
\filldraw [face] (2,-1,0) -- (2,-1,1) -- (3,-1,1) -- (3,-1,0) -- cycle;


\end{tikzpicture}
\caption{$\Squelta$}
\end{subfigure} \quad
\begin{subfigure}{0.7\textwidth}
\centering
\begin{tikzpicture}[3D, baseline=0]

\matrix [cells={scale=0.7}, column sep=1em]
{
	\foreach \x/\y/\z in {0/0/0, 1/0/0, 1/-1/0, 2/-1/0}
		\filldraw [face] (\x,\y,\z+0.5) -- +(1,0,0) -- +(1,1,0) -- +(0,1,0) -- cycle;
	&
	\foreach \x/\y/\z in {0/0/0, 1/0/0}
		\filldraw [face] (\x,\y+0.5,\z) -- +(1,0,0) -- +(1,0,1) -- +(0,0,1) -- cycle;
	&
	\foreach \x/\y/\z in {1/-1/0, 2/-1/0}
		\filldraw [face] (\x,\y+0.5,\z) -- +(1,0,0) -- +(1,0,1) -- +(0,0,1) -- cycle;
	&
	\foreach \x/\y/\z in {0/0/0}
		\filldraw [face] (\x+0.5,\y,\z) -- +(0,1,0) -- +(0,1,1) -- +(0,0,1) -- cycle;
	&
	\foreach \x/\y/\z in {1/0/0, 1/-1/0}
		\filldraw [face] (\x+0.5,\y,\z) -- +(0,1,0) -- +(0,1,1) -- +(0,0,1) -- cycle;
	&
	\foreach \x/\y/\z in {2/-1/0}
		\filldraw [face] (\x+0.5,\y,\z) -- +(0,1,0) -- +(0,1,1) -- +(0,0,1) -- cycle;
	\\
};

\end{tikzpicture}
\caption{The six hyperplanes of $\Squelta$}
\end{subfigure}
\caption{A cubical complex and its hyperplanes} \label{fig:hyperplanes}
\end{figure}

\begin{figure}

\end{figure}

Since each hyperplane of a cubical complex is itself a cubical complex, it also has its own hyperplanes. Therefore, we will define an \emph{iterated hyperplane} of $\Squelta$ to be a hyperplane of a hyperplane of \dots of $\Squelta$. We will say the original hyperplanes are the ``$1$st iterated hyperplanes'', the hyperplanes of the hyperplanes are the ``$2$nd iterated hyperplanes'', and so on. See \crefrange{fig:iterated-hyperplanes-0th}{fig:iterated-hyperplanes-3rd}. As these figures show, an iterated hyperplane can sometimes be obtained in more than one way, when considered as a subspace of $\Squelta$ --- for example, each line segment in \cref{fig:iterated-hyperplanes-2nd} is a hyperplane of two different squares in \cref{fig:iterated-hyperplanes-1st}.

The following proposition reveals the connection between co-skeletons and iterated hyperplanes.

\begin{prop} \label{thm:cubical-coskeleton-simeq-iterated-hyperplanes}
Suppose $\Squelta$ is a cubical complex. Then $\abs{\bary(\Squelta) - \bary(\skel_k \Squelta)}$ is equal to the union of the $(k+1)$th iterated hyperplanes of $\Squelta$, considered as subspaces of $\Squelta$.
\end{prop}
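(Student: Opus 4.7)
The plan is to verify the claimed equality of subsets of $\abs{\Squelta}$ pointwise: for each $p \in \abs{\Squelta}$, I will show that $p \in \abs{\bary(\Squelta) - \bary(\skel_k \Squelta)}$ if and only if $p$ lies on some $(k+1)$th iterated hyperplane of $\Squelta$. Both conditions depend only on $p$'s position within the unique cube $F$ of $\Squelta$ whose relative interior contains $p$: the simplex of $\bary(\Squelta)$ containing $p$ has all vertices $v_\sigma$ with $\sigma \subseteq F$, and an iterated hyperplane of $\Squelta$ restricts in any cube $C$ to an iterated hyperplane of $C$ (or is empty). So I reduce to analyzing a single cube $C = F$, which I identify with $[0,1]^r$. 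The faces of $C$ are then indexed by sign vectors in $\{0, 1, *\}^r$, where $*$-entries are the \emph{free} coordinates and the dimension of a face equals its number of free coordinates.

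The key observation is that the barycenter $v_\sigma$ of a face $\sigma$ of $C$ has $i$-th coordinate equal to $1/2$ at precisely the free entries of $\sigma$'s sign vector, and equal to $0$ or $1$ (matching the sign vector) at the fixed entries; in particular, $v_\sigma$ has exactly $\dim \sigma$ coordinates equal to $1/2$. Let $\sigma_0 \subsetneq \cdots \subsetneq \sigma_m$ be the chain of faces of $C$ such that $p$ lies in the relative interior of the simplex $\{v_{\sigma_0}, \ldots, v_{\sigma_m}\}$ of $\bary(C)$, so $p = \sum_j t_j v_{\sigma_j}$ with all $t_j > 0$.

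The heart of the argument is the identity $\dim \sigma_0 = \#\{i : p_i = 1/2\}$. If $i$ is free in $\sigma_0$, then $i$ is free in every $\sigma_j$ (since $\sigma_0 \subseteq \sigma_j$), so every $v_{\sigma_j}$ has $i$-th coordinate $1/2$, giving $p_i = 1/2$. Conversely, if $i$ is fixed in $\sigma_0$ with value $\epsilon \in \{0, 1\}$, then $i$ is fixed with value $\epsilon$ in an initial segment $\sigma_0, \ldots, \sigma_{j_0 - 1}$ and free in the remaining $\sigma_{j_0}, \ldots, \sigma_m$, so $p_i$ is a strictly positive convex combination of $\epsilon$ and $1/2$, and in particular is not equal to $1/2$.

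To conclude: $p \in \abs{\bary(\Squelta) - \bary(\skel_k \Squelta)}$ holds if and only if every $v_{\sigma_j}$ is retained after deleting $\bary(\skel_k \Squelta)$, i.e., $\dim \sigma_j \geq k+1$ for all $j$, which, since the chain is nested with $\sigma_0$ smallest, is equivalent to $\dim \sigma_0 \geq k+1$. By the identity, this is in turn equivalent to $p$ having at least $k+1$ coordinates equal to $1/2$ in $C$, which is exactly the condition for $p$ to lie on some $(k+1)$th iterated hyperplane of $C$. The main technical difficulty lies in the bookkeeping with sign vectors and the convex-combination argument in the key identity; once that is in hand, the rest is a direct unpacking of definitions and the assembly across cubes of $\Squelta$.
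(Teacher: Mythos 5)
Your proof is correct and follows essentially the same strategy as the paper's: reduce to a single cube, observe that the barycentre $v_\sigma$ has exactly $\dim\sigma$ coordinates equal to $\tfrac12$, and characterize both $\abs{\bary(\Squelta) - \bary(\skel_k \Squelta)}$ and the union of $(k+1)$th iterated hyperplanes by the condition "at least $k+1$ coordinates equal $\tfrac12$." The paper phrases this as a two-sided set inclusion using convexity, whereas you package it as the pointwise identity $\dim\sigma_0 = \#\{i : p_i = \tfrac12\}$ for the minimal element $\sigma_0$ of the chain carrying $p$ — a slightly cleaner formulation, but the same underlying mechanism.
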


For example, compare \cref{fig:iterated-hyperplanes-1st} and \cref{fig:bary-hyperplanes}. Note that in combination with \cref{thm:coskel-simeq-bary}, this proposition tells us that $\abs{\skel^c_k \Squelta}$ is homotopy equivalent to the union of the $(k+1)$th iterated hyperplanes.

\begin{proof}[Proof of \cref{thm:cubical-coskeleton-simeq-iterated-hyperplanes}]
We will show that the intersections of $\abs{\bary(\Squelta) - \bary(\skel_k \Squelta)}$ and of the union of the $(k+1)$th iterated hyperplanes with each cube of $\Squelta$ are the same.

\begin{figure}
\centering
\begin{tikzpicture}[3D, face/.append style={opacity=1}, scale=1.8]

\coordinate (v00*) at (0,0,0.5);
\coordinate (v01*) at (0,1,0.5);
\coordinate (v10*) at (1,0,0.5);
\coordinate (v11*) at (1,1,0.5);
\coordinate (v0*0) at (0,0.5,0);
\coordinate (v0*1) at (0,0.5,1);
\coordinate (v1*0) at (1,0.5,0);
\coordinate (v1*1) at (1,0.5,1);
\coordinate (v*00) at (0.5,0,0);
\coordinate (v*01) at (0.5,0,1);
\coordinate (v*10) at (0.5,1,0);
\coordinate (v*11) at (0.5,1,1);
\coordinate (v0**) at (0,0.5,0.5);
\coordinate (v1**) at (1,0.5,0.5);
\coordinate (v*0*) at (0.5,0,0.5);
\coordinate (v*1*) at (0.5,1,0.5);
\coordinate (v**0) at (0.5,0.5,0);
\coordinate (v**1) at (0.5,0.5,1);
\coordinate (v***) at (0.5,0.5,0.5);

\draw [sketch edge] (1,0,0) -- (1,1,0) -- (0,1,0) (1,1,0) -- (1,1,1);

\foreach \f/\e in {*10/**0, *10/*1*, 1*0/1**, 1*0/**0, 11*/1**, 11*/*1*}
	\filldraw [face] (v\f)--(v\e)--(v***)--cycle;
\foreach \f in {*10, 1*0, 11*}
	\node [vertex] at (v\f) {};

\foreach \f/\e in {*00/**0, *00/*0*, 0*0/**0, 0*0/0**, 01*/*1*, 01*/0**, *11/**1, *11/*1*, 1*1/**1, 1*1/1**, 10*/*0*, 10*/1**}
	\filldraw [face] (v\f)--(v\e)--(v***)--cycle;
\foreach \f in {**0, *1*, 1**}
	\node [vertex] at (v\f) {};

\foreach \f/\e in {00*/*0*, 00*/0**, 0*1/**1, 0*1/0**, *01/**1, *01/*0*}
	\filldraw [face] (v\f)--(v\e)--(v***)--cycle;
\draw [sketch edge] (0,0,0) -- (1,0,0) -- (1,0,1) -- (0,0,1) -- (0,1,1) -- (0,1,0) -- (0,0,0) -- (0,0,1) (1,0,1) -- (1,1,1) -- (0,1,1);
\foreach \f in {*00, 0*0, 01*, *11, 1*1, 10*, 00*, 0*1, *01, *0*, 0**, **1, ***}
	\node [vertex] at (v\f) {};

\end{tikzpicture}
\caption{$\bary([0,1]^3) - \bary(\skel_0 [0,1]^3)$} \label{fig:bary-hyperplanes}
\end{figure}

First, consider the union of the $(k+1)$th hyperplanes. In each cube, identified with $[0,1]^r$, a single $(k+1)$th iterated hyperplane is the result of setting $(k+1)$ of the coordinates to be $\frac{1}{2}$; therefore, the union of all $(k+1)$th iterated hyperplanes is the space
\begin{equation*}
Z \coloneqq \left\{ x \in \abs*{[0,1]^r} : \text{at least $k+1$ coordinates of $x$ are $\tfrac{1}{2}$} \right\}.
\end{equation*}

Now, consider $\abs{\bary(\Squelta) - \bary(\skel_k \Squelta)}$. The intersection of this space with some cube of $\Squelta$, identified with $\abs{[0,1]^r}$, is
\begin{equation*}
B \coloneqq \abs*{\bary([0,1]^r) - \bary(\skel_k [0,1]^r)}.
\end{equation*}
For each face $\sigma$ of $[0,1]^r$, we have a vertex $v_\sigma$ of $\bary([0,1]^r)$ at the barycenter of $\sigma$; the coordinates of $v_\sigma$ will all be $0$, $1$ or $\frac{1}{2}$, and the number of coordinates equal to $\frac{1}{2}$ is $\dim \sigma$. The remaining vertices of $\bary([0,1]^r)$ after deleting $\bary(\skel_k [0,1]^r)$, therefore, are precisely those which have at least $k+1$ coordinates equal to $\frac{1}{2}$.

For the rest of this proof, we will show that $B = Z$. First, we claim that $B \subseteq Z$. For each face $\{ v_{\sigma_1}, \dotsc, v_{\sigma_m} \}$ of $B$, the set $\{\sigma_1, \dotsc, \sigma_m\}$ is a chain, so it has a minimal element --- without loss of generality, say this is $\sigma_1$. By construction, the dimension of $\sigma_1$ is at least $k+1$, so there is some index set $I$ of at least $k+1$ elements so that $(v_{\sigma_1})_i = \frac{1}{2}$ for all $i \in I$. Since $\{\sigma_1, \dotsc, \sigma_m\}$ is a chain, every $v_{\sigma_j}$ for $j = 1, \dotsc, m$ must also satisfy $(v_{\sigma_j})_i = \frac{1}{2}$ for all $i \in I$. Therefore, the entire set $\{ v_{\sigma_1}, \dotsc, v_{\sigma_m} \}$ lies in the affine subspace $\{ x \in \mathbb R^r : \text{$x_i = \frac{1}{2}$ for all $i \in I$} \}$, so the convex hull of this set is contained in $Z$. Thus $B \subseteq Z$.

Conversely, let $z$ be a point in $Z$. By definition, there is an index set $I$ of size $k+1$ so that $z_i = \frac{1}{2}$ for all $i \in I$. The set of all points of $[0,1]^r$ satisfying $x_i = \frac{1}{2}$ for $i \in I$ is an iterated hyperplane of $[0,1]^r$, and its barycentric subdivision is the subcomplex of $\bary([0,1]^r) - \bary(\skel_k [0,1]^r)$ generated by vertices satisfying $(v_\sigma)_i = \frac{1}{2}$ for $i \in I$. Therefore, $z$ is in $B$, so $Z \subseteq B$.

In summary, the space $\abs{\bary(\Squelta) - \bary(\skel_k \Squelta)}$ and the union of the $(k+1)$th iterated hyperplanes agree within each cube of $\Squelta$, so they must be the same subspace.
\end{proof}

There is a class of cubical complexes where hyperplanes and co-skeletons work particularly well. In general, the link of any vertex in a cubical complex is isomorphic as a poset to the non-empty faces of a simplicial complex. A cubical complex is \emph{CAT(0)} if:
\begin{itemize}
	\item it is simply connected, and
	\item the link of every vertex is isomorphic to a \emph{flag} simplicial complex.
\end{itemize}
A simplicial complex is flag if the vertex set of every clique in its $1$-skeleton is the vertex set of a face.

CAT(0) cubical complexes were first studied from the perspective of metric spaces: equivalently, a cubical complex is CAT(0) if and only if it is simply connected and has non-positive curvature at every point (see \citet[Theorem~5.4 \& Theorem~5.18]{book:Bridson-Haefliger}; see also \citet{art:Gromov}). However, finite CAT(0) cubical complexes are also interesting from a purely combinatorial and topological perspective \citep{art:Ardila-geodesics, art:Hagen-hyperbolicity, art:Sageev, art:Roller, art:Rowlands-CAT0}.

One particular construction associated to a CAT(0) cubical complex is its \emph{crossing complex}, introduced in \citet{art:Rowlands-CAT0}: given a CAT(0) cubical complex $\Squelta$, its crossing complex, here denoted $\cross(\Squelta)$, is a simplicial complex whose vertices are in bijection with the hyperplanes of $\Squelta$, and a set of vertices forms a face if the intersection of the corresponding hyperplanes is non-empty. (In other words, the crossing complex is the nerve of the hyperplanes, as defined in \cref{thm:nerve}.)

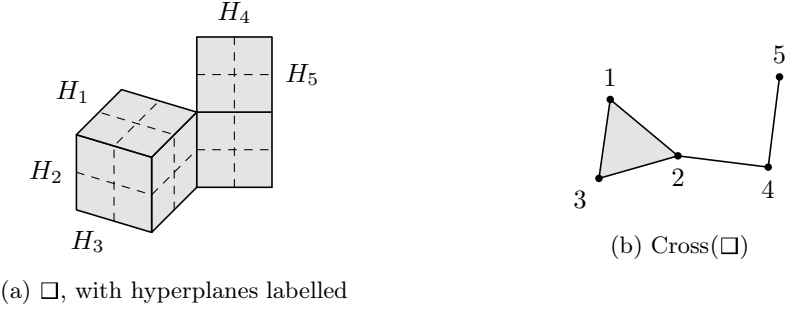
\begin{figure}
\centering
\begin{subfigure}{0.4\textwidth}
\centering
\begin{tikzpicture}

\filldraw [face, 3D] (0,0,0) -- (-1,0,0) -- (-1,0,1) -- (0,0,1) -- cycle;
\filldraw [face, 3D] (-1,0,0) -- (-1,1,0) -- (-1,1,1) -- (-1,0,1) -- cycle;
\filldraw [face, 3D] (-1,0,1) -- (0,0,1) -- (0,1,1) -- (-1,1,1) -- cycle;
\filldraw [face] (0,0) rectangle (1,1);
\filldraw [face] (0,1) rectangle (1,2);

\draw [hyperplane, 3D] (-0.5,0,0)--(-0.5,0,1)--(-0.5,1,1) node [label={[label distance=-5pt]above left:$H_1$}] {} (-1,0.5,0) node [label={[label distance=-5pt]below left:$H_3$}] {} --(-1,0.5,1)--(0,0.5,1) (-1,1,0.5) node [label={[label distance=-2pt]left:$H_2$}] {} --(-1,0,0.5)--(0,0,0.5);
\draw [hyperplane] (0,0.5)--(1,0.5) (0,1.5)--(1,1.5) node [label={[label distance=-2pt]right:$H_5$}] {} (0.5,0)--(0.5,2) node [label={[label distance=-2pt]above:$H_4$}] {};

\end{tikzpicture}
\caption{$\Squelta$, with hyperplanes labelled}
\end{subfigure}
\begin{subfigure}{0.4\textwidth}
\centering
\begin{tikzpicture}[scale=1.5]

\node [coordinate] (v1) at (0.1,0.7) {};
\node [coordinate] (v2) at (0.7,0.2) {};
\node [coordinate] (v3) at (0,0) {};
\node [coordinate] (v4) at (1.5,0.1) {};
\node [coordinate] (v5) at (1.6,0.9) {};

\filldraw [face] (v1)--(v2)--(v3)--(v1);
\draw [edge] (v2)--(v4)--(v5);

\node [vertex, label=above:$1$] at (v1) {};
\node [vertex, label=below:$2$] at (v2) {};
\node [vertex, label=below left:$3$] at (v3) {};
\node [vertex, label=below:$4$] at (v4) {};
\node [vertex, label=above:$5$] at (v5) {};

\end{tikzpicture}
\caption{$\cross(\Squelta)$}
\end{subfigure}
\caption{A CAT(0) cubical complex and its crossing complex}
\end{figure}

We record here some basic facts about CAT(0) cubical complexes and their crossing complexes.

\begin{lma}[{\citep[Corollary~1.5]{book:Bridson-Haefliger}, \citep[Section~4]{art:Gromov}}] \label{thm:CAT(0)-contractible}
Any CAT(0) cubical complex is contractible (not just simply connected).
\end{lma}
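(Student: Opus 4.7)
The plan is to invoke the metric-geometric characterization of CAT(0) cubical complexes that the paper mentions just before the statement: a cubical complex is CAT(0) (in the combinatorial sense of simply connected with flag vertex links) if and only if, equipped with its natural piecewise Euclidean metric, it is simply connected and has non-positive curvature in the sense of Alexandrov (this is Gromov's theorem). So the problem reduces to showing that a simply connected cubical complex of non-positive curvature is contractible.

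The central tool is the Cartan--Hadamard theorem for metric spaces: a complete, simply connected, locally CAT(0) geodesic metric space is globally CAT(0). A finite cubical complex is compact and thus complete in the piecewise Euclidean metric, so Cartan--Hadamard applies and upgrades the local CAT(0) condition (equivalent, by Gromov, to the flag link condition) to the global one. In particular, between any two points of $\abs{\Squelta}$ there is a unique geodesic, and geodesics vary continuously with their endpoints.

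From global CAT(0) geometry, contractibility is immediate. Fix any basepoint $x_0 \in \abs{\Squelta}$ and define
\begin{equation*}
H \colon \abs{\Squelta} \times [0,1] \to \abs{\Squelta}, \qquad H(x,t) = \gamma_x(1-t),
\end{equation*}
where $\gamma_x \colon [0,1] \to \abs{\Squelta}$ is the unique constant-speed geodesic from $x_0$ to $x$. Continuous dependence of geodesics on endpoints in a CAT(0) space guarantees that $H$ is continuous, and by construction $H(x,0) = x$ and $H(x,1) = x_0$, so $H$ is a deformation retraction onto $\{x_0\}$ and $\abs{\Squelta}$ is contractible.

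The only real obstacle is the metric-geometric input that Gromov's flag link criterion implies local CAT(0), and that Cartan--Hadamard promotes this to global CAT(0); but both are standard results in the cited references \citep{book:Bridson-Haefliger, art:Gromov}, so the combinatorial argument above is essentially bookkeeping on top of them.
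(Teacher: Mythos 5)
Your proposal is correct and reconstructs exactly the standard argument that the paper is implicitly deferring to by citing \citet[Corollary~1.5]{book:Bridson-Haefliger} and \citet[Section~4]{art:Gromov}: Gromov's flag-link criterion gives local CAT(0), Cartan--Hadamard upgrades it to global CAT(0) (completeness holding because the paper's complexes are finite, hence compact), and then the geodesic retraction to a basepoint gives contractibility. Since the paper states this lemma without proof, there is nothing to compare beyond noting that your route is the one the cited references take.
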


\begin{lma}[{\citep[Lemma~2.15]{art:Hagen-hyperbolicity}}] \label{thm:crossing-complex-flag}
$\cross(\Squelta)$ is always a flag simplicial complex. In particular, it is the clique complex of the ``crossing graph'' defined by Hagen \citet[Definition~2.16]{art:Hagen-hyperbolicity}.
\end{lma}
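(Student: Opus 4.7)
The plan is to show that whenever hyperplanes $H_1, \dotsc, H_m$ of $\Squelta$ pairwise cross, they have a common intersection point; this is precisely the flag condition for the nerve $\cross(\Squelta)$. Equivalently, I would produce an $m$-cube of $\Squelta$ whose bisecting hyperplanes are exactly $H_1, \dotsc, H_m$, since the barycentre of such a cube lies in every $H_i$.

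I would proceed by induction on $m$, with base case $m = 2$ being the definition of crossing. For the inductive step, the hypothesis applied to $H_2, \dotsc, H_m$ produces an $(m-1)$-cube $C \subseteq \Squelta$ whose bisecting hyperplanes are exactly $H_2, \dotsc, H_m$. Pick any vertex $v$ of $C$: the $m-1$ edges of $C$ at $v$ are dual to $H_2, \dotsc, H_m$ and determine an $(m-2)$-simplex in $\lk_\Squelta(v)$, which is flag by the definition of CAT(0). If I can find a vertex $v$ of $C$ and an edge $e$ at $v$ dual to $H_1$ that pairwise spans a square with each of the edges of $C$ at $v$, then flagness of $\lk_\Squelta(v)$ upgrades these pairwise squares to a full $(m-1)$-simplex in the link, corresponding to an $m$-cube through $v$ bisected by $H_1, \dotsc, H_m$.

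The substantive content is producing such a vertex $v$ and edge $e$. I would rely on the well-developed theory of hyperplane carriers in CAT(0) cube complexes: the carrier $N(H_1)$, defined as the subcomplex of cubes meeting $H_1$, is a convex subcomplex, and $H_1$ crossing $H_j$ forces $N(H_1) \cap N(H_j)$ to contain a square with one edge dual to each. Combining these facts with the combinatorial median/projection structure on CAT(0) cube complexes, I would show that $N(H_1)$ meets $C$ at some vertex $v$, which supplies an edge $e$ dual to $H_1$ at $v$. The pairwise-square condition is then automatic: for each $j$, the edges at $v$ dual to $H_1$ and to $H_j$ respectively span a square because $H_1$ and $H_j$ cross at $v$'s neighbourhood in their shared carrier.

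The main obstacle is verifying this carrier-meets-$C$ step, which depends on the special convex-geometric properties of CAT(0) cube complexes; a slicker alternative would be to route through the Sageev--Roller halfspace framework, where pairwise crossing of $H_1, \dotsc, H_m$ directly implies that all $2^m$ sign patterns $\bigcap H_i^{\epsilon_i}$ are non-empty, from which the desired $m$-cube can be read off.
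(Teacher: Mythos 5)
The paper does not prove this lemma --- it is cited to Hagen's Lemma~2.15 --- so there is no internal proof to compare against. What you sketch is the standard argument behind that citation: in a CAT(0) cubical complex, pairwise-crossing hyperplanes $H_1, \dotsc, H_m$ in fact all cross, i.e.\ there is an $m$-cube dual to all of them, and this is precisely what flagness of the nerve amounts to.

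Your inductive route via carriers is viable, but it leans on two substantial facts that you state as if they were immediate. First, as you acknowledge, one must show that $N(H_1)$ actually meets the cube $C$; this can be done by observing that any hyperplane $K$ separating the convex subcomplexes $N(H_1)$ and $C$ would have to cross every $H_j$, $j \geq 2$ (each of which meets both), contradicting that $K$ separates $N(H_1)$ from anything --- but that reasoning is not in your write-up. Second, and more importantly, your claim that ``the pairwise-square condition is then automatic'' is not automatic at all: it is exactly the statement that CAT(0) cube complexes admit no \emph{inter-osculating} hyperplanes (a pair that both cross somewhere and osculate at a vertex). This is a genuine lemma --- it is one of the four hyperplane pathologies whose absence characterises special cube complexes in the sense of Haglund--Wise, and its truth in the CAT(0) setting requires an argument via convexity of carriers. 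As written, your sketch asserts it without proof, which is where the real content lies. Your alternative via the Sageev--Roller halfspace/pocset framework is the cleaner standard route and avoids the osculation issue entirely, but note that ``directly implies that all $2^m$ sign patterns are non-empty'' is really an invocation of the Helly property for halfspaces (convex subcomplexes) in a median graph, and that extracting the $m$-cube from the $2^m$ inhabited quadrants also takes a short projection or median argument. Both routes do work and would reproduce the cited result; the proposal correctly identifies the structure of the proof but should cite or prove the no-inter-osculation lemma (or the Helly lemma, respectively) rather than treating them as free.
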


\begin{lma}[{\citep[Proposition~6.4, Corollary~6.5]{art:Rowlands-CAT0}}] \label{thm:CAT(0)-facets-bijection}
There is a bijection between the facets of $\Squelta$ and the facets of $\cross(\Squelta)$. If a facet of $\Squelta$ has dimension $k$, the corresponding facet of $\cross(\Squelta)$ has dimension $k-1$. As a consequence, $\dim \cross(\Squelta) = \dim \Squelta - 1$, and $\Squelta$ is pure if and only if $\cross \Squelta$ is pure.
\end{lma}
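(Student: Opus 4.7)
The plan is to construct the bijection explicitly. Given a facet $\sigma$ of $\Squelta$ of dimension $k$, identify $\sigma$ with $[0,1]^k$; exactly $k$ hyperplanes of $\Squelta$ pass through $\sigma$, one corresponding to each coordinate direction $x_i = \tfrac{1}{2}$. These hyperplanes pairwise intersect inside $\sigma$ itself, so by the definition of $\cross(\Squelta)$ as the nerve of the hyperplanes (\cref{thm:crossing-complex-flag}), they span a $(k-1)$-dimensional simplex $\phi(\sigma) \in \cross(\Squelta)$. This defines a map $\phi$ from facets of $\Squelta$ to faces of $\cross(\Squelta)$, and already gives the dimension shift claimed.

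The central tool for the remainder is a Helly-type property of hyperplanes in CAT(0) cubical complexes: a collection of hyperplanes $\{H_1, \dotsc, H_m\}$ pairwise intersects if and only if there is an $m$-cube of $\Squelta$ each of whose $m$ hyperplanes is one of the $H_i$ (this is a standard consequence of the CAT(0) condition via Sageev's machinery; see \citet{art:Sageev} or the treatment in \citet{art:Rowlands-CAT0}). Granting this, I would show $\phi(\sigma)$ is a facet of $\cross(\Squelta)$. Suppose for contradiction that $\phi(\sigma)$ sits inside a larger face; then there is a hyperplane $H$ crossing every hyperplane of $\phi(\sigma)$ but with $H \notin \phi(\sigma)$. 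The enlarged collection is still pairwise intersecting, so the Helly property produces a $(k+1)$-cube $\sigma'$ whose hyperplane set contains $\phi(\sigma) \cup \{H\}$. Since a cube is uniquely determined by the set of hyperplanes through it (together with the choice of side for each hyperplane not through it), and $\sigma$ is the unique $k$-face of $\sigma'$ whose hyperplanes are exactly the hyperplanes of $\sigma'$ other than $H$ on the appropriate side of $H$, one concludes $\sigma \subset \sigma'$, contradicting the maximality of $\sigma$ in $\Squelta$.

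For surjectivity and injectivity of $\phi$, take a facet $F = \{H_{i_1}, \dotsc, H_{i_m}\}$ of $\cross(\Squelta)$; these hyperplanes pairwise meet, so the Helly property yields an $m$-cube $\sigma$ with $\phi(\sigma) = F$. If $\sigma$ were contained in a strictly larger cube $\sigma''$, then $\phi(\sigma'')$ would be a face of $\cross(\Squelta)$ properly containing $F$, contradicting the maximality of $F$; hence $\sigma$ is a facet of $\Squelta$. Injectivity follows from the same uniqueness principle used above: two cubes of the same dimension with the same hyperplane set must coincide. Finally, the purity statement is immediate from the dimension shift $k \mapsto k-1$: all facets of $\Squelta$ have dimension $d$ iff all facets of $\cross(\Squelta)$ have dimension $d-1$. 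The main obstacle in this plan is justifying the Helly-type property, but since it is established in the cited earlier paper, the argument reduces to the bookkeeping sketched above.
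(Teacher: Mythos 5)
This lemma is cited by the paper from an external source rather than proved here, so there is no in-paper argument to compare against; evaluating your proposal on its own merits, the map $\phi$ (facet $\mapsto$ set of hyperplanes through it) and the Helly property are the right ingredients, and the dimension-shift and purity conclusions are immediate once the bijection is established. But two steps contain genuine gaps. In the argument that $\phi(\sigma)$ is a facet, you invoke Helly to produce a $(k+1)$-cube $\sigma'$ with hyperplane set $\phi(\sigma) \cup \{H\}$ and then conclude $\sigma \subset \sigma'$; that conclusion does not follow, and your justification---that $\sigma$ is ``the unique $k$-face of $\sigma'$'' with a prescribed hyperplane set on the appropriate side of $H$---presupposes that $\sigma$ is already a face of $\sigma'$, which is exactly what needs proof. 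The Helly cube $\sigma'$ could a priori lie far from $\sigma$: its two $k$-faces perpendicular to $H$ share $\sigma$'s hyperplane set but may differ from $\sigma$ in their sides relative to other hyperplanes. Separately, the injectivity argument rests on the assertion ``two cubes of the same dimension with the same hyperplane set must coincide,'' which is false: the two opposite $k$-faces of any $(k+1)$-cube perpendicular to a hyperplane $H$ have identical hyperplane sets and are distinct. This is not the uniqueness principle you state earlier, which carries the extra datum ``the choice of side for each hyperplane not through it'' that you drop here.

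Both gaps are repairable by a convexity argument that Helly alone does not supply. For injectivity: if $S$ is a maximal pairwise-crossing set and $C$, $C'$ are both cubes with hyperplane set $S$, then any hyperplane $L$ separating $C$ from $C'$ must cross every $K \in S$ (each $K$ is convex and meets both $C$ and $C'$), contradicting maximality; hence $C = C'$. For the facet claim, one must show that if $H$ crosses every member of $\phi(\sigma)$ then some cube adjacent to $\sigma$ is crossed by $H$; this uses the gate projection of $\sigma$ onto the carrier of $H$ and an induction on separating walls, not just the pairwise-intersection statement. This extra work is presumably the content of the cited proposition, and your write-up currently elides it.
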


\begin{lma}[{\citep[Theorem~4.11]{art:Sageev}}] \label{thm:CAT(0)-hyperplanes-CAT(0)}
Every hyperplane (and thus every iterated hyperplane) in a CAT(0) cubical complex is also CAT(0).
\end{lma}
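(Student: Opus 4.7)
The statement is due to Sageev, and my proof plan essentially outlines his argument adapted to our setting. To verify that a hyperplane $H$ in a CAT(0) cubical complex $\Squelta$ is itself CAT(0), I would check the two defining conditions separately: that the simplicial link of every vertex of $H$ is flag, and that $H$ is simply connected.

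The flag-link condition would be the easier step. A vertex $v_e$ of $H$ sits at the midpoint of some edge $e$ of $\Squelta$, and the $r$-cubes of $H$ incident to $v_e$ correspond bijectively to the $(r+1)$-cubes of $\Squelta$ containing $e$ (via the operation of restricting to the hyperplane bisecting the direction of $e$). This bijection should identify the simplicial link of $v_e$ in $H$ with the simplicial link of the vertex corresponding to $e$ inside the simplicial link of either endpoint of $e$ in $\Squelta$. Since $\Squelta$ is CAT(0), the latter vertex link is flag, and any simplicial link of a face in a flag complex is again flag; hence the link of $v_e$ in $H$ is flag.

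The main obstacle is simple-connectedness, and this is where Sageev's theorem really does the work. The approach I would take is to show that $H$ is two-sided with a carrier that decomposes as a product $H \times [-\tfrac{1}{2}, \tfrac{1}{2}]$, and that $H$ separates $\abs{\Squelta}$ into exactly two open components. Once this separation statement is in hand, a van Kampen argument using the contractibility of $\abs{\Squelta}$ (from \cref{thm:CAT(0)-contractible}) together with the simple connectedness of each of the two sides (which in turn follows from the same product/separation structure applied inductively or from Sageev's analysis of the combinatorial structure of halfspaces) yields $\pi_1(H) = 0$. Alternatively, one may invoke the equivalence between the combinatorial CAT(0) condition and the metric CAT(0) condition from \citet{book:Bridson-Haefliger}: hyperplanes are convex (totally geodesic) subsets of the CAT(0) metric space $\abs{\Squelta}$, hence CAT(0) metric spaces themselves, and then translate back to the combinatorial side via Gromov's link condition.

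Finally, the statement for iterated hyperplanes follows by a straightforward induction on the depth of iteration: if $H$ is a CAT(0) cubical complex and $H'$ is a hyperplane of $H$, then $H'$ is CAT(0) by the same argument, and so on.
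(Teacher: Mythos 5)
The paper does not prove this lemma; it is invoked as a citation to Sageev, so I will assess your proposed argument on its own terms.

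Your flag-link step is correct. A vertex of $H$ is the midpoint $v_e$ of an edge $e$ of $\Squelta$, the $r$-cubes of $H$ through $v_e$ biject with the $(r+1)$-cubes of $\Squelta$ containing $e$, and this identifies the link of $v_e$ in $H$ with the link of the vertex corresponding to $e$ inside the (flag) vertex link of an endpoint of $e$. Since the link of a vertex in a flag complex is again flag, the link of $v_e$ in $H$ is flag.

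The van Kampen step for simple connectedness, however, has a genuine logical gap as stated: knowing that $\abs{\Squelta}$ is simply connected (even contractible), that $H$ separates it into two pieces, and that both pieces are simply connected does \emph{not} imply that $H$ is simply connected. The equator of $S^2$ is the standard counterexample: both hemispheres are contractible, $S^2$ is simply connected, but the equator is not. Formally, if $A \cup B$ is covered by open sets $A, B$ with $A \cap B \simeq H$, Seifert--van Kampen gives a pushout $\pi_1(A) *_{\pi_1(A\cap B)} \pi_1(B) \cong \pi_1(A \cup B)$; when $\pi_1(A) = \pi_1(B) = 1$ the pushout is trivial regardless of $\pi_1(A\cap B)$, so you cannot extract $\pi_1(H)$ this way. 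The alternative route you mention at the end is the one that actually works and is the standard argument: the carrier $N(H) \cong H \times [-\tfrac{1}{2},\tfrac{1}{2}]$ is a \emph{convex} subcomplex of $\abs{\Squelta}$ (this is the real content of Sageev's theorem), a convex subset of a CAT(0) space is CAT(0) and hence contractible, and $H$ is a deformation retract of its carrier, so $H$ is contractible and in particular simply connected. Combined with your flag-link argument, this gives that $H$ is CAT(0); the iterated case then follows by induction as you say. I would drop the van Kampen route and keep only the convexity argument.
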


\citet[p.~9]{art:Ardila-geodesics} described a way to embed a CAT(0) cubical complex with $m$ hyperplanes into the cube $[0,1]^m$. This embedding is essentially canonical, up to the choice of a ``root vertex'' and a choice of labelling for the hyperplanes as $H_1, \dotsc, H_m$.

\begin{lma}[{\citep[Proposition~5.7]{art:Rowlands-CAT0}}] \label{thm:CAT(0)-hyperplanes-from-cube-embedding}
In this embedding of $\Squelta$ into $[0,1]^m$, the hyperplane $H_i$ of $\Squelta$ is the intersection of $\Squelta$ with the $i$th hyperplane of $[0,1]^m$.
\end{lma}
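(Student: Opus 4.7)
The plan is to unpack Ardila's embedding $\phi \colon \abs{\Squelta} \hookrightarrow [0,1]^m$ and verify the hyperplane identification one cube at a time. Recall the construction: fix a root vertex $v_0$ of $\Squelta$ and send each vertex $v$ to the vector $\phi(v) \in \{0,1\}^m$ with $\phi(v)_i = 1$ iff $v$ and $v_0$ lie on opposite sides of $H_i$. This extends affinely over each cube to a piecewise-linear embedding into $[0,1]^m$.

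First, I would record the key \emph{edge property} of the embedding: for an edge $e = [v,w]$ of $\Squelta$, the images $\phi(v)$ and $\phi(w)$ differ in exactly one coordinate, namely the index $i$ of the unique hyperplane $H_i$ dual to $e$. This uses the standard CAT(0) fact that each hyperplane $H_i$ separates $\abs{\Squelta}$ into two connected halfspaces, so the only edges whose endpoints lie on opposite sides of $H_i$ are the edges that $H_i$ crosses at their midpoints.

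Next, I would bootstrap from edges to cubes. A cube $C$ of $\Squelta$ of dimension $r$ has its edges dual to exactly $r$ distinct hyperplanes $H_{i_1}, \ldots, H_{i_r}$ (parallel edges of $C$ are dual to the same hyperplane). By the edge property, the $2^r$ vertices of $C$ map to the $r$-subcube of $\{0,1\}^m$ whose free coordinates are $i_1, \ldots, i_r$ and whose remaining coordinates are constant. Extending affinely, $\phi(C)$ is the solid subcube of $[0,1]^m$ in which $x_{i_1}, \ldots, x_{i_r}$ vary over $[0,1]$ and the other coordinates are fixed. Under this identification, the local coordinate of $C$ perpendicular to the slice of $H_{i_j}$ inside $C$ is precisely $x_{i_j}$; so the piece of $H_{i_j}$ lying in $C$ equals $C \cap \{x_{i_j} = 1/2\}$ inside $[0,1]^m$.

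Finally, I would assemble the conclusion by taking unions over cubes. If $C$ is dual to $H_i$, it contributes the same slice $C \cap \{x_i = 1/2\}$ to both $H_i$ and $\abs{\Squelta} \cap \{x_i = 1/2\}$; if $C$ is not dual to $H_i$, then $x_i$ is constant on $\phi(C)$ (taking value $0$ or $1$), so $\phi(C) \cap \{x_i = 1/2\} = \emptyset$, matching the fact that $H_i$ does not enter $C$. Summing over all cubes of $\Squelta$ gives $H_i = \abs{\Squelta} \cap \{x_i = 1/2\}$. The main obstacle is the edge property in the first step, which rests on the halfspace/separation theorem for hyperplanes in CAT(0) cubical complexes: this is the non-trivial geometric input. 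Once it is in hand, the rest is bookkeeping cube by cube, using that the embedding is affine on cubes.
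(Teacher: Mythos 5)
The paper does not prove this lemma; it cites it directly as Proposition~5.7 of \citet{art:Rowlands-CAT0}, so there is no in-paper argument to compare against. Judged on its own, your proposal is a correct and natural proof: the crucial input is the halfspace separation property (each hyperplane $H_i$ cuts $\abs{\Squelta}$ into two connected halfspaces, so $\phi(v)_i$ and $\phi(w)_i$ differ across an edge $[v,w]$ exactly when $H_i$ is the hyperplane dual to that edge), and once you have the edge property, the cube-by-cube bookkeeping --- that the $r$ edge-directions of an $r$-cube are dual to $r$ \emph{distinct} hyperplanes in a CAT(0) complex, hence each cube $\phi(C)$ is an honest coordinate subcube, and $\phi(C)\cap\{x_i=\tfrac12\}$ is empty or equal to $H_i\cap C$ accordingly --- goes through. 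One small point worth making explicit if you write this up: you should say why the $2^r$ vertices of $C$ land bijectively on the $2^r$ vertices of the coordinate subcube, which follows from the edge property plus the consistency of ``which side of $H_{i_j}$'' across $C$; this is exactly where the no-self-intersection behaviour of hyperplanes in a CAT(0) complex is used. Otherwise this reads as a complete proof, and it is almost certainly the same proof given in the cited source.
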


The resulting embedding of a hyperplane $H_i$ into the $i$th hyperplane of $[0,1]^m$ agrees with Ardila et al.'s canonical embedding of $H_i$ as a CAT(0) cubical complex into a cube; therefore, the iterated hyperplanes of $\Squelta$ are themselves each an intersection of $\Squelta$ with some set of hyperplanes of $[0,1]^m$. Therefore, if $S$ is a subset of $\{1, \dotsc, m\}$ of size $(k+1)$, we define
\begin{align*}
H_S & \coloneqq \Squelta \cap \bigcap_{i \in S} (\text{$i$th hyperplane of $[0,1]^m$}) \\
& = \bigcap_{i \in S} \big( \Squelta \cap (\text{$i$th hyperplane of $[0,1]^m$}) \big) = \bigcap_{i \in S} H_i. 
\end{align*}
Either this is a $(k+1)$th iterated hyperplane of $\Squelta$, or it is empty, and all $(k+1)$th iterated hyperplanes of $\Squelta$ are of this form. By definition of $\cross(\Squelta)$, $H_S$ is an iterated hyperplane if and only if $S$ is a face of $\cross(\Squelta)$; therefore, the $(k+1)$th iterated hyperplanes of $\Squelta$ correspond to $k$-faces of $\cross(\Squelta)$, for $k = 0, \dotsc, d-1$.

We can now begin to prove some new results about CAT(0) cubical complexes. The following proposition generalises \citet[Theorem~6.3]{art:Rowlands-CAT0}, which was the case of $k=0$.

\begin{prop} \label{thm:CAT(0)-coskeleton-simeq-crossing-coskeleton}
If $\Squelta$ is a $d$-dimensional CAT(0) cubical complex and $\Delta \coloneqq \cross(\Squelta)$ is its crossing complex, then $\abs{\skel^c_k \Squelta} \simeq \abs{\skel^c_{k-1} \Delta}$ for all $k = 0, \dotsc, d$.
\end{prop}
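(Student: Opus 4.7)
The plan is to identify both sides with a common auxiliary simplicial complex $N$ by applying the Nerve theorem (\cref{thm:nerve}) twice. I expect $N$ to have vertex set $\Delta_k$, with a collection $\{S_1, \dotsc, S_m\} \subseteq \Delta_k$ declared to be a face of $N$ exactly when the vertex union $S_1 \cup \dotsb \cup S_m$ is a face of $\Delta$.

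First, for $\abs{\skel^c_k \Squelta}$: combining \cref{thm:coskel-simeq-bary} with \cref{thm:cubical-coskeleton-simeq-iterated-hyperplanes} shows that $\abs{\skel^c_k \Squelta}$ is homotopy equivalent to the union $U$ of the $(k+1)$th iterated hyperplanes of $\Squelta$, which the discussion preceding the statement identifies with the family $\{H_S : S \in \Delta_k\}$, where $H_S \coloneqq \bigcap_{i \in S} H_i$. These sets form a closed cover of $U$; each is contractible by \cref{thm:CAT(0)-hyperplanes-CAT(0),thm:CAT(0)-contractible}; and their intersections satisfy $H_{S_1} \cap \dotsb \cap H_{S_m} = H_{S_1 \cup \dotsb \cup S_m}$, which is non-empty (and then again a contractible iterated hyperplane) precisely when $S_1 \cup \dotsb \cup S_m \in \Delta$, by the defining property of $\cross(\Squelta)$ as the nerve of the hyperplanes. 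The closed-set version of the Nerve theorem therefore gives $U \simeq \abs{N}$.

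Second, for $\abs{\skel^c_{k-1} \Delta} = \abs{\Delta} \setminus \abs{\skel_{k-1} \Delta}$: I cover this open subspace of $\abs{\Delta}$ by the open stars $\{\abs{\st_\Delta S} : S \in \Delta_k\}$. Every point lies in the relative interior of some face $\tau \in \Delta$ of dimension at least $k$, and hence in $\abs{\st_\Delta S}$ for any $k$-face $S \subseteq \tau$, so this is a valid cover. Each such open star is contractible, and $\abs{\st_\Delta S_1} \cap \dotsb \cap \abs{\st_\Delta S_m}$ equals $\abs{\st_\Delta (S_1 \cup \dotsb \cup S_m)}$ when $S_1 \cup \dotsb \cup S_m \in \Delta$ and is empty otherwise. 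A second application of the Nerve theorem produces the same combinatorial $N$, giving $\abs{\skel^c_{k-1} \Delta} \simeq \abs{N}$, and chaining the two equivalences completes the proof.

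The only delicate point is checking that the two Nerve constructions really yield the same abstract simplicial complex; this reduces to the identities $H_{S_1} \cap \dotsb \cap H_{S_m} = H_{S_1 \cup \dotsb \cup S_m}$ on the cubical side and $\abs{\st_\Delta S_1} \cap \dotsb \cap \abs{\st_\Delta S_m} = \abs{\st_\Delta(S_1 \cup \dotsb \cup S_m)}$ on the simplicial side, both of which are essentially formal. Everything else is a direct appeal to previously-established machinery, most crucially the fact that iterated hyperplanes of CAT(0) cubical complexes are themselves CAT(0) and hence contractible.
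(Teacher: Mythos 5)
Your proposal is correct and follows essentially the same argument as the paper: both apply the Nerve theorem once to the closed cover of $\abs{\skel^c_k \Squelta}$ by iterated hyperplanes $H_S$ (contractible since they are CAT(0)), once to the open cover of $\abs{\skel^c_{k-1}\Delta}$ by open stars $\abs{\st_\Delta S}$, and then observe the two nerves coincide. The only cosmetic difference is that the paper treats $k = d$ separately as a degenerate case (both sides empty), while your argument handles it implicitly via an empty cover.
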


\begin{proof}
If $k = d$, then both $\abs{\skel^c_d \Squelta}$ and $\abs{\skel^c_{d-1} \Delta}$ are empty, since $\dim \Delta = d-1$ by \cref{thm:CAT(0)-facets-bijection}. Therefore, for the rest of this proof, assume that $k < d$.

We will apply the nerve theorem (\cref{thm:nerve}) to each of $\abs{\skel^c_k \Squelta}$ and $\abs{\skel^c_{k-1} \Delta}$, to show that they are both homotopy equivalent to the same nerve.

First, consider $\abs{\skel^c_k \Squelta}$, and define the family $\mathcal H \coloneqq \{ H_\sigma : \sigma \in \Delta_k \}$, where $H_\sigma \coloneqq \bigcap_{i \in \sigma} H_i$ as above. By \cref{thm:cubical-coskeleton-simeq-iterated-hyperplanes} and the above discussion, $\abs{\skel^c_k \Squelta}$ is homotopy equivalent to the union of $\mathcal H$. Each $H_\sigma$ is a closed set in $\Squelta$, and by \cref{thm:cubical-coskeleton-simeq-iterated-hyperplanes} their union is a triangulable space. Any intersection of spaces in $\mathcal H$ is itself an iterated hyperplane if it is non-empty, which is CAT(0) by \cref{thm:CAT(0)-hyperplanes-CAT(0)} and thus contractible by \cref{thm:CAT(0)-contractible}. Therefore, we may apply the nerve theorem to $\mathcal H$, and conclude that $\abs{\skel^c_k \Squelta} \simeq \abs{N(\mathcal H)}$.

But what is $N(\mathcal H)$? The vertices are in bijection with $k$-faces $\sigma$ of $\Delta$. A set of vertices, say $\{ \sigma_1, \dotsc, \sigma_m \}$, forms a face of $N(\mathcal H)$ whenever the intersection $H_{\sigma_1} \cap \dotsb \cap H_{\sigma_m}$ is non-empty. This intersection is the set
\begin{align*}
H_{\sigma_1} \cap \dotsb \cap H_{\sigma_m} & = \bigcap_{i \in \sigma_1} H_i \cap \dotsb \cap \bigcap_{i \in \sigma_m} H_i \\
& = \bigcap_{i \in \sigma_1 \cup \dotsb \cup \sigma_m} H_i \\
& = H_{\sigma_1 \cup \dotsb \cup \sigma_m}.
\end{align*}
So $\{ \sigma_1, \dotsc, \sigma_m \}$ is a face of $N(\mathcal H)$ if and only if $\sigma_1 \cup \dotsb \cup \sigma_m$ is a face of $\Delta$.

Now, let us turn our attention to $\skel^c_{k-1} \Delta$, the set of faces of $\Delta$ of dimension at least $k$. For each $k$-face $\sigma$ of $\Delta$, let $S_\sigma$ denote $\abs{\st_\Delta \sigma}$, and define $\mathcal S \coloneqq \{ S_\sigma : \sigma \in \Delta_k \}$. Every face of $\Delta$ of dimension at least $k$ is contained in $\st_\Delta \sigma$ for some $k$-face $\sigma$, and no face of dimension less than $k$ is, so the union of $\mathcal S$ is $\abs{\skel^c_{k-1} \Delta}$. Each $S_\sigma$ is open in $\Delta$, and any non-empty intersection of stars is itself a star and thus contractible. Therefore, we may apply the nerve theorem to $\mathcal S$ as well.

Let us examine $N(\mathcal S)$. The vertices are in bijection with $k$-faces $\sigma$ of $\Delta$, and a set $\{ \sigma_1, \dotsc, \sigma_m \}$ forms a face of $N(\mathcal S)$ whenever $\st_\Delta \sigma_1 \cap \dotsb \cap \st_\Delta \sigma_m$ is non-empty. This happens if and only if there exists a face of $\Delta$ that contains all of $\sigma_1, \dotsc, \sigma_m$, which happens whenever $\sigma_1 \cup \dotsb \cup \sigma_m$ is a face of $\Delta$.

But this means that $N(\mathcal S) = N(\mathcal H)$! Therefore,
\begin{equation*}
\abs{\skel^c_k \Squelta} \simeq \abs{N(\mathcal H)} = \abs{N(\mathcal S)} \simeq \abs{\skel^c_{k-1} \Delta}. \qedhere
\end{equation*}
\end{proof}

Next, we will examine some applications of this result, but first we need one more construction.

There is a standard bijection from the non-empty faces $\sigma$ of the cube $[0,1]^r$ to vectors $\chi^\sigma$ in $\{0,1,*\}^r$, where the $i$th coordinate of $\chi^\sigma$ is $0$ or $1$ if all points $x$ in $\sigma$ have $x_i = 0$ or $x_i = 1$ respectively, and $\chi^\sigma_i$ is $*$ otherwise. If $\sigma$ and $\tau$ are two faces, then $\sigma \subseteq \tau$ if and only if $\chi^\sigma_i \preceq \chi^\tau_i$ for all $i$, where $0 \preceq *$ and $1 \preceq *$ are the only non-equality relations on $0$, $1$ and $*$.

Suppose $\Delta$ is an abstract simplicial complex with vertex set $\{1, \dotsc, n\}$. We define the \emph{cubical cone} over $\Delta$, denoted $\cone(\Delta)$, to be the subcomplex of $[0,1]^n$ consisting of all faces $\sigma$ such that the set $\big\{ i : \chi^\sigma_i \in \{1,*\} \big\}$ is a face of $\Delta$ (including the empty face). For example, see \cref{fig:cubical-cone}. The reason for the name ``cubical cone'' is the following lemma, analogous to properties of a simplicial cone.

\begin{lma}
The cubical cone over $\Delta$ is the unique subcomplex of a cube (up to isomorphism) with the property that there is a vertex $v_0$ (the ``cone point'', which is the point $(0,\dotsc, 0)$ in the standard labelling) such that:
\begin{itemize}
	\item every facet contains $v_0$, and
	\item the link of $v_0$ is (isomorphic as a poset to the non-empty faces of) $\Delta$.
\end{itemize}
\end{lma}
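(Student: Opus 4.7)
The plan is to verify that $\cone(\Delta)$ satisfies the two conditions (existence), then show that any subcomplex $\Squelta$ of a cube $[0,1]^N$ satisfying them is isomorphic to $\cone(\Delta)$ (uniqueness). The key normalization, used in both parts, is that after applying a coordinate-flipping automorphism of $[0,1]^N$ we may assume $v_0 = (0, \dots, 0)$; then every face $\sigma$ containing $v_0$ has $\chi^\sigma \in \{0, *\}^N$.

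For existence, I would first argue that a facet $\sigma$ of $\cone(\Delta)$ cannot have any coordinate $\chi^\sigma_i = 1$ (changing it to $*$ would give a strictly larger face with the same $\{1,*\}$-support, still in $\cone(\Delta)$), so $\chi^\sigma \in \{0,*\}^n$; maximality of $\{i : \chi^\sigma_i = *\}$ in $\Delta$ then corresponds exactly to $\sigma$ being a facet. In particular, $(0, \dots, 0)$ lies in every facet. The faces strictly above the cone point in $\cone(\Delta)$ are precisely the sub-cubes of $[0,1]^n$ whose $*$-support is a non-empty face of $\Delta$, so $\sigma \mapsto \{i : \chi^\sigma_i = *\}$ gives the required link isomorphism.

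For uniqueness, I would identify the $n$ edges of $\Squelta$ through $v_0$ (which correspond to vertices of $\lk_\Squelta v_0 \cong \Delta$) with coordinate axes of the ambient cube, producing an injection $\phi: \{1, \dots, n\} \hookrightarrow \{1, \dots, N\}$. The link-poset isomorphism then forces every face of $\Squelta$ containing $v_0$ to match, under relabeling via $\phi$, a face of $\cone(\Delta)$: the face corresponding to $\tau \in \Delta$ is the unique sub-cube with $\chi_{\phi(j)} = *$ for $j \in \tau$ and $\chi_i = 0$ elsewhere. The main obstacle is handling faces of $\Squelta$ not containing $v_0$ --- one must confirm they still only involve coordinates in $\phi(\{1, \dots, n\})$. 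This follows because every face lies inside a facet, every facet contains $v_0$ and so has $\chi$-vector supported on some $\phi(\tau)$, and subfaces' $\chi$-vectors are obtained by replacing some $*$s with $0$s or $1$s. Relabeling the coordinates of the ambient cube via $\phi$ then realizes $\Squelta$ as a subcomplex of $[0,1]^n$ with exactly the facets of $\cone(\Delta)$, completing the isomorphism.
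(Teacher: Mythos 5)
Your proof is correct and follows essentially the same strategy as the paper: verify existence by observing that facets of $\cone(\Delta)$ can only have $0$ or $*$ entries (so they all contain the origin), and that the $\{0,*\}$-faces biject with $\Delta$; then prove uniqueness by normalizing $v_0$ to the origin and matching coordinates so that $\lk_{\Squelta} v_0$ is identified with $\lk_{\cone(\Delta)} v_0$. The one place where you are more careful than the paper is the coordinate bookkeeping for uniqueness: the paper says ``use the symmetry of the cube to permute the coordinates'' without addressing the possibility that the ambient cube of $\Squelta$ has more than $n$ coordinates, whereas your explicit injection $\phi\colon\{1,\dots,n\}\hookrightarrow\{1,\dots,N\}$ and the observation that faces of facets can only have nonzero $\chi$-entries inside $\phi(\{1,\dots,n\})$ closes this gap cleanly. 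This is a welcome improvement in rigor, not a different argument.
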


\begin{proof}
First, let us check that the cubical cone does satisfy these properties. If $\sigma$ is a face of $\cone(\Delta)$, we can replace every $1$ in $\chi^\sigma$ with $*$ to obtain the vector of a face of $\cone(\Delta)$ that contains $\sigma$; therefore, the vectors of facets of $\cone(\Delta)$ consist only of $0$ and $*$, so every facet contains $v_0 = (0,\dotsc,0)$. Conversely, the faces that properly contain $v_0$ are precisely the faces $\sigma \neq v_0$ such that $\chi^\sigma$ consists of $0$ and $*$, and these faces are in bijection with non-empty faces of $\Delta$, so the link of $v_0$ is indeed $\Delta$.

Now, suppose $\Squelta$ is an arbitrary subcomplex of a cube satisfying these properties. By the symmetry of the cube, we may assume that $v_0$ is the vertex $(0,\dotsc,0)$. As before, the link of $v_0$ is then the set of faces of $\Squelta$ whose vectors consist only of $0$ and $*$; since we assume this set is isomorphic to $\Delta$, we may again use the symmetry of the cube to permute the coordinates and assume that this set is equal to $\lk_{\cone(\Delta)} v_0$. Since $\Squelta$ contains these faces of the cube, it must also contain every sub-face of these faces, so $\Squelta$ contains every face of $\cone(\Delta)$. Moreover, it cannot contain any other faces: any such face must be contained in some facet of $\Squelta$, but that facet cannot be one that we have already accounted for in $\lk_\Squelta v_0$, so the facet would not contain $v_0$. Therefore, $\Squelta \cong \cone(\Delta)$.
\end{proof}

While every flag simplicial complex is the crossing complex of some CAT(0) cubical complex (see \citet[Proposition~2.19]{art:Hagen-hyperbolicity} and \citet[Lemma~3.3]{art:Rowlands-CAT0}, where the construction is the cubical cone), in general, there can be many CAT(0) cubical complexes with the same crossing complex. However, we can now prove that if $\cross(\Squelta)$ is a connected manifold, the only possibility is $\Squelta = \cone(\Delta)$.

\begin{figure}
\centering
\begin{subfigure}{0.3\textwidth}
\centering
\begin{tikzpicture}[scale=1.2]

\foreach \x/\y [count=\i] in {0.5/1.8, -0.1/0.9, 1.0/1.1, 1.2/0.2, 0.0/0.0}
	\coordinate (v\i) at (\x,\y);

\filldraw [face] (v1)--(v2)--(v3)--cycle;
\draw [edge] (v3)--(v4)--(v5)--(v2);

\foreach \pos [count=\i] in {above, left, above right, below right, below left}
	\node [vertex, label=\pos:$\i$] at (v\i) {};

\end{tikzpicture}
\caption{$\Delta$}
\end{subfigure}
\begin{subfigure}{0.5\textwidth}
\centering
\begin{tikzpicture}[3D, scale=1.5]

\foreach \x/\y/\z in {0/0/0, 0/1/0, 1/0/0, 1/1/1}
	\filldraw [face] (\x,\y,\z) -- +(1,0,0) -- +(1,1,0) -- +(0,1,0) -- cycle;
\filldraw [face] (1,1,0) -- (2,1,0) -- (2,1,1) -- (1,1,1) -- cycle;
\filldraw [face] (1,1,0) -- (1,2,0) -- (1,2,1) -- (1,1,1) -- cycle;

\foreach \x/\y/\z/\coord/\pos in {1/1/0/v_0 = 00000/above, 1/1/1/10000/above, 1/2/0/01000/above left, 2/1/0/00100/above right, 1/0/0/00010/below right, 0/1/0/00001/below left, 1/2/1/11000/above left, 2/1/1/10100/above right, 2/0/0/00110/right, 0/0/0/00011/below, 0/2/0/01001/left, 2/2/1/11100/above}
	\node [vertex, label={[fill=white, fill opacity=0.7, text opacity=1]\pos:{\small$\coord$}}] at (\x,\y,\z) {};

\end{tikzpicture}
\caption{The cubical cone over $\Delta$ (embedded into $\mathbb R^3$), with vertices labelled}
\end{subfigure}
\caption{A cubical cone} \label{fig:cubical-cone}
\end{figure}
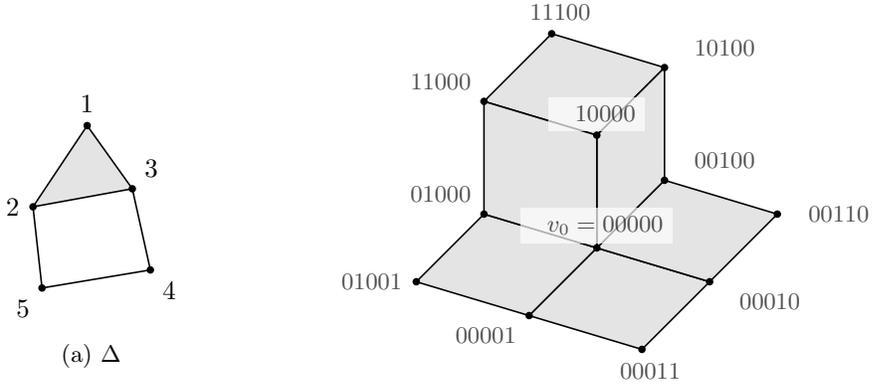

\begin{crl}
Let $\Squelta$ be a $d$-dimensional CAT(0) cubical complex and $\Delta$ its crossing complex. If $\Delta$ is a connected homology manifold, then $\Squelta$ is the cubical cone over $\Delta$.
\end{crl}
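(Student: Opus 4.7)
The plan is to apply the characterization lemma stated just before this corollary: it suffices to produce a vertex $v_0 \in \Squelta$ that lies in every facet of $\Squelta$ and satisfies $\lk_\Squelta v_0 \cong \Delta$. To do so, I will use the embedding $\Squelta \hookrightarrow [0,1]^m$ of \cref{thm:CAT(0)-hyperplanes-from-cube-embedding}, where $m = f_0(\Delta)$. By \cref{thm:CAT(0)-facets-bijection}, each facet $F$ of $\Delta$ (a set of $d$ mutually crossing hyperplanes) corresponds to a unique $d$-cube $\sigma_F$ of $\Squelta$, which sits inside $[0,1]^m$ with coordinates $i \in F$ free and coordinates $i \notin F$ fixed at values $\chi_{F,i} \in \{0,1\}$. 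The candidate cone point is $v_0 \in \{0,1\}^m$ defined by $v_{0,i} = \chi_{F,i}$ whenever $i \notin F$; the heart of the proof is to show that this rule is well-defined, i.e., that for each vertex $v_i$ of $\Delta$, all facets $F$ of $\Delta$ with $v_i \notin F$ yield the same value $\chi_{F,i}$.

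I will split well-definedness into a local and a global step. Locally, suppose facets $F_1, F_2$ of $\Delta$ share a $(d-2)$-face $F$. The homology-manifold hypothesis forces $\lk_\Delta F$ to be a homology $0$-sphere, so $F$ lies in exactly the two facets $F_1, F_2$. The iterated hyperplane $H_F = \bigcap_{j \in F} H_j$ is $1$-dimensional and, by \cref{thm:CAT(0)-hyperplanes-CAT(0)} together with \cref{thm:CAT(0)-contractible}, a tree; its edges correspond to $d$-cubes of $\Squelta$ crossed by all of $F$, equivalently to facets of $\Delta$ containing $F$. Hence $H_F$ is a tree with exactly two edges, which must meet at a common middle vertex. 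This middle vertex corresponds to a single $(d-1)$-cube of $\Squelta$ that is a codimension-$1$ face of both $\sigma_{F_1}$ and $\sigma_{F_2}$. Two $d$-subcubes of $[0,1]^m$ sharing a $(d-1)$-face must agree on all their common fixed coordinates, so $\chi_{F_1,i} = \chi_{F_2,i}$ for every $i \notin F_1 \cup F_2$.

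Globally, I need any two facets of $\Delta$ not containing $v_i$ to be joinable by a chain of such facets with consecutive terms sharing a $(d-2)$-face. The case $d = 1$ is trivial since $\Delta$ is then a single point. For $d \geq 2$, $\abs{\Delta} \setminus \{v_i\}$ is path-connected (a connected manifold of positive dimension minus one point), so by \cref{thm:deletion-homotopy-equivalence} so is $\abs{\Delta - v_i}$; after perturbing a path between interior points of two facets off the $(d-3)$-skeleton so that it crosses only codimension-$1$ faces transversally, one reads off the desired facet chain (all of whose facets avoid $v_i$). Splicing the local equalities along this chain shows $\chi_{F,i}$ is constant over facets $F \not\ni v_i$, defining $v_{0,i}$ (with arbitrary choice in the degenerate case that every facet contains $v_i$, which can only occur when $d = 1$).

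By construction $v_0 \in \sigma_F$ for every facet $F$, so $v_0$ is a vertex of $\Squelta$ contained in every facet. To conclude $\lk_\Squelta v_0 = \Delta$, note that both are flag simplicial complexes (by the CAT(0) hypothesis and \cref{thm:crossing-complex-flag}), so it suffices to match their $1$-skeletons: purity places every hyperplane in some facet of $\Squelta$, which contains $v_0$, so every hyperplane is adjacent to $v_0$; and a pair $\{H_i, H_j\}$ spans a square at $v_0$ iff $\{H_i, H_j\} \in \Delta$, one direction by definition of the crossing complex, the other by finding a facet of $\Delta$ containing $\{H_i, H_j\}$ and restricting $\sigma_F$ to a subsquare at $v_0$. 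The characterization lemma then gives $\Squelta \cong \cone(\Delta)$. The main obstacle I anticipate is making the global-step connectivity argument fully rigorous — in particular verifying that $\Delta - v_i$ is pure of dimension $d-1$ (again using the $0$-sphere link of codim-$1$ faces) and that the perturbation off the $(d-3)$-skeleton is clean.
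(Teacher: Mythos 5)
Your proof is correct in outline, but it takes a genuinely different and considerably longer route than the paper. The paper's argument is a short homological one that leans on the machinery already built: applying the long exact sequence of \cref{thm:LES} with $k=0$ and $\mathbb{Z}/2$ coefficients, the contractibility of $\abs{\Squelta}$ kills both ends, so $\bigoplus_{v \in \Squelta_0} \tilde H_{d-1}(\abs{\lk v}) \cong \tilde H_{d-1}(\abs{\skel^c_0 \Squelta}) \cong \tilde H_{d-1}(\abs{\Delta}) \cong \mathbb{Z}/2$ via \cref{thm:CAT(0)-coskeleton-simeq-crossing-coskeleton}. Hence exactly one vertex $v_0$ has $\tilde H_{d-1}(\abs{\lk v_0}) \neq 0$. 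Since vertex links in a CAT(0) cubical complex are \emph{induced} subcomplexes of $\Delta$ (\citep[Lemma~4.9]{art:Rowlands-CAT0}), and the only induced subcomplex of a connected closed $(d-1)$-homology manifold with nonzero top homology is the whole thing, $\lk v_0 = \Delta$; the facet bijection of \cref{thm:CAT(0)-facets-bijection} then forces every facet to contain $v_0$. No coordinate-chasing or transversality is needed.

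By contrast, you construct the cone vertex $v_0$ explicitly in the cube embedding. Your local step (two facets of $\Delta$ sharing a $(d-2)$-face $F$ force a shared $(d-1)$-cube because $H_F$ is a tree with exactly two edges) is clean and correct, and your endgame identifying $\lk_\Squelta v_0$ with $\Delta$ via flagness and a comparison of $1$-skeletons is sound. What this buys you is self-containedness: you re-derive the needed portion of the induced-subcomplex fact rather than citing it, and the argument is almost purely combinatorial. What it costs is the global well-definedness step, and here the gap you flag is real: passing from path-connectedness of $\abs{\Delta - v_i}$ to a dual-graph chain of facets (strong connectivity of $\Delta - v_i$) genuinely requires work. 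Path-connectedness of a pure complex does not imply strong connectivity in general, so you cannot skip directly to the perturbed-path argument; you must either (i) verify purity of $\Delta - v_i$ and that its facet adjacency graph is connected (purity follows as you suspect from the homology-sphere condition on links, but the adjacency-graph connectivity needs an argument, e.g.\ using that $\abs{\Delta - v_i}$ deformation retracts onto a $(d-1)$-dimensional pseudomanifold with boundary), or (ii) carry out the simplicial general-position argument in full. Neither is hard, but as written it is the weakest joint of the proof. Given that the paper's LES approach eliminates all of this, it is worth knowing that the cited Lemma 4.9 makes the high-tech route essentially immediate.
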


\begin{proof}
Examine this section of the long exact sequence from \cref{thm:LES}, taking $k = 0$ and using coefficients in $\mathbb Z/2$:
\begin{equation*}
\begin{tikzcd}
\tilde H_d(\abs{\skel^c_{-1} \Squelta}) \rar & \bigoplus_{v \in \Squelta_0} \tilde H_{d-1}(\abs{\lk v}) \rar & \tilde H_{d-1}(\abs{\skel^c_0 \Squelta}) \rar & \tilde H_{d-1}(\abs{\skel^c_{-1} \Squelta}).
\end{tikzcd}
\end{equation*}
Note that $\abs{\skel^c_{-1} \Squelta}$ is simply $\abs{\Squelta}$, which is contractible since $\Squelta$ is CAT(0), so the first and last terms of this sequence are $0$. By \cref{thm:CAT(0)-coskeleton-simeq-crossing-coskeleton}, $\tilde H_{d-1}(\abs{\skel^c_0 \Squelta})$ is isomorphic to $\tilde H_{d-1}(\abs{\skel^c_{-1} \Delta}) = \tilde H_{d-1}(\abs{\Delta})$, and since $\Delta$ is a connected homology manifold of dimension $d-1$, this homology group is $\mathbb Z/2$. Therefore,
\begin{equation*}
\bigoplus_{v \in \Squelta_0} \tilde H_{d-1}(\abs{\lk v}) \cong \mathbb Z/2.
\end{equation*}
Thus there must be one vertex $v_0$ for which $\tilde H_{d-1}(\abs{\lk v_0}) = \mathbb Z/2$, and all other vertices have $\tilde H_{d-1}(\abs{\lk v}) = 0$.

Every vertex link in a CAT(0) cubical complex is an induced subcomplex of the crossing complex \citep[Lemma~4.9]{art:Rowlands-CAT0}. However, the only induced subcomplex of a connected $(d-1)$-manifold with non-zero homology in degree $d-1$ is the entire manifold. Therefore, $\lk v_0 = \Delta$.

\Cref{thm:CAT(0)-facets-bijection} says that there is a bijection between the facets of the cubical complex and the facets of its crossing complex. In this case, since $\lk v_0 = \Delta$, we also have a bijection between the facets of the crossing complex $\Delta$ and facets of $\Squelta$ that contain $v_0$. Therefore, every facet of $\Squelta$ must contain $v_0$. Hence $\Squelta$ is a cubical cone.
\end{proof}

We conclude this paper by combining \cref{thm:CAT(0)-coskeleton-simeq-crossing-coskeleton} with \cref{thm:CM-characterisation,thm:stacked-characterisation,thm:Leray-characterisation}, to compare the combinatorial properties discussed in \cref{sec:families} for $\Squelta$ and $\cross(\Squelta)$.

\begin{thm} \label{thm:CAT(0)-property-iff-crossing}
Suppose $\Squelta$ is a CAT(0) cubical complex and $\Delta$ is its crossing complex. Then:
\begin{enumerate}[(a)]
	\item \label{thm:CAT(0)-CM-iff-crossing} $\Squelta$ is Cohen--Macaulay if and only if $\Delta$ is Cohen--Macaulay.
	\item \label{thm:CAT(0)-Leray-iff-crossing} $\Squelta$ is $r$-Leray if and only if $\Delta$ is $r$-Leray.
	\item \label{thm:CAT(0)-stacked-iff-crossing} If $\Squelta$ and $\Delta$ are both homology balls, then $\Squelta$ is $s$-stacked if and only if $\Delta$ is $s$-stacked.
\end{enumerate}
\end{thm}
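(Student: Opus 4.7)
The plan is to derive all three equivalences as formal consequences of three earlier results: the characterizations \cref{thm:CM-characterisation,thm:Leray-characterisation,thm:stacked-characterisation}, each of which phrases the property in question purely in terms of vanishing of $\tilde H_\bullet(\abs{\skel^c_k X})$; the homotopy equivalence $\abs{\skel^c_k \Squelta} \simeq \abs{\skel^c_{k-1} \Delta}$ of \cref{thm:CAT(0)-coskeleton-simeq-crossing-coskeleton}; and the facet bijection \cref{thm:CAT(0)-facets-bijection}, which gives $\dim \Delta = d - 1$ (writing $d = \dim \Squelta$) and equates purity of $\Squelta$ with purity of $\Delta$. Under the substitution $j = k - 1$, the co-skeleton equivalence matches the $\Squelta$-condition at $k \in \{0, \dotsc, d\}$ with the $\Delta$-condition at $j \in \{-1, \dotsc, d - 1\}$, which is exactly the full range of indices appearing in each characterization for $\Delta$.

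For part \emph{(a)}, the Cohen--Macaulay bound $i < d - k - 1$ from \cref{thm:CM-characterisation} becomes $i < (d - 1) - j - 1$, which is exactly the bound for $\Delta$; purity transfers via \cref{thm:CAT(0)-facets-bijection}. For part \emph{(b)}, the bound $i \geq r$ in \cref{thm:Leray-characterisation} is dimension-free and matches up directly. For part \emph{(c)}, the degree $d - k - 1$ becomes $(d - 1) - j - 1$ and the range $k \leq d - s - 1$ becomes $j \leq (d - 1) - s - 1$, matching \cref{thm:stacked-characterisation} for $\Delta$.

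The only index not covered by \cref{thm:CAT(0)-coskeleton-simeq-crossing-coskeleton} is $k = -1$, where $\abs{\skel^c_{-1} \Squelta} = \abs{\Squelta}$; this is contractible by \cref{thm:CAT(0)-contractible}, so every reduced homology vanishes and the $k = -1$ condition in parts \emph{(a)} and \emph{(b)} holds automatically. In part \emph{(c)}, the edge cases $k = -1$ for $\Squelta$ and $j = -1$ for $\Delta$ are likewise automatic, because both spaces are assumed to be homology balls and hence have $\tilde H_\bullet(\abs{\Squelta}) = \tilde H_\bullet(\abs{\Delta}) = 0$; additionally the $k = 0$ case for $\Squelta$ translates under the co-skeleton equivalence into the $j = -1$ condition $\tilde H_{d-1}(\abs{\Delta}) = 0$, again automatic from the homology-ball hypothesis on $\Delta$. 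The chief subtlety — more a bookkeeping task than a real obstacle — is keeping the dimension shift $j = k - 1$ consistent and verifying that the range endpoints align correctly; once that is done, all three equivalences follow mechanically from the inputs above.
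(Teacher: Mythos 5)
Your proof is correct and follows essentially the same route as the paper: combine the coskeleton characterizations (\cref{thm:CM-characterisation,thm:Leray-characterisation,thm:stacked-characterisation}) with the homotopy equivalence $\abs{\skel^c_k \Squelta} \simeq \abs{\skel^c_{k-1} \Delta}$ and the facet bijection, shift indices by $j = k-1$, and discharge the $k = -1$ case separately. If anything you handle the $k = -1$ endpoint in part (c) a bit more carefully than the paper does, noting explicitly that it is automatic from the homology-ball hypothesis rather than implicitly stretching \cref{thm:CAT(0)-coskeleton-simeq-crossing-coskeleton} to that index.
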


Note that the assumption that both $\Squelta$ and $\Delta$ are balls in \ref{thm:CAT(0)-stacked-iff-crossing} is essential: it is possible for one to be a ball and the other not. For example, if $\Squelta$ consists of $m$ squares glued in a $1 \times m$ rectangle, it is a $1$-stacked ball, but its crossing complex is a star graph with $m$ pendants, which is not a ball for $m > 2$.

\begin{proof}[Proof of \cref{thm:CAT(0)-property-iff-crossing}]
\begin{enumerate}[(a)]
	\item Let $d = \dim \Squelta$. By \cref{thm:CM-characterisation}, $\Squelta$ is Cohen--Macaulay if and only if it is pure and $\tilde H_i(\abs{\skel^c_k \Squelta}) = 0$ for all $k = -1, \dotsc, d$ and all $i < d-k-1$. Any CAT(0) cubical complex is always contractible, so $\tilde H_i(\abs{\skel^c_{-1} \Squelta})$ is guaranteed to be $0$, hence we only need to consider $k = 0, \dotsc, d$. Now, \cref{thm:CAT(0)-coskeleton-simeq-crossing-coskeleton} says that $\tilde H_i(\abs{\skel^c_k \Squelta}) = \tilde H_i(\abs{\skel^c_{k-1} \Delta})$ for $k$ in this range, and \cref{thm:CAT(0)-facets-bijection} says that $\Squelta$ is pure if and only if $\Delta$ is pure; therefore, $\Squelta$ is Cohen--Macaulay if and only if $\Delta$ is pure and $\tilde H_i(\abs{\skel^c_{k-1} \Delta}) = 0$ for all $k = 0, \dotsc, d$ and all $i < d - k - 1 = (d-1) - (k-1) - 1$. But since $\dim \Delta = d-1$, this is exactly the condition given by \cref{thm:CM-characterisation} for $\Delta$ to be Cohen--Macaulay.
	\item By \cref{thm:Leray-characterisation}, $\Squelta$ is $r$-Leray if and only if $\tilde H_i(\abs{\skel^c_k \Squelta}) = 0$ for all $i \geq r$ and all $k = -1, \dotsc, d$. Because $\Squelta$ is CAT(0), $\abs{\skel^c_{-1} \Squelta}$ is guaranteed to be contractible, so we only need to check $k = 0, \dotsc, d$. By \cref{thm:CAT(0)-coskeleton-simeq-crossing-coskeleton}, this condition is equivalent to $\tilde H_i(\abs{\skel^c_j \Delta})$ being $0$ for all $i \geq r$ and all $j = -1, \dotsc, d-1$, where $j = k-1$. And this precisely means that $\Delta$ is $r$-Leray.
	\item By \cref{thm:stacked-characterisation}, $\Squelta$ is $s$-stacked if and only if $\tilde H_{d-k-1}(\abs{\skel^c_k \Squelta}) = 0$ for all $k \leq d-s-1$, where $d = \dim \Squelta$. \Cref{thm:CAT(0)-coskeleton-simeq-crossing-coskeleton} tells us this is equivalent to $\tilde H_{d-k-1}(\abs{\skel^c_{k-1} \Delta})$ being $0$ for $k \leq d-s-1$. Equivalently, letting $j = k-1$, we have $\tilde H_{(d-1)-j-1}(\abs{\skel^c_j \Delta}) = 0$ for $j \leq (d-1)-s-1$. Since $\dim \Delta = d-1$, this is equivalent to $\Delta$ being $s$-stacked. \qedhere
\end{enumerate}
\end{proof}

\bibliographystyle{abbrvnat}
\bibliography{coskeletons-refs}

\begin{thebibliography}{20}
\providecommand{\natexlab}[1]{#1}
\providecommand{\url}[1]{\texttt{#1}}
\expandafter\ifx\csname urlstyle\endcsname\relax
  \providecommand{\doi}[1]{doi: #1}\else
  \providecommand{\doi}{doi: \begingroup \urlstyle{rm}\Url}\fi

\bibitem[Adiprasito and Yashfe(2021)]{art:Adiprasito-Yashfe}
K.~Adiprasito and G.~Yashfe.
\newblock The partition complex: an invitation to combinatorial commutative
  algebra.
\newblock In \emph{Surveys in combinatorics 2021}, volume 470 of \emph{London
  Math. Soc. Lecture Note Ser.}, pages 1--41. Cambridge Univ. Press, Cambridge,
  2021.

\bibitem[Alon and Kalai(1985)]{art:Alon-Kalai}
N.~Alon and G.~Kalai.
\newblock A simple proof of the upper bound theorem.
\newblock \emph{European J. Combin.}, 6\penalty0 (3):\penalty0 211--214, 1985.
\newblock ISSN 0195-6698.
\newblock \doi{10.1016/S0195-6698(85)80029-9}.
\newblock URL \url{https://doi.org/10.1016/S0195-6698(85)80029-9}.

\bibitem[Ardila et~al.(2012)Ardila, Owen, and Sullivant]{art:Ardila-geodesics}
F.~Ardila, M.~Owen, and S.~Sullivant.
\newblock Geodesics in {$\rm CAT(0)$} cubical complexes.
\newblock \emph{Adv. in Appl. Math.}, 48\penalty0 (1):\penalty0 142--163, 2012.
\newblock ISSN 0196-8858.
\newblock \doi{10.1016/j.aam.2011.06.004}.
\newblock URL \url{https://doi.org/10.1016/j.aam.2011.06.004}.

\bibitem[Bayer(2018)]{art:Bayer}
M.~M. Bayer.
\newblock Graphs, skeleta and reconstruction of polytopes.
\newblock \emph{Acta Math. Hungar.}, 155\penalty0 (1):\penalty0 61--73, 2018.
\newblock ISSN 0236-5294.
\newblock \doi{10.1007/s10474-018-0804-0}.
\newblock URL \url{https://doi.org/10.1007/s10474-018-0804-0}.

\bibitem[Bj\"{o}rner(1995)]{art:Bjorner}
A.~Bj\"{o}rner.
\newblock Topological methods.
\newblock In \emph{Handbook of combinatorics, {V}ol. 1, 2}, pages 1819--1872.
  Elsevier Sci. B. V., Amsterdam, 1995.

\bibitem[Bj\"{o}rner and Kalai(1988)]{art:Bjorner-Kalai}
A.~Bj\"{o}rner and G.~Kalai.
\newblock An extended {E}uler-{P}oincar\'{e} theorem.
\newblock \emph{Acta Math.}, 161\penalty0 (3-4):\penalty0 279--303, 1988.
\newblock ISSN 0001-5962.
\newblock \doi{10.1007/BF02392300}.
\newblock URL \url{https://doi.org/10.1007/BF02392300}.

\bibitem[Blind and Blind(1998)]{art:Blind-Blind}
G.~Blind and R.~Blind.
\newblock The almost simple cubical polytopes.
\newblock \emph{Discrete Math.}, 184\penalty0 (1-3):\penalty0 25--48, 1998.
\newblock ISSN 0012-365X.
\newblock \doi{10.1016/S0012-365X(97)00159-3}.
\newblock URL \url{https://doi.org/10.1016/S0012-365X(97)00159-3}.

\bibitem[Bridson and Haefliger(1999)]{book:Bridson-Haefliger}
M.~R. Bridson and A.~Haefliger.
\newblock \emph{Metric spaces of non-positive curvature}, volume 319 of
  \emph{Grundlehren der mathematischen Wissenschaften [Fundamental Principles
  of Mathematical Sciences]}.
\newblock Springer-Verlag, Berlin, 1999.
\newblock ISBN 3-540-64324-9.
\newblock \doi{10.1007/978-3-662-12494-9}.
\newblock URL \url{https://doi.org/10.1007/978-3-662-12494-9}.

\bibitem[Gromov(1987)]{art:Gromov}
M.~Gromov.
\newblock Hyperbolic groups.
\newblock In \emph{Essays in group theory}, volume~8 of \emph{Math. Sci. Res.
  Inst. Publ.}, pages 75--263. Springer, New York, 1987.
\newblock \doi{10.1007/978-1-4613-9586-7\_3}.
\newblock URL \url{https://doi.org/10.1007/978-1-4613-9586-7_3}.

\bibitem[Hagen(2014)]{art:Hagen-hyperbolicity}
M.~F. Hagen.
\newblock Weak hyperbolicity of cube complexes and quasi-arboreal groups.
\newblock \emph{J. Topol.}, 7\penalty0 (2):\penalty0 385--418, 2014.
\newblock ISSN 1753-8416.
\newblock \doi{10.1112/jtopol/jtt027}.
\newblock URL \url{https://doi.org/10.1112/jtopol/jtt027}.

\bibitem[Hatcher(2002)]{book:Hatcher}
A.~Hatcher.
\newblock \emph{Algebraic topology}.
\newblock Cambridge University Press, Cambridge, 2002.
\newblock ISBN 0-521-79160-X; 0-521-79540-0.

\bibitem[Kalai(1987)]{art:Kalai-rigidity}
G.~Kalai.
\newblock Rigidity and the lower bound theorem. {I}.
\newblock \emph{Invent. Math.}, 88\penalty0 (1):\penalty0 125--151, 1987.
\newblock ISSN 0020-9910.
\newblock \doi{10.1007/BF01405094}.
\newblock URL \url{https://doi.org/10.1007/BF01405094}.

\bibitem[Kalai(2002)]{art:Kalai-shifting}
G.~Kalai.
\newblock Algebraic shifting.
\newblock In \emph{Computational commutative algebra and combinatorics
  ({O}saka, 1999)}, volume~33 of \emph{Adv. Stud. Pure Math.}, pages 121--163.
  Math. Soc. Japan, Tokyo, 2002.
\newblock \doi{10.2969/aspm/03310121}.
\newblock URL \url{https://doi.org/10.2969/aspm/03310121}.

\bibitem[Kalai and Meshulam(2006)]{art:Kalai-Meshulam}
G.~Kalai and R.~Meshulam.
\newblock Intersections of {L}eray complexes and regularity of monomial ideals.
\newblock \emph{J. Combin. Theory Ser. A}, 113\penalty0 (7):\penalty0
  1586--1592, 2006.
\newblock ISSN 0097-3165.
\newblock \doi{10.1016/j.jcta.2006.01.005}.
\newblock URL \url{https://doi.org/10.1016/j.jcta.2006.01.005}.

\bibitem[Munkres(1984{\natexlab{a}})]{art:Munkres-TRC}
J.~R. Munkres.
\newblock Topological results in combinatorics.
\newblock \emph{Michigan Math. J.}, 31\penalty0 (1):\penalty0 113--128,
  1984{\natexlab{a}}.
\newblock ISSN 0026-2285.
\newblock \doi{10.1307/mmj/1029002969}.
\newblock URL \url{https://doi.org/10.1307/mmj/1029002969}.

\bibitem[Munkres(1984{\natexlab{b}})]{book:Munkres-EAT}
J.~R. Munkres.
\newblock \emph{Elements of algebraic topology}.
\newblock Addison-Wesley Publishing Company, Menlo Park, CA,
  1984{\natexlab{b}}.
\newblock ISBN 0-201-04586-9.

\bibitem[Murai and Nevo(2013)]{art:Murai-Nevo}
S.~Murai and E.~Nevo.
\newblock On the generalized lower bound conjecture for polytopes and spheres.
\newblock \emph{Acta Math.}, 210\penalty0 (1):\penalty0 185--202, 2013.
\newblock ISSN 0001-5962.
\newblock \doi{10.1007/s11511-013-0093-y}.
\newblock URL \url{https://doi.org/10.1007/s11511-013-0093-y}.

\bibitem[Roller(2016)]{art:Roller}
M.~Roller.
\newblock Poc sets, median algebras and group actions, 2016.
\newblock URL \url{https://arxiv.org/abs/1607.07747}.

\bibitem[Rowlands(2020)]{art:Rowlands-CAT0}
R.~Rowlands.
\newblock Relating {CAT(0)} cubical complexes and flag simplicial complexes,
  2020.
\newblock URL \url{https://arxiv.org/abs/2004.12517}.

\bibitem[Sageev(1995)]{art:Sageev}
M.~Sageev.
\newblock Ends of group pairs and non-positively curved cube complexes.
\newblock \emph{Proc. London Math. Soc. (3)}, 71\penalty0 (3):\penalty0
  585--617, 1995.
\newblock ISSN 0024-6115.
\newblock \doi{10.1112/plms/s3-71.3.585}.
\newblock URL \url{https://doi.org/10.1112/plms/s3-71.3.585}.

\end{thebibliography}

\end{document}